\documentclass[10pt]{article}
\textwidth= 5.00in
\textheight= 7.4in
\topmargin = 30pt
\evensidemargin=0pt
\oddsidemargin=55pt
\headsep=17pt
\parskip=.5pt
\parindent=12pt

\usepackage[english]{babel}
\usepackage{amssymb,amsthm,amsmath,amsfonts,mathrsfs}
\usepackage{tikz}
%%%%%%%%%%%%%%%%%%%%%%%%%%%%%%%%%%%%%%%%%%%%%%%%%%%%%%%%%%%%%%%%%%%%%%%%%%%%%%%%%%%%%%%%%%%%%%%%%%%%%%%%%%%%%%%%%%%%%%%%%%%%%%%%%%%%%%%%%%%%%%%%%%%%%%%%%%%%%%%%
\usepackage{bbold}
\usepackage{latexsym}
\usepackage{ifthen, bigstrut}
\usepackage{accents}
\usepackage[english]{babel}
\usepackage{hyperref}
\usepackage{breakurl}
\usepackage{ifthen, bigstrut}

\usepackage{enumerate}

\newtheorem{theorem}{Theorem}

\newtheorem{corollary}[theorem]{Corollary}
\newtheorem{lemma}[theorem]{Lemma}

\newcommand{\rank}{{\rm rank}}

\newcommand{\R}{\mathcal{R}}

\newcommand{\D}{\ensuremath{\mathcal{D}}}
\newcommand{\E}{\ensuremath{\mathcal{E}}}
\newcommand{\U}{\ensuremath{\mathcal{U}}}
\newcommand{\M}{\ensuremath{\mathcal{M}}}

\newcommand{\GL}{{G^\mathcal{L}}}
\newcommand{\GR}{{G^\mathcal{R}}}

\newcommand{\HL}{H^\mathcal{L}}
\newcommand{\HR}{H^\mathcal{R}}
\newcommand{\XL}{X^\mathcal{L}}

\newcommand{\BL}{{B^\mathcal{L}}}
\newcommand{\BR}{{B^\mathcal{R}}}

\renewcommand{\ge}{\geq}
\renewcommand{\le}{\leq}
\newcommand{\bs}[1]{\ensuremath{\boldsymbol{#1}}}
\newcommand{\Conj}[1]{\overline{#1}}

\newcommand{\PM}{perfect murder}
\newcommand{\Pm}{Perfect murder}

\renewcommand{\geq}{\geqslant}
\renewcommand{\leq}{\leqslant}

\theoremstyle{definition}
\newtheorem{definition}[theorem]{Definition}

\newtheorem{observation}[theorem]{Observation}

\newtheorem{example}[theorem]{Example}

%------ for comments:
\usepackage{xcolor}

%-------

%\newcommand{\urban}[1]{\comment{[Urban:} {\comment{#1}}{ \comment]}}

%%%%%%%%%%%%%%%%%%%%%%%%%%%%%%%%%%%%%%%%%%%%%%%%%%%%%%%%%%%%%%%%%%%%%%%%%%%%%%%%%%%%%%%%%%%%%%%%%%%%%%%%%%%%%%%%%%%%%%%%%%%%%%%%%%%%%%%%%%%%%%%%%%%%%%%%%%%%%%%%%%%%%%%%%%%%%%%%

\begin{document}
\begin{center}
{\large {\bf Progress on mis\`ere dead ends: game comparison, canonical form, and conjugate inverses}}
\vspace{1 cm}

URBAN LARSSON\footnote{Supported in part by the Aly Kaufman Fellowship}\\
 {\small {\em  Industrial Engineering and Management, Technion - Israel Institute of Technology, Israel.}\\{\em urban031@gmail.com}}
 
 \vspace{.5 cm}
REBECCA MILLEY \\
{\small {\em Computational Mathematics, Grenfell Campus, Memorial University, Canada.}\\{\em rmilley@grenfell.mun.ca  }}

 \vspace{.5 cm}
%
%\ \\
RICHARD NOWAKOWSKI\footnote{Supported in part by the Natural Sciences and Engineering Research Council of Canada.}\\
{\small {\em Dept of Mathematics and Statistics, Dalhousie University, Canada.}\\{\em rjn@mathstat.dal.ca  }}

 \vspace{.5 cm}
GABRIEL RENAULT\footnote{Supported by the ANR-14-CE25-0006 project of the French National Research Agency.}\\ {\small {\em gabriel.renault@ens-lyon.org}}

 \vspace{.5 cm}
%
%\ \\
CARLOS SANTOS\footnote{This work was partially funded by Funda\c c\~ao para a Ci\^encia e a Tecnologia through the project UID/MAT/04721/2013.}\\
{\small {\em Center for Functional Analysis, Linear Structures and Applications, University of Lisbon, Portugal.}\\{\em cmfsantos@fc.ul.pt  }}\\
\end{center}

\vskip 30pt

\begin{abstract}
This paper addresses several significant gaps in the theory of restricted mis\`ere play (Plambeck, Seigel 2008), primarily in the well-studied universe of dead-ending games, $\mathcal{E}$ (Milley, Renault 2013); if a player run out of moves in $X\in \mathcal E$, then they can never move again in any follower of $X$. A \emph{universe of games} is a class of games which is closed under disjunctive sum, taking options and conjugates. 
We use novel results from absolute combinatorial game theory (Larsson, Nowakowski, Santos 2017) to show that $\mathcal{E}$ and the universe $\mathcal{D}\subset \mathcal{E}$ of dicot games (either both, or none of the players can move) have `options only'  test for comparison of games, and this in turn is used to define unique reduced games (canonical forms) in $\mathcal{E}$. We develop the reductions for $\mathcal{E}$ by extending analogous work for $\mathcal{D}$, in particular by solving the problem of \emph{reversibility through ends} in the larger universe. 
Finally, by using the defined canonical forms in $\mathcal{E}$ and  $\mathcal{D}$, we prove that both of these universes, as well as the subuniverse of impartial games, have the \emph{conjugate property}: every inverse game is obtained by swapping sides of the players.
\end{abstract}
\section{Introduction}
In this paper, we develop theory of mis\`ere-play games \cite{Si} in the dicot and dead-ending settings; see \cite{DorbecRSS} and \cite{MilleyRenault} respectively. The major contribution is the complete solution of game reduction for the monoid of dead-ending games. Some recent results on constructive game comparison \cite{LarssonNS2016A, LNS2017} have been adapted. Moreover, we report, in both dicot and dead-ending settings, that if a game has an inverse, then it is the conjugate. 

\subsection{Universes of games}
Let $\M$ be the class of all short 2-player mis\`ere-play games \cite{Si}. A short game $G\in \M$ is a rooted tree, with finitely many nodes and two kind of move edges, left- or right-slanting. The players Left and Right alternate in moving; Left (Right) plays along left- (right-) slanting edges. The root is the starting position, and the leaves are the terminal positions. 

In mis\`ere-play, a player who cannot move wins. Since mis\`ere-play theory is hard, at least in comparison with the normal-play ditto \cite{onag} (a player who cannot move loses), recent work has focused on studying well suited restrictions of $\M$ (see \cite{MilleyRenault} for a survey). The restrictions relevant for this work are the classes of dicot and dead-ending games. Many well-studied rulesets, including {\sc Domineering} and {\sc Hackenbush}, have the {\em dead-ending} property: informally, once a player runs out of moves in a game, that player will never again have a move in this game. We require some definitions.

In an equivalent recursive representation, a game $G = \left\{\GL\mid \GR\right\}$ is given by its set of Left and Right options, $\GL$ and $\GR$, respectively, where $G^L\in \GL$ ($G^R\in \GR$) is a typical Left (Right) option of $G$. The \emph{followers} of $G$ are the nodes in the game tree, beginning with the game itself, then the options, then the options of the options, etc.

A game is a \emph{dicot} if, for each follower, either none of the players can move, or both players can move. 
 
A game $G$ is a \emph{Left-end} if it has no Left option, and we write $G = \left\{\;\mid \GR \right\}$; a game is a \emph{dead Left-end} if each follower is a Left-end, and analogously for Right. A \emph{dead end} is a dead Left-end or a dead Right-end. 

A game is \emph{dead-ending} if each end-follower is a \emph{dead end}.

The class of all dicot games is $\D$ and the class of all dead-ending games is $\E$. Note that $\D$ is the restriction of $\E$, where each dead end is the empty game. Both classes satisfy some important closure properties. First we require two more central concepts. 

In a disjunctive sum of games, the current player chooses exactly one of the game components and plays in it, while the other components remain the same. That is, the disjunctive sum of the games $G$ and $H$ is $$G + H = \left\{\GL +H, G+\HL\mid \GR+H, G+\HR\right\},$$ where $\GL + H = \left\{G^L+H: G^L\in \GL\right\}$, etc. 

A conjugate of a game $G$ is $\Conj{G} = \left\{\Conj{\GR}\mid\Conj{\GL}\right\}$, where $\Conj{\GL}=\left\{\Conj{G^L}: G^L\in \GL\right\}$, etc. 

\begin{definition} \label{def:universe}
A {\em universe}, $\U$, is a non-empty class of games, which satisfies the following properties:
	\begin{enumerate}
	\item option closure: if $G\in \U$ and $G'$ is an option of $G$ then $G'\in \U$;
	\item disjunctive sum closure: if $G,H\in \U$ then $G+H\in \U$;
	\item conjugate closure: if $G\in \U$ then $\Conj{G}\in \U.$
	\end{enumerate}
\end{definition}

It is easy to see that the sum of two dicots is a dicot, all followers of a dicot are dicots, and the conjugate of a dicot is dicot. The same is true for dead-ending games. Thus, both these classes of games are universes of games.\footnote{A subuniverse of $\mathcal{M}$ can be defined by taking the additive closure of all positions occurring under some rule set (e.g., the universe of  domineering positions); by taking the additive closure of all positions occurring in a given game tree (see \cite{PlambS2008} for examples); or by taking all positions that satisfy some property (e.g., the universe of $\mathcal{I}$ of impartial games, the universe  $\mathcal{D}$ of dicot games, or the universe  $\mathcal{E}$ of dead-ending games).} 

\subsection{Optimal play}

The possible (mis\`ere) \emph{results} of a game are L (Left wins) and R (Right wins); by convention, they are totally ordered with $\rm L > \rm R$.  %\cite{LarssonNS2016A}
 The \emph{left-} and \emph{right-outcome}, in \emph{optimal play} from both players, of a mis\`ere-play game $G$ is 
 
 \[  o_L(G)= \left\{
\begin{array}{ll}
      {\rm L}, & \textrm{ if $\GL =\emptyset$;} \\
	{\max o_R(G^L)}, & \textrm{otherwise,} \\
\end{array} 
\right. \]
\[o_R(G)= \left\{
\begin{array}{ll}
      {\rm R}, & \textrm{ if $\GR =\emptyset$;} \\
	{\min o_L(G^R)}, & \textrm{otherwise,} 
\end{array} 
\right. \]
respectively. 

That is, $o_L(G)=\rm L$ if and only if Left wins $G$, playing first, and so on.

The elements of the cartesian product of the set $\left\{\rm L,\rm R\right\}$ with itself  
partition the possible {\em outcomes} of a game: 

\[  o(G)= \left\{
\begin{array}{ll}
      \mathscr{L}, & \textrm{if } (o_L(G),o_R(G)) = (\rm L,\rm L); \\
      \mathscr{N}, & \textrm{if } (o_L(G),o_R(G)) = (\rm L,\rm R); \\
      \mathscr{P}, & \textrm{if } (o_L(G),o_R(G)) = (\rm R,\rm L); \\
      \mathscr{R}, & \textrm{if } (o_L(G),o_R(G)) = (\rm R,\rm R).
\end{array}\right. \]
The outcomes inherit a partial order from the results with $\mathscr{L}> \mathscr{N} > \mathscr{R}$, $\mathscr{L} > \mathscr{P} > \mathscr{R}$, and where $\mathscr{N}$ and $\mathscr{P}$
are incomparable.

Suppose that Right receives the following offer just when he is about to start a game: for $G\in \E$, an arbitrary game of rank $k>0$, he may, if he wishes, \emph{design} a Left-end $E$ to be played in disjunctive sum with $G$. Note that the outcome of a non-trivial Left-end is Left wins, so does the challenge really make sense? In fact, there is a class of games, with a modest number of nodes, about twice the rank of $G$, which makes the offer very attractive. Let us present the \emph{perfect murder} of dead-ending mis\`ere play, a good tool for any player adventurous enough to take on a challenge, where in general `moves are good', except at the very end. In a sense, Right wants to use a perfect murder to maximize the chance that Left plays last in an arbitrary game $G$ of specified rank. 

\begin{definition}\label{def:murder}
The \emph{\PM} of rank $n$, $M_n\in \mathcal{E}$, is recursively defined by:
\[  M_n= \left\{
\begin{array}{ll}
      \boldsymbol 0, & \textrm{if } n=0; \\
	\left\{\; \mid \boldsymbol 0,M_{n-1}\right\}, & \textrm{if } n>0. \\
\end{array} 
\right. \]
\end{definition}

Thus,  $M_0 = \boldsymbol 0$, $M_1 = \left\{\,\mid \boldsymbol 0\right\}$, $M_2 = \left\{\, \mid \boldsymbol 0,\left\{\, \mid \boldsymbol 0\right\}\right\}$, and so on. Figure \ref{fig:murder} shows perfect murders of rank up to 4. 
Although, one may think of the perfect murders as a kind of `pass tool' for Right, their outcomes in isolation are `Left wins'.
\begin{observation}
The outcomes of the perfect murders are $o(M_0)=\mathscr N$, and $o(M_k)=\mathscr L$, for $k>0$.
\end{observation}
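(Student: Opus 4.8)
The plan is to prove both statements by a short induction on the rank $n$, essentially just unwinding the recursive definitions of $M_n$, $o_L$, and $o_R$, while being careful to respect the mis\`ere convention that a player unable to move wins.

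First I would dispose of the base case $n=0$. Here $M_0=\bs 0$ is the empty game: neither player has a move, so whoever is to move cannot move and hence wins. Formally, $M_0^{\mathcal L}=M_0^{\mathcal R}=\emptyset$, so by definition $o_L(M_0)=\mathrm L$ and $o_R(M_0)=\mathrm R$, which is exactly the pair $(\mathrm L,\mathrm R)$, i.e.\ $o(M_0)=\mathscr N$.

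Next I would record the one structural fact that does all the work: for every $n>0$, the game $M_n=\left\{\;\mid \bs 0, M_{n-1}\right\}$ is a Left-end, since $M_n^{\mathcal L}=\emptyset$. Consequently $o_L(M_n)=\mathrm L$ directly from the definition of $o_L$. It also follows (trivially for $n=0$, and from the previous sentence for $n>0$) that $o_L(M_m)=\mathrm L$ for \emph{every} $m\geq 0$: when $m=0$ this is $o_L(\bs 0)=\mathrm L$ as above, and when $m>0$ it is because $M_m$ is a Left-end. In particular both Right options of $M_n$, namely $\bs 0$ and $M_{n-1}$, have left-outcome $\mathrm L$, so $o_R(M_n)=\min o_L(M_n^R)=\min\{\mathrm L,\mathrm L\}=\mathrm L$. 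Hence $(o_L(M_n),o_R(M_n))=(\mathrm L,\mathrm L)$ and $o(M_n)=\mathscr L$ for all $n>0$.

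I do not expect a genuine obstacle here: the statement is a direct computation, and no earlier machinery (game comparison, canonical form, etc.) is needed. The only point requiring care is the mis\`ere convention in the base case, where the empty game is an $\mathscr N$-position rather than a $\mathscr P$-position; everything else is bookkeeping with the definitions of $o_L$ and $o_R$.
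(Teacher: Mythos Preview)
Your argument is correct: the base case $M_0=\bs 0$ gives $(o_L,o_R)=(\mathrm L,\mathrm R)=\mathscr N$ by the mis\`ere convention, and for $n>0$ the Left-end property forces $o_L(M_n)=\mathrm L$ while both Right options $\bs 0$ and $M_{n-1}$ have left-outcome $\mathrm L$, so $o_R(M_n)=\mathrm L$ as well. The paper states this as an Observation without proof, so there is nothing further to compare; your direct unwinding of the definitions is exactly the intended routine verification.
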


\begin{figure}
\begin{center}

\begin{tikzpicture}
[scale=0.75, auto=center]
\tikzstyle{every node}=[font=\large]

\tikzstyle{blackedge}=[draw,line width=1pt,-,black!100]

\tikzstyle{blueedge}=[draw,line width=1pt,-,blue!100]
\tikzstyle{rededge}=[draw,line width=1pt,-,red!100]

\node[circle, draw, scale=0.5, fill=black, label=$M_0$] (n0) at (0.4,0) {};

\node[circle, draw, scale=0.5, fill=black, label=$M_1$] (n1a) at (3,0) {};
\node[circle, draw, scale=0.5, fill=black ] (n1b) at (3.5,-1) {};
\draw[blackedge] (n1a)--(n1b);

\node[circle, draw, scale=0.5, fill=black,label=$M_2$ ] (n2a) at (6,0) {};
\node[circle, draw,scale=0.5,  fill=black] (n2b) at (6.5,-1) {};
\node[circle, draw, scale=0.5, fill=black] (n2c) at (7,-1) {};
\node[circle, draw,scale=0.5,  fill=black] (n2d) at (8,-2) {};
\draw[blackedge] (n2a)--(n2b);
\draw[blackedge] (n2a)--(n2c)--(n2d);

\node[circle, draw, scale=0.5, fill=black,label=$M_3$ ] (n3) at (9,0) {};
\node[circle, draw, scale=0.5, fill=black ] (n3a1) at (9.5,-1) {};
\node[circle, draw, scale=0.5, fill=black ] (n3a) at (10,-1) {};
\node[circle, draw,scale=0.5,  fill=black] (n3b) at (10.5,-2) {};
\node[circle, draw, scale=0.5, fill=black] (n3c) at (11,-2) {};
\node[circle, draw,scale=0.5,  fill=black] (n3d) at (12,-3) {};
\draw[blackedge] (n3a1)--(n3);
\draw[blackedge] (n3a)--(n3);
\draw[blackedge] (n3a)--(n3b);
\draw[blackedge] (n3a)--(n3c)--(n3d);

\node[circle, draw, scale=0.5, fill=black,label=$M_4$ ] (n4t) at (12,0) {};
\node[circle, draw, scale=0.5, fill=black ] (n4t1) at (12.5,-1) {};

\node[circle, draw, scale=0.5, fill=black ] (n4) at (13,-1) {};
\node[circle, draw, scale=0.5, fill=black ] (n4a1) at (13.5,-2) {};
\node[circle, draw, scale=0.5, fill=black ] (n4a) at (14,-2) {};
\node[circle, draw,scale=0.5,  fill=black] (n4b) at (14.5,-3) {};
\node[circle, draw, scale=0.5, fill=black] (n4c) at (15,-3) {};
\node[circle, draw,scale=0.5,  fill=black] (n4d) at (16,-4) {};

\draw[blackedge] (n4t)--(n4t1);
\draw[blackedge] (n4t)--(n4);
\draw[blackedge] (n4a)--(n4b);
\draw[blackedge] (n4a1)--(n4);
\draw[blackedge] (n4a)--(n4);
\draw[blackedge] (n4a)--(n4b);
\draw[blackedge] (n4a)--(n4c)--(n4d);

\end{tikzpicture}\caption{\Pm\ games of rank 0 to 4.}\label{fig:murder}
\end{center}
\end{figure}

The important property is revealed in Theorem~\ref{thm:pm} below. (The proofs are simpler here than in the case of Guaranteed scoring games \cite{LarssonNPS}, because now we have only 4 outcomes, so instead of Left- and Right-outcomes we can say Left wins, etc., and we make the logic explicit in the below proof.)

\begin{lemma} \label{lem:pm}
For all $n>0$, $M_n\geq_\mathcal{E}M_{n+1}$.
\end{lemma}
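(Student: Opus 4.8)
The plan is to unwind the definition of $\geq_{\mathcal{E}}$ into a statement purely about outcomes and then prove it by a double induction. Recall that $M_n\geq_{\mathcal{E}}M_{n+1}$ means $o(M_n+X)\geq o(M_{n+1}+X)$ for every $X\in\mathcal{E}$, and that an outcome is the pair $(o_L,o_R)$ compared coordinatewise with ${\rm L}>{\rm R}$; so the lemma is equivalent to the two implications
\[
o_L(M_{n+1}+X)={\rm L}\ \Longrightarrow\ o_L(M_n+X)={\rm L}
\quad\text{and}\quad
o_R(M_{n+1}+X)={\rm L}\ \Longrightarrow\ o_R(M_n+X)={\rm L}
\]
holding for all $n\geq 1$ and all $X\in\mathcal{E}$ (every position in sight stays in $\mathcal{E}$ since $\mathcal{E}$ is a universe). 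First I would set up a strong induction on $n$ with an inner induction on the rank of $X$, proving both implications simultaneously; the governing structural fact is that $M_n$ is itself a Right option of $M_{n+1}$, while $M_{n+1}^{\mathcal{R}}=\{\boldsymbol 0,M_n\}$ and $M_n^{\mathcal{R}}=\{\boldsymbol 0,M_{n-1}\}$ (with $M_1^{\mathcal{R}}=\{\boldsymbol 0\}$).

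For the left-outcome implication: since $M_n$ and $M_{n+1}$ are Left-ends, the Left options of $M_n+X$ and of $M_{n+1}+X$ are exactly $M_n+X^L$ and $M_{n+1}+X^L$ for $X^L\in X^{\mathcal{L}}$. If $X$ is a Left-end, both sums are Left-ends and both left-outcomes are ${\rm L}$. Otherwise $o_L(M_{n+1}+X)={\rm L}$ yields a witness $X^L$ with $o_R(M_{n+1}+X^L)={\rm L}$, and applying the right-outcome implication at $(n,X^L)$ — legitimate since $X^L$ has smaller rank — gives $o_R(M_n+X^L)={\rm L}$, hence $o_L(M_n+X)={\rm L}$.

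For the right-outcome implication: suppose $o_R(M_{n+1}+X)={\rm L}$. Because $M_{n+1}^{\mathcal{R}}=\{\boldsymbol 0,M_n\}$, this hypothesis immediately supplies (i) $o_L(X)={\rm L}$ (Right's move $M_{n+1}\to\boldsymbol 0$), (ii) $o_L(M_n+X)={\rm L}$ (Right's move $M_{n+1}\to M_n$), and (iii) $o_L(M_{n+1}+X^R)={\rm L}$ for every $X^R\in X^{\mathcal{R}}$. It remains to check that every Right move from $M_n+X$ loses: a move in $X$ reaches $M_n+X^R$, and the left-outcome implication at $(n,X^R)$ converts (iii) into $o_L(M_n+X^R)={\rm L}$; the move $M_n\to\boldsymbol 0$ reaches $X$, covered by (i); and when $n\geq 2$ the move $M_n\to M_{n-1}$ reaches $M_{n-1}+X$, where the outer induction hypothesis is precisely $M_{n-1}\geq_{\mathcal{E}}M_n$, which with (ii) gives $o_L(M_{n-1}+X)={\rm L}$. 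For $n=1$ this last case is absent, so $n=1$ needs nothing beyond the rank-induction, whose base $X=\boldsymbol 0$ is handled by the Observation that $o(M_k)=\mathscr L$ for $k>0$.

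The hard part is exactly the move $M_n\to M_{n-1}$ in the right-outcome implication: it is what forces the induction on $n$ and makes the lemma rest on its own instance one level down, so the bookkeeping that keeps the two nested inductions well founded must be done carefully — the left-outcome implication may be invoked only on $X^R$ (smaller rank, same $n$), the right-outcome implication only on $X^L$ (smaller rank, same $n$), and the level-$(n-1)$ instance only for $n\geq 2$, never on $X$ itself at level $n$. Everything else is a routine matching of the Right options of $M_{n+1}$ against those of $M_n$. One could alternatively defer the whole comparison to the ``options only'' test for $\mathcal{E}$ established later in the paper, but since this lemma precedes that development the explicit argument above is the natural route and, as the authors note, short because there are only four outcomes.
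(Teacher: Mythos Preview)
Your proof is correct and follows essentially the same approach as the paper's: a double induction on $n$ and on the rank of $X$, matching the Right options $\{\boldsymbol 0,M_{n-1}\}$ of $M_n$ against $\{\boldsymbol 0,M_n\}$ of $M_{n+1}$ and using the level-$(n-1)$ instance of the lemma for the shift $M_{n-1}\leftrightarrow M_n$. The only cosmetic difference is that the paper argues the contrapositive (if Right wins $M_n+X$ then Right wins $M_{n+1}+X$), whereas you argue the direct form (if Left wins $M_{n+1}+X$ then Left wins $M_n+X$); with only two results these are equivalent, as the paper itself remarks in the proof of the next theorem.
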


\begin{proof}
We prove that $o(M_n+X)\geq o(M_{n+1}+X)$, for all $n>0$ and for all $X \in \mathcal{E}$. 

Consider first $X=\bs 0$, and observe that $o(M_n) = \mathscr{L} = o(M_{n+1})$, for all $n > 0$. 

For $X\ne \bs 0$, suppose that $o(M_n+X) \ne \mathscr{L}$ (for otherwise we are done). 

The first case is that $o_L(M_n+X) = \rm R$, i.e. Right wins $M_n+X$ going second. Left playing first can only move in $X$, and if she loses on $M_n+X^L$, then by induction she loses on $M_{n+1}+X^L$ (if she does not have a move, then we are done as in the base case). That is, $o_L(M_{n+1}+X) = \rm R$. 

The second case is that $o_R(M_n+X) = \rm R$, i.e. Right wins $M_n + X$ playing first.

(1) If $\boldsymbol 0+X$ is a winning move for Right in $M_n+X$, then the same move is available from $M_{n+1}+X$.

(2) If  $M_{n-1}+X$ is a winning move for Right in $M_n+X$ (which is to $\boldsymbol 0+X$ if $n = 1$), then by induction, Right wins moving from $M_{n+1}+X$ to $M_{n}+X$.  

(3) If $M_{n}+X^R$ is a winning move, then by induction, Right wins moving from $M_{n+1}+X$ to $M_{n+1}+X^R$.
\end{proof}

\begin{theorem} \label{thm:pm}
If  $G$ is a Left-end with $\rank(G) = k > 0$, then $G\geq_\mathcal{E}M_n$, for all $n\geq k$.
\end{theorem}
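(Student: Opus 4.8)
The plan is to prove first that every Left-end $G\in\mathcal{E}$ with $\rank(G)=k\geq 1$ satisfies $G\geq_\mathcal{E} M_k$, and then to deduce $G\geq_\mathcal{E} M_n$ for all $n\geq k$ from Lemma~\ref{lem:pm} together with transitivity of $\geq_\mathcal{E}$. Unwinding the definition, $G\geq_\mathcal{E} M_k$ asserts $o(G+X)\geq o(M_k+X)$ for every $X\in\mathcal{E}$; since the outcome order is the product of the result order on the two coordinates, this is equivalent to two implications: (i) if $o_L(G+X)={\rm R}$ then $o_L(M_k+X)={\rm R}$; and (ii) if $o_R(G+X)={\rm R}$ then $o_R(M_k+X)={\rm R}$. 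I would argue by induction on $k$, and for fixed $G$ prove (i) and (ii) simultaneously by a secondary induction on the game tree of $X$, so that $X^L$ and $X^R$ count as smaller.

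The structural observation driving everything is that a Left-end $G\in\mathcal{E}$ with $\rank(G)>0$ is automatically a \emph{dead} Left-end: $G$ is an end-follower of itself, hence a dead end by the dead-ending hypothesis, and since it has a Right option it is not a dead Right-end, so it is a dead Left-end. Consequently every follower of $G$ — in particular every $G^R$ — is again a Left-end, lies in $\mathcal{E}$, and has strictly smaller rank.

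For (i): if $o_L(G+X)={\rm R}$, then $X$ cannot be a Left-end (otherwise $G+X$ would be a Left-end and $o_L(G+X)={\rm L}$), so Left's moves in $G+X$ are exactly the moves $G+X\to G+X^L$, each of which loses for her, i.e.\ $o_R(G+X^L)={\rm R}$; by the secondary induction — statement (ii) for $X^L$ — each $o_R(M_k+X^L)={\rm R}$; as these are likewise all of Left's moves in $M_k+X$, we get $o_L(M_k+X)={\rm R}$. For (ii): suppose $o_R(G+X)={\rm R}$; Right does have a move in $G+X$ (since $G$, having positive rank, is not a Right-end), and some move is winning. If it is $G+X\to G+X^R$ with $o_L(G+X^R)={\rm R}$, then statement (i) for $X^R$ gives $o_L(M_k+X^R)={\rm R}$, and Right wins $M_k+X$ by the analogous move. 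If instead it is $G+X\to G^R+X$ with $o_L(G^R+X)={\rm R}$, set $j=\rank(G^R)\leq k-1$: when $j=0$ we have $G^R=\boldsymbol 0$, so $o_L(X)={\rm R}$ and Right wins $M_k+X$ by moving $M_k\to\boldsymbol 0$ (reaching $X$); when $j\geq 1$, the primary induction hypothesis gives $G^R\geq_\mathcal{E} M_j$, while Lemma~\ref{lem:pm} and transitivity give $M_j\geq_\mathcal{E} M_{k-1}$ (here $1\leq j\leq k-1$), so $o_L(M_{k-1}+X)\leq o_L(G^R+X)={\rm R}$, and since $M_k=\{\,\mid\boldsymbol 0,M_{k-1}\}$ Right wins $M_k+X$ by moving $M_k\to M_{k-1}$.

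The base case $k=1$ needs nothing extra: then every $G^R$ has rank $0$, so only the subcase $j=0$ of (ii) can occur and the primary hypothesis is never invoked (indeed $G=M_1$ in this case). The single genuine difficulty is the last case of (ii), where Right's winning reply lies inside $G$ rather than inside $X$: this is exactly where the dead-ending hypothesis is indispensable — it is what makes $G^R$ a Left-end of smaller rank, so that the induction on rank can be applied to it — and where Lemma~\ref{lem:pm} is used to turn a bound on $M_j$ into one on $M_{k-1}$. Everything else is routine manipulation of the recursions for $o_L$ and $o_R$.
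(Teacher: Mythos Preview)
Your proof is correct and follows essentially the same approach as the paper's: reduce to $G\geq_\mathcal{E} M_k$ via Lemma~\ref{lem:pm}, then establish the two outcome implications by a double induction (on the rank of $G$ and on $X$), splitting the case where Right's winning move is in $G$ according to whether $G^R=\boldsymbol 0$ or not. Your write-up is in fact more explicit than the paper's about the induction structure and about why $G^R$ is again a Left-end (the dead-ending hypothesis), but the argument is the same.
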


\begin{proof}
Let $G\in \mathcal{E}$ be a fixed Left-end of rank $k > 0$. By Lemma \ref{lem:pm}, it suffices to show $G\geq_\mathcal{E}M_k$.

We must prove, for all $X\in \E$, $o(G+X)\geq  o(M_k+X)$. 
For $X = \boldsymbol 0$, since $G$ is a non-zero Left-end, then $o(G) = \mathscr{L} = o(M_k)$.  For $\rank(X) > 0$, we prove that
\begin{itemize}
\item[(i)] if $o_L(M_k+X)=\rm L$, then $o_L(G+X)=\rm L$; 
\item[(ii)]  if $o_R(M_k+X)=\rm L$, then $o_R(G+X)=\rm L$; 
\item[(iii)] if $o_L(G+X)=\rm R$, then $o_L(M_k+X)=\rm R$; 
\item[(iv)]  if $o_R(G+X)=\rm R$, then $o_R(M_k+X)=\rm R$. 
\end{itemize}
Notice that, since there are only 2 results, then (i) and (iii) is logically equivalent, and so is (ii) and (iv). Item (i) is true by induction on the rank of $X$, since there are no Left options in $M_k$ and $G$. 

For (iv), there are three possibilities. 

(1) There is a Right option $G^R = \boldsymbol 0$ such that $o_L(\boldsymbol 0 + X) = \rm R$. By construction $\boldsymbol 0\in M_k^\mathcal R$, so $o_R(M_k+X) \le o_L(\boldsymbol 0 + X) = \rm R$.

(2) If $G^R+X$ is a winning move, with $G^R\ne \bs 0$, then, since $\rank (G^R)\le k-1$, by induction $G^R\geq_\mathcal{E}M_{k-1}$, so $M_{k-1}+X$ is a  winning move for Right in $M_{k}+X$.

(3) If $G+X^R$ is a winning move, then by induction, so is $M_k+X^R$. 
\end{proof}

The \emph{strong Left outcome} is essentially the same as the outcome of $G$ when Right is allowed to `pass' at every turn.

\begin{definition}\label{def:so}
The \emph{strong Left-outcome} and  \emph{strong Right-outcome} of $G\in \E$ is  
\[ \begin{array}{ccc}
 \hat{o}_L(G)=\underbrace{\min}_{\emph{Left-end } X } \left\{o_L(G+X)\right\},  \\\\
 \hat{o}_R(G)=\underbrace{\max}_{\emph{Right-end } Y } \left\{o_R(G+Y)\right\},
\end{array} \]
respectively. 
\end{definition}

This definition can be simplified, by using the perfect murders.% Due to Theorem~\ref{thm:pm}.

\begin{theorem}\label{thm:so}
Let $k=\rank (G)$, with $G\in \E$. Then 
 $\hat{o}_L(G) = \min\left\{o_L(G), o_L(G + M_{k-1})\right\}$ and 
$ \hat{o}_R(G) = \max\left\{o_R(G), o_R(G + \Conj{M_{k-1}})\right\}$. 
\end{theorem}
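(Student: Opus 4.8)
The plan is to prove the first identity and obtain the second by conjugate symmetry. For the latter, recall the elementary fact that $o_L(\Conj H)$ equals $o_R(H)$ with the results $\rm L$ and $\rm R$ interchanged, and symmetrically (immediate from the definitions of $o_L,o_R$ and of $\Conj{\,\cdot\,}$); since $\Conj{\,\cdot\,}$ is an involution which interchanges Left-ends and Right-ends, preserves rank, and distributes over sums, and since $\Conj{M_{k-1}}$ is precisely the conjugate perfect murder (a dead Right-end of rank $k-1$), applying the first identity to $\Conj G$ and rewriting yields $\hat o_R(G) = \max\{o_R(G), o_R(G+\Conj{M_{k-1}})\}$. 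So from now on only $\hat o_L$ is at issue; I treat $k\ge 1$, the case $k=0$ ($G=\bs 0$) being immediate since $\hat o_L(\bs 0)=o_L(\bs 0)=\mathscr L$. One inequality is then free: $\bs 0$ and $M_{k-1}$ are Left-ends in $\E$ (perfect murders are dead Left-ends), so they are among the ends appearing in the minimum of Definition~\ref{def:so} and realise the two quantities on the right, whence $\hat o_L(G)\le\min\{o_L(G), o_L(G+M_{k-1})\}$.

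For the reverse inequality I must show $o_L(G+X)\ge\min\{o_L(G), o_L(G+M_{k-1})\}$ for every Left-end $X\in\E$, which I would split according to $\rank(X)$. If $\rank(X)=0$ then $X=\bs 0$ and $o_L(G+X)=o_L(G)$. If $1\le\rank(X)\le k-1$ then Theorem~\ref{thm:pm} gives $X\ge_\E M_{\rank(X)}$ and Lemma~\ref{lem:pm}, iterated, gives $M_{\rank(X)}\ge_\E M_{k-1}$, so $G+X\ge_\E G+M_{k-1}$ and in particular $o_L(G+X)\ge o_L(G+M_{k-1})$. If $\rank(X)=j\ge k$ then Theorem~\ref{thm:pm} still gives $X\ge_\E M_j$, hence $o_L(G+X)\ge o_L(G+M_j)$, and everything is reduced to the following key claim:
\[ o_L(G+M_j)\ \ge\ o_L(G+M_{k-1})\qquad\text{for all }j\ge k-1,\text{ where }k=\rank(G). \]
Note that this runs opposite to Lemma~\ref{lem:pm}, which only yields $o_L(G+M_{k-1})\ge o_L(G+M_j)$ (for $j\ge k-1\ge 1$); together they say the outcome stabilises once Right has $k-1$ free moves, and it is this stabilisation that is the new content.

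I would prove the key claim by a strategy-lifting argument, and this is where the real obstacle lies. Supposing $o_L(G+M_{k-1})=\rm L$, fix a winning strategy for Left moving first in $G+M_{k-1}$ and replay it in $G+M_j$: Left copies her moves in the $G$-component and never touches the murder, maintaining the invariant that the murder index on the $M_j$-side is always at least that on the $M_{k-1}$-side. Each Right option in $G+M_j$ must be answerable: a Right move inside $G$ is mirrored verbatim; a Right move in the murder, $M_{j'}\to M_{j'-1}$ or $M_{j'}\to\bs 0$, is simulated on the $M_{k-1}$-side by whichever legal move (decrement, or collapse to $\bs 0$ when its murder is already too small) preserves the invariant, after which Left resumes the strategy. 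Since $\rank(G)=k$, the $G$-component is exhausted within $k$ moves no matter how Right stalls, and thereafter the position is a dead Left-end on both sides, so Left is never to move there and wins. The delicate part --- the only genuinely hard step --- is the bookkeeping of this simulation: handling configurations where the $M_{k-1}$-side murder has run out while the $M_j$-side has not, checking that the lifted play really does leave Right to move at termination, and packaging the informal slogan ``a rank-$k$ game ends in $k$ moves, so $k-1$ free Right moves is already the worst case'' as a rigorous induction (on $j-k+1$, with $\rank(G)$ as an inner parameter).
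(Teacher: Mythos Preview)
Your proposal is correct, and it is considerably more detailed than the paper's own proof, which consists of the single sentence ``Combine Theorem~\ref{thm:pm} with Definition~\ref{def:so}.'' You have rightly spotted that this one-liner glosses over a genuine step. Theorem~\ref{thm:pm} only asserts that a Left-end $X$ of rank $k'>0$ satisfies $X\geq_\E M_n$ for $n\geq k'$; so for $1\le\rank(X)\le k-1$ one gets $X\geq_\E M_{k-1}$ directly (taking $n=k-1\ge k'$), but for $\rank(X)\ge k$ one only obtains $o_L(G+X)\ge o_L(G+M_j)$ with $j\ge k$, and Lemma~\ref{lem:pm} points the wrong way to reach $M_{k-1}$. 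Your ``key claim'' --- the stabilisation $o_L(G+M_j)\ge o_L(G+M_{k-1})$ for $j\ge k-1$ --- is exactly what bridges this gap, and your strategy-lifting plan is sound.

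The ``delicate part'' you flag in fact resolves cleanly. Track the simulation: the imagined murder can reach $\bs 0$ with the real murder still alive only via $k-1$ successive decrements (a collapse would zero both), and a move-count then forces $r_G=0$ and $\ell=k$, so the $G$-component is already $\bs 0$. Thus the imagined position is $\bs 0$ on Right's turn --- a Right win --- which a winning strategy $\sigma$ for Left cannot reach against any Right play. Hence under $\sigma$ Left must have run out of moves in $G$ no later than her $(k{-}1)$st turn, and at that moment Left is equally move-less in the real game (the real murder is still a Left-end) and wins there too. So the simulation never actually breaks, and your lifting goes through without the double induction you anticipate. In short, your proposal is a genuine completion of what the paper leaves implicit.
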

\begin{proof}
Combine Theorem~\ref{thm:pm} with Definition~\ref{def:so}.
\end{proof}

\begin{example}
If $G=\left\{*\mid \boldsymbol 0\right\}$ then $o_L(G)= \rm L$, because $*$ is a winning move. But $\hat{o}_L(G)=\rm R$, because Right wins if Left plays first on  $G+M_1 = G-\boldsymbol 1$, and note that $\rank (G)=2$. If, on the other hand, $G=\left\{-\boldsymbol 1\mid \boldsymbol 0\right\}$, then both $o_L(G)={L}$ and $\hat{o}_L(G)={L}$. Right cannot win the game $G + X$, for any Left-end $X$, if Left starts, because Left has no more moves.
\end{example}

We pair the strong Left- and Right-outcomes of a game to give the \emph{strong outcome} of the game. 

\begin{definition}\label{def:stronginequality}
The \emph{strong outcome} of $G\in \E$ is
\[\hat{o}(G)
= \left\{
\begin{array}{ll}
      \mathscr{L}, & \textrm{if } (\hat{o}_L(G),\hat{o}_R(G)) = (\rm L,\rm L); \\
      \mathscr{N}, & \textrm{if } (\hat{o}_L(G),\hat{o}_R(G)) = (\rm L,\rm R); \\
      \mathscr{P}, & \textrm{if } (\hat{o}_L(G),\hat{o}_R(G)) = (\rm R,\rm L); \\
      \mathscr{R}, & \textrm{if } (\hat{o}_L(G),\hat{o}_R(G)) = (\rm R,\rm R).
\end{array}\right. \]
\end{definition}

\begin{observation}
Let $E$ be a non-zero dead Left-end.  Then $\hat{o}(E) = \mathscr{L}$, because if Left goes first then she has no move and wins, and if Right goes first then the position is still a Left-end (because the original position was a dead end) and so Left also wins going second.
\end{observation}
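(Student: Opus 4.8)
The plan is simply to unwind Definition~\ref{def:stronginequality}: it suffices to establish $\hat{o}_L(E) = \mathrm{L}$ and $\hat{o}_R(E) = \mathrm{L}$, for then $(\hat{o}_L(E),\hat{o}_R(E)) = (\mathrm{L},\mathrm{L})$ gives $\hat{o}(E) = \mathscr{L}$ directly. Both computations are short, and the only thing to be careful about is the closure behaviour of dead ends under taking options.

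First I would treat $\hat{o}_L$. For any Left-end $X$, the disjunctive sum $E + X$ has no Left option, since neither summand does; hence $E+X$ is again a Left-end, and $o_L(E+X) = \mathrm{L}$ straight from the definition of $o_L$ (the player to move has no move, and in misère play wins). Since this holds for every Left-end $X$, and since the index set of the minimum in Definition~\ref{def:so} is nonempty (the empty game $\bs 0$ is a Left-end), we get $\hat{o}_L(E) = \min_{\text{Left-end } X}\{o_L(E+X)\} = \mathrm{L}$.

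Next I would treat $\hat{o}_R$. Here I only need one Right-end $Y$ witnessing that the maximum is $\mathrm{L}$, so I would take $Y = \bs 0$ (equivalently, apply Theorem~\ref{thm:so} and note that already the first term $o_R(E)$ does the job). It then remains to check $o_R(E) = \mathrm{L}$. Since $E$ is a dead Left-end, every Right option $E^R$ is a follower of $E$, hence itself a Left-end, so $o_L(E^R) = \mathrm{L}$ by the same no-move-wins argument as above. Because $E$ is non-zero, $E^{\mathcal R} \neq \emptyset$, so $o_R(E) = \min_{E^R} o_L(E^R) = \mathrm{L}$. Therefore $\hat{o}_R(E) \geq o_R(E+\bs 0) = \mathrm{L}$, and since $\mathrm{L}$ is the greatest result, $\hat{o}_R(E) = \mathrm{L}$.

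Combining the two gives $\hat{o}(E) = \mathscr{L}$. There is essentially no obstacle: the only points needing a moment's thought are that ``dead Left-end'' passes to Right options (immediate, since every follower of a dead end is a dead end), that a non-zero Left-end really does have at least one Right option so the minimum defining $o_R(E)$ is over a nonempty set, and that $\bs 0$ qualifies as both a Left-end and a Right-end so the outer $\min$ and $\max$ in Definition~\ref{def:so} are well-defined.
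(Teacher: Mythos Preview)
Your proof is correct and follows essentially the same approach as the paper's inline justification: the paper's ``if Left goes first then she has no move and wins'' is your $\hat{o}_L$ computation (any Left-end plus $E$ is still a Left-end), and ``if Right goes first then the position is still a Left-end'' is your $\hat{o}_R$ computation via $Y=\bs 0$ and the dead-end closure property. You have simply made explicit the details the paper leaves implicit, including the nonemptiness of $E^{\mathcal R}$ and of the index sets in Definition~\ref{def:so}.
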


\section{Game comparison}
As described earlier, general mis\`ere analysis is in general hard. %see , Plamb2008} for recent surveys. 
A breakthrough in the study of mis\`ere impartial games occurred when Plambeck and Siegel (\cite{Plamb2005, PlambS2008}) suggested weakened  equality and inequality relations in order to compare games in restricted classes of games. Later Milley, Renault et. al. generalized this work to the setting of partizan games; see \cite{MilleyRenault2017} for a survey. 

For example, we can talk about two dicot games being equivalent \emph{modulo dicots}, even if they are not equivalent in $\mathcal{M}$. 

In most cases, the tested games will also belong to the universe in which they are being compared, but this is not necessary.

\begin{definition}\label{def:eq} %\cite{PlambS2008}
For a universe $\mathcal{U}\in\M$ and games $G,H\in\M $, then $G \geq_{\U} H  \textrm{ if } o(G+X)\geq o(H+X) \textrm{ for all games } X \in \mathcal{U}.$
The games are equivalent modulo $\U$, i.e. $G\equiv_\U H$, if $G \geq_{\U} H$ and $H \geq_{\U} G$.
\end{definition}
The word {\em indistinguishable} is sometimes used instead of {\em equivalent}, and if $G\not \equiv_{\U} H$  then $G$ and $H$ are said to be {\em distinguishable} modulo $\mathcal{U}$.  In this case there exists a game $X\in \mathcal{U}$ such that $o(G+X)\not = o(H+X)$, and we say that $X$ {\em distinguishes} $G$ and $H$. 
Notice that $G \equiv_{\U}  H$ implies $G \equiv_{\mathcal{V}} H$ for any subuniverse $\mathcal{V} \subseteq \mathcal{U}$, but in general, games can be equivalent in the smaller universe and distinguishable in the larger. 

It was shown that in $\mathcal{M}$, if $G\ne \bs 0$ then $G+\Conj{G} \not\equiv \bs 0$ \cite{MesdaO2007}, and moreover it is known that here $\bs 0$ is the only game equivalent to $\bs 0$. In $\mathcal{D}$ and $\mathcal{E}$, we can easily find games $G, H\not\equiv \bs 0$, such that $G+H\equiv \bs 0$, and thus the equivalence class of all neutral elements is `large'. We say $G$ is \textit{invertible} modulo $\U$ if there exists $H\in \U$ such that $G+H \equiv_{\U} 0$. 

Let us define the mis\`ere `integers'; $\bs 0$ is the empty game, $\bs 1=\{\bs 0\mid \; \}$,  $-\bs 1=\{\; \mid \bs 0\}$, $\bs 2=\{\bs 1\mid \; \}$,  $-\bs 2=\{\; \mid \bs 1\}$, and so on. In $\M$, these numbers do not have nice arithmetic properties. It is true that $\bs 1 +\bs 1 \equiv \bs 2$, but, for example $\bs 1 - \bs 1\not \equiv \bs 0$. We benefit by restricting the universe to the dead-ending games; namely, all integers belong to the class (they do not belong in dicot), and moreover, we have the standard arithmetics, so that for example $\bs 3 - \bs 7\equiv_\E \bs -\bs 4$. This follows because in more generality it was shown \cite{MilleyRenault} that all dead ends are invertible, and their inverses are the conjugates. 

Observe, if $G = \left\{\bs 0,* \mid \bs 0,*\right\}$, then the conjugate is $\Conj{G} = G$ and $G+\Conj{G}\not \equiv \bs0$ in any mis\`ere universe, because $o(G+\Conj{G}) = \mathscr P\ne  \mathscr N = o(\bs 0) $. One of our main results (Theorem \ref{Conjugate}, Section~\ref{sec:conjugate}) shows that in both $\mathcal{D}$ and $\mathcal{E}$  all inverses are the conjugates, which we call the `conjugate property'. 

We use the following result, adapted from \cite{LarssonNPS} (and with identical proof), to prove this property.

\begin{lemma}[\cite{LarssonNPS} A Cancellative Property]\label{lem:dis}
For any universe $\U\subseteq\M$ and any games $G, H, J \in \U$, if $G\geq_\mathcal{U} H$ then
$G+J\geq_\mathcal{U} H+J$. Moreover, the \emph{only if} directions holds, if $J$ is invertible. 
\end{lemma}

The final lemma, due to \cite{MesdaO2007}, is analogous to the `hand-tying' principle from normal play. The only difference is that in mis\`ere play, Left must have at least one option before additional options are guaranteed to give a position the same or better for Left; this is in analogy with the greediness principle in guaranteed scoring play \cite{LarssonNPS}. 

\begin{lemma}[\cite{MesdaO2007} A Mis\`ere Hand-tying Principle.]\label{lem:greediness}
Let $G\in\mathcal{M}$. If  $|\GL|\ge 1$  then for any $A\in\mathcal{M}$,
$ \left\{ \GL\cup \left\{A\right\}\mid {\GR}\right\}\geq_\mathcal{U} G$.
\end{lemma}

\subsection{Subordinate Game Comparison}\label{sec:gamecomparison} 
 Whereas in normal-play $G\geq H$ if and only if Left wins $G-H$ playing second, in mis\`ere-play, we do not generally have this shortcut. In general, to show $G\geq_\U H$, we must compare $o(G+X)$ and $o(H+X)$ for {\em all} games $X$ in $\U$. In this section, we develop a subordinate game comparison, which does not require consideration of all possible $X$, for the universes $\mathcal{D}$ and $\mathcal{E}$. Our construction is a specific application of absolute CGT \cite{LarssonNS2016A}; this is a major tool (the proof is highly non-trivial and uses an adjoint operation, down-linked relation, and other concepts from Siegel, Ettinger et al. \cite{Siege, Ettinger}), and we will use it to prove our main results in Sections \ref{sec:reductions} and \ref{sec:conjugate}. We require one more definition.
 
\begin{definition}
A universe $\U\subset \M$ is \emph{absolute} if it is parental and dense. A universe $\U$ is \emph{parental}, if for any non-empty sets $S,T\subset \U$, then $\{S\mid T\}\in \U$, and it is \emph{dense}, if, for any outcome $x$, and any game $G$, then there is a game $H$ such that $o(G+H)=x$. 
\end{definition}

\begin{theorem}[\cite{LarssonNS2016A} Common Normal Part]\label{thm:commonnormalpart}
Consider an absolute universe $\U\subset \M$, and let $G,H\in \mathcal{U}$. If $G\geq_\mathcal{U} H$, then
\begin{enumerate}
  \item  $\forall H^L\in H^{\mathcal{L}}, \mbox{either }
  \exists G^L\in G^{\mathcal{L}}\!:G^L\geq_\mathcal{U} H^L\,\,or\,\,\exists H^{LR}\in H^{L\mathcal{R}}\!:G\geq_\mathcal{U} H^{LR};$
  \item  $\forall G^R\in G^{\mathcal{R}}, \mbox{either }\exists H^R\in H^{\mathcal{R}}\!:G^R\geq_\mathcal{U} H^R\,\,or\,\,\exists G^{RL}\in G^{R\mathcal{L}}\!:G^{RL}\geq_\mathcal{U} H.$
\end{enumerate}
\end{theorem}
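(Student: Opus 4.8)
The plan is to prove the contrapositive, and to cut the work in half by exploiting the conjugate symmetry of the ordering. Since conjugation reverses the $\mathscr{L}$--$\mathscr{R}$ order on outcomes (and fixes $\mathscr{N},\mathscr{P}$) and is a bijection on an absolute universe $\U$, one checks directly that $G \geq_\U H$ if and only if $\Conj{H} \geq_\U \Conj{G}$, and that conjugation carries statement~2 for the pair $(G,H)$ into statement~1 for the pair $(\Conj{H},\Conj{G})$. So it suffices to establish statement~1. For that I would fix a Left option $H^L \in \HL$ and assume, toward a contradiction, that \textbf{(a)} no $G^L \in \GL$ satisfies $G^L \geq_\U H^L$, and \textbf{(b)} no $H^{LR} \in H^{L\mathcal{R}}$ satisfies $G \geq_\U H^{LR}$. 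The goal is to manufacture a single game $X \in \U$ witnessing $G \not\geq_\U H$.

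The target outcome pattern is: choose $X$ so that Left's move to $H^L + X$ wins even with Right to move next, i.e.\ $o_R(H^L + X) = \rm L$ (which forces $o_L(H+X) = \rm L$), while in $G+X$ every Left option loses with Right to move next, i.e.\ $o_R(G^L + X) = \rm R$ for every $G^L \in \GL$ and $o_R(G + X^L) = \rm R$ for every Left option $X^L$ of $X$ (which forces $o_L(G+X) = \rm R$); then $o_L(H+X) = \rm L > \rm R = o_L(G+X)$, contradicting $G \geq_\U H$. The raw material comes from the hypotheses: \textbf{(a)} gives, for each $G^L$, a distinguishing game $A_{G^L} \in \U$ on which $G^L$ is strictly worse for Left than $H^L$; \textbf{(b)} gives, for each $H^{LR}$, a distinguishing game $B_{H^{LR}} \in \U$ on which $G$ is strictly worse for Left than $H^{LR}$. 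The job is to glue all of these into one $X$ whose Left options ``route'' Right, after a Left move inside the $G$-component, into the appropriate $A_{G^L}$-part, and whose Right responses inside the $H^L$-component route into the appropriate $B_{H^{LR}}$-part, while every other line within $X$ stays inert.

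Making a summand inert except along one intended line is precisely the role of the \emph{adjoint} operation (relative to $\U$), and assembling the routed game while staying inside the universe is precisely what \emph{parental} closure is for: I would build $X$ as $\{\,S \mid T\,\}$ whose option sets $S,T$ are made from (conjugate) adjoints of the $G^L$'s and $H^{LR}$'s summed with the corresponding distinguishers, so that each threat Right or Left can raise is answered by a forced descent into a position already known to favour the correct player, and then invoke \emph{density} of $\U$ to pad $X$ so that the two target results $o_R(H^L+X) = \rm L$ and $o_R(G+X) = \rm R$ are hit exactly. The bookkeeping should be organised by induction on the combined rank (birthday) of the games involved, with the load-bearing auxiliary fact being a \emph{down-linked} characterisation: failure of $P \geq_\U Q$ is equivalent to the existence of a suitable $\U$-witness linking a follower of $P$ to $Q$ or a follower of $Q$, and such a witness can always be realised using adjoints inside an absolute universe.

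The main obstacle is exactly the last point: verifying that the glued $X$ really realises the claimed outcomes, i.e.\ that the routing gadget does not accidentally hand Left an unintended winning escape in $G+X$ nor cost her the win in $H+X$, and that it stays in $\U$. This is where the whole weight of absolute CGT sits — the adjoint/down-linked machinery of Ettinger and Siegel adapted to the universe $\U$ — and it is the reason the parental and dense hypotheses are indispensable: parental closure keeps $X$ in $\U$, and density lets one fine-tune $X$'s outcome to the exact value needed. Once statement~1 is proved in this way, statement~2 follows immediately by applying it to $(\Conj{H},\Conj{G})$ and translating back through conjugation.
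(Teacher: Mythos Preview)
The paper does not actually prove this theorem: it is imported from \cite{LarssonNS2016A}, and the text surrounding it says only that the proof ``is highly non-trivial and uses an adjoint operation, down-linked relation, and other concepts from Siegel, Ettinger et al.''\ Your outline is entirely consistent with that description --- the conjugate-symmetry reduction of item~2 to item~1 is correct, the contrapositive set-up is the standard one, and you have correctly identified adjoints, the down-linked relation, parental closure, and density as the load-bearing ingredients --- so to the extent there is anything to compare, your approach matches what the paper points to.

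That said, what you have written is a plan rather than a proof. The construction of $X$ is never actually specified (you say $X=\{S\mid T\}$ built from adjoints and distinguishers, but $S$ and $T$ are not written down), the down-linked characterisation is invoked as a black box, and you explicitly flag the verification that the glued $X$ realises the intended outcomes as ``the main obstacle'' without discharging it. Since the paper itself defers the entire argument to the cited source, there is nothing further against which to judge your sketch; but if the intent was to supply a self-contained proof, the substantive work --- defining the $\U$-adjoint, proving the down-linked lemma by induction on rank, and checking the outcome routing in $G+X$ versus $H+X$ --- still remains.
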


Note that the Common Normal Part implies the existence of an order-preserving map of mis\`ere-play into normal-play. That is, consider a universe of mis\`ere games, $\U\subseteq\M$. Then $$G\geq_\U H\Rightarrow G\geq_\text{normal-play}H.$$ 
To gain some intuition, we use {\sc Chess} terminology, for a large positive integer $n$, then the game $\left\{\bs{n}\mid-\bs{n}\right\}$ acts like a ``large zugzwang''; players do not want to be the first to play in this position. Therefore, in both $G+\left\{\bs{n}\mid-\bs{n}\right\}$ and $H+\left\{\bs{n}\mid-\bs{n}\right\}$, for a large $n$, players should play $G$ and $H$ with a ``normal-play strategy'', trying to get the last move and force the opponent to play first on the zugzwang part. Thus, if we don't have $G\geq_\text{normal-play}H$, we cannot have $G\geq_\mathcal{E}H$.

We now prove the main results of this subsection, which will allow us to establish inequality of games in $\mathcal{D}$ and $\mathcal{E}$ by studying only the options of the two games. The proof requires the use of Theorem~\ref{thm:commonnormalpart}, and so observe that both our universes are absolute (see also \cite{LarssonNPS} for a justification of this fact).

In the proof below, we use  inequalities such as  $o_L(G+X)\ge o_R(G^L+X)$, which follow immediately from the definition of left and right outcome: in this case, Left's outlook playing first in $G$ is at least as good as her outlook after she makes one particular move in $G$, at which point it is Right's turn.

\begin{theorem}[Subordinate Comparison in $\mathcal{D}$] \label{thm:comparisond}
Let $G,H\in\mathcal{D}$. Then $G\geq_\mathcal{D} H$ if and only if
\begin{enumerate}
\item $o(G)\geq o(H)$;
  \item$\forall H^L\in H^{\mathcal{L}}, \mbox{ either }
  \exists G^L\in G^{\mathcal{L}}\!:G^L\geq_\mathcal{D} H^L\,\,or\,\,\exists H^{LR}\in H^{L\mathcal{R}}\!:G\geq_\mathcal{D} H^{LR};$
  \item $\forall G^R\in G^{\mathcal{R}}, \mbox{ either }\exists H^R\in H^{\mathcal{R}}\!:G^R\geq_\mathcal{D} H^R\,\,or\,\,\exists G^{RL}\in G^{R\mathcal{L}}\!:G^{RL}\geq_\mathcal{D} H.$
\end{enumerate}
\end{theorem}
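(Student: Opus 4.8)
The plan is to prove the two directions separately, using Theorem~\ref{thm:commonnormalpart} for the forward direction and a direct strategy-stealing argument for the converse.

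\medskip

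\noindent\textbf{Forward direction.} Suppose $G\geq_\mathcal{D} H$. Then condition (1) is immediate by taking $X=\bs 0\in\mathcal{D}$ in Definition~\ref{def:eq}, and conditions (2) and (3) are exactly the two clauses of Theorem~\ref{thm:commonnormalpart}, once we observe that $\mathcal{D}$ is an absolute universe: it is parental (for non-empty $S,T\subset\mathcal{D}$, every follower of $\{S\mid T\}$ is a dicot since both players can move at the root and all deeper followers lie in $\mathcal{D}$) and dense (given any $G$ and any target outcome $x$, one can adjoin a suitable dicot — e.g.\ a large dicot zugzwang or a component forcing a desired end behaviour — to realise $x$; alternatively cite \cite{LarssonNPS}). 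So the forward direction is essentially a citation plus the absoluteness check.

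\medskip

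\noindent\textbf{Converse direction.} This is where the real work lies. Assume (1), (2), (3) hold; I want to show $o(G+X)\geq o(H+X)$ for every $X\in\mathcal{D}$, i.e.\ $o_L(G+X)\geq o_L(H+X)$ and $o_R(G+X)\geq o_R(H+X)$. I would prove this by induction on the rank of $X$ (with a secondary induction on the ranks of $G,H$ where the hypotheses (2),(3) are invoked recursively). For the base case $X=\bs 0$, condition (1) gives it directly. For the inductive step, to show $o_L(G+X)\geq o_L(H+X)$: if $o_L(H+X)=\rm R$ there is nothing to prove, so assume $o_L(H+X)=\rm L$, meaning Left (moving first) wins $H+X$. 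Either Left's winning move is in $H$, say to $H^L+X$ with $o_R(H^L+X)=\rm L$, or it is in $X$, to $H+X^L$ with $o_R(H+X^L)=\rm L$ (she cannot have no move, since $H$ a dicot and $H+X$ non-terminal forces a Left option unless both are ends, handled by the base case). In the first subcase, apply condition (2) to $H^L$: either some $G^L\geq_\mathcal{D} H^L$, whence $o_R(G^L+X)\geq o_R(H^L+X)=\rm L$ and Left wins $G+X$ by moving to $G^L+X$; or some $H^{LR}$ satisfies $G\geq_\mathcal{D} H^{LR}$, and then using $o_R(H^L+X)=\rm L$ we get $o_L(H^{LR}+X)=\rm L$ for \emph{every} Right option $H^{LR}$ of $H^L$, in particular this one, so $o_L(G+X)\geq o_L(H^{LR}+X)=\rm L$. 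In the second subcase, $X^L$ has smaller rank, so by induction $o_R(G+X^L)\geq o_R(H+X^L)=\rm L$ and Left wins $G+X$ by moving to $G+X^L$. The argument for $o_R(G+X)\geq o_R(H+X)$ is the mirror image, using condition (3) and the conjugate symmetry.

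\medskip

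\noindent\textbf{Main obstacle.} The delicate point is the case analysis in the converse: one must correctly chase the definitions of $o_L,o_R$ through the disjunctive sum and handle the ``reversibility'' alternative (the $H^{LR}$ / $G^{RL}$ branches) — in particular noticing that when $o_R(H^L+X)=\rm L$ one may deduce $o_L(H^{LR}+X)=\rm L$ for the relevant $H^{LR}$, which is what lets condition (2) close the loop. A secondary subtlety is being careful that all the positions appearing in the recursion ($G^L$, $H^{LR}$, $G+X^L$, etc.) still lie in $\mathcal{D}$, which holds by option- and sum-closure of the universe, so that the inductive hypotheses and conditions (2),(3) genuinely apply. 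One also has to make sure the "Left has no move" corner cases are absorbed by the $X=\bs 0$ base case or by dicot-ness; these are the same bookkeeping issues that appear in the proof of Theorem~\ref{thm:pm}.
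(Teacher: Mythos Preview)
Your proposal is correct and follows essentially the same route as the paper: the forward direction cites absoluteness of $\mathcal{D}$ and Theorem~\ref{thm:commonnormalpart}, and the converse is an induction on the rank of $X$ (the paper phrases it as a smallest-rank distinguishing $X$, which is the contrapositive formulation of the same induction) with the identical three-way case split on whether $H+X$ is a Left-end, Left's winning move lies in $H$, or it lies in $X$. One small remark: the parenthetical about a ``secondary induction on the ranks of $G,H$'' is unnecessary and slightly misleading, since conditions (2) and (3) hand you genuine $\geq_\mathcal{D}$ relations that can be applied directly to $X$ without further recursion.
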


\begin{proof}
\noindent
($\Rightarrow$)
It is trivial that $G\geq_\mathcal{D} H$ implies item 1. Since $\D$ is absolute, $G\geq_\mathcal{D} H$ implies items 2 and 3 due to Theorem \ref{thm:commonnormalpart}.\\

($\Leftarrow$)
Assume $1$, $2$ and $3$, and also suppose that $G \not \geq_\mathcal{D} H$. By the definition of the partial
order, there is a distinguishing game $X$ such that either $o_L(G+X)<o_L(H+X)$ or $o_R(G+X)<o_R(H+X)$.
Choose $X$ to be of the smallest rank such that one or both hold, say $o_L(G+X)=\rm{R}<o_L(H+X)=\rm{L}$ ( 
if the simplest case is instead $o_R(G+X)<o_R(H+X)$, the argument is similar). There are three cases:

\begin{enumerate}[(a)]
 
 \item $H=0$ and $X=0$.
 
\noindent In this case, $o_L(H)=\rm{L}$ and $o_L(H+X)=\rm{L}$. Due to the assumption, $o_L(G)=o_L(G+X)=\rm{R}$. Therefore, $o_L(G)<o_L(H)$, contradicting $o(G)\geq o(H)$.

  \item $o_L(H+X)=o_R(H^L+X)=\rm{L}$, for some $H^L\in H^{\mathcal{L}}$.

  \noindent In this case, because of part $2$, we have either $G^L\geq_\mathcal{D} H^L$ or $G\geq_\mathcal{D} H^{LR}$. If the first holds,
then $o_L(G+X)\ge o_R(G^L+X)\ge o_R(H^L+X)=o_L(H+X)=\rm{L}$. If the second holds, then
 $o_L(G+X)\geq o_L(H^{LR}+X)\geq o_R(H^L+X)=o_L(H+X)=\rm{L}$. Both
 contradict the assumption $o_L(G+X)=\rm{R}<o_L(H+X)=\rm{L}$.

  \item $o_L(H+X)=o_R(H+X^L)=\rm{L}$, for some $X^L\in X^{\mathcal{L}}$.

\noindent By the ``smallest rank'' assumption, $o_R(G+X^L)\geq o_R(H+X^L)$. Therefore, $o_L(G+X)\geq o_R(G+X^L)\geq o_R(H+X^L)=o_L(H+X)=\rm{L}$. Once more, we contradict $o_L(G+X)=\rm{R}<o_L(H+X)=\rm{L}$.

\end{enumerate}

In each case we get a contradiction, and so we have shown that $G\geq_\mathcal{D} H$.
\end{proof}

We have a very similar result for comparison in $\mathcal{E}$; conditions 2 and 3 are identical, and only 1 must be made stronger. Above, for the dicot universe, we require only that the outcome of $G$ is at least the outcome of $H$. This is needed for the base case of $X=\bs 0$. For the dead-ending universe, where there are non-zero ends, the base case is if $X$ is an arbitrary Left-end, and so then we need to consider the outcomes of $G,H$ with any Left-end; that is, we need the {\em strong} left outcomes, $\hat{o}(G)$ and $\hat{o}(H)$. 

\begin{theorem}[Subordinate Comparison in $\mathcal{E}$] \label{thm:comparisone}
Let $G,H\in\mathcal{E}$. Then, $G\geq_\mathcal{E} H$ if and only if
\begin{enumerate}
\item $\hat{o}(G)\geq \hat{o}(H)$;
  \item For all $H^L\in H^{\mathcal{L}}, 
  \exists G^L\in G^{\mathcal{L}}\!:G^L\geq_\mathcal{E} H^L\,\,or\,\,\exists H^{LR}\in H^{L\mathcal{R}}\!:G\geq_\mathcal{E} H^{LR};$
  \item For all $G^R\in G^{\mathcal{R}}, \exists H^R\in H^{\mathcal{R}}\!:G^R\geq_\mathcal{E} H^R\,\,or\,\,\exists G^{RL}\in G^{R\mathcal{L}}\!:G^{RL}\geq_\mathcal{E} H.$
\end{enumerate}
\end{theorem}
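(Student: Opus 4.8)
The plan is to mirror the proof of Theorem~\ref{thm:comparisond}, with one change: the role played there by the ordinary outcome of $\bs 0$ is now played by the \emph{strong} outcome, because in $\mathcal{E}$ the ``smallest'' test games are not just $\bs 0$ but arbitrary Left- or Right-ends.

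For the forward direction, items 2 and 3 would be immediate from the Common Normal Part (Theorem~\ref{thm:commonnormalpart}), once we recall that $\mathcal{E}$ is absolute (parental because $\{S\mid T\}$ is dead-ending whenever $S,T\subseteq\mathcal{E}$ are non-empty, and dense as noted earlier). For item 1, I would observe that $G\geq_{\mathcal{E}}H$ forces $o_L(G+Y)\geq o_L(H+Y)$ for every Left-end $Y\in\mathcal{E}$, hence $\hat{o}_L(G)\geq\hat{o}_L(H)$; by Theorem~\ref{thm:so} the infimum defining $\hat{o}_L$ is already attained inside $\{\bs 0, M_{k-1}\}\subseteq\mathcal{E}$, so restricting the ends to $\mathcal{E}$ costs nothing. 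The symmetric argument with Right-ends gives $\hat{o}_R(G)\geq\hat{o}_R(H)$, and jointly these say exactly $\hat{o}(G)\geq\hat{o}(H)$.

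For the converse, assume 1--3 and suppose, for contradiction, $G\not\geq_\mathcal{E}H$; choose a distinguishing $X\in\mathcal{E}$ of minimum rank, and say $o_L(G+X)=\rm R<\rm L=o_L(H+X)$ (the alternative $o_R(G+X)<o_R(H+X)$ is handled symmetrically, with Right-ends and $\hat{o}_R$ replacing Left-ends and $\hat{o}_L$). Since $o_L(H+X)=\rm L$, exactly one of three situations occurs, and I would follow the case division of Theorem~\ref{thm:comparisond}: (a) $H+X$ has no Left option, i.e.\ both $H$ and $X$ are Left-ends; (b) $o_R(H^L+X)=\rm L$ for some $H^L\in H^{\mathcal{L}}$; (c) $o_R(H+X^L)=\rm L$ for some $X^L\in X^{\mathcal{L}}$. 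Case (b) uses item 2: from $G^L\geq_\mathcal{E}H^L$ we get $o_L(G+X)\geq o_R(G^L+X)\geq o_R(H^L+X)=\rm L$, and from $G\geq_\mathcal{E}H^{LR}$ we get $o_L(G+X)\geq o_L(H^{LR}+X)\geq o_R(H^L+X)=\rm L$; either way $o_L(G+X)=\rm L$, a contradiction. Case (c) uses minimality of $\rank(X)$ (and option-closure, so $X^L\in\mathcal{E}$): then $o_R(G+X^L)\geq o_R(H+X^L)=\rm L$, so $o_L(G+X)\geq o_R(G+X^L)=\rm L$, again a contradiction.

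Case (a) is the genuinely new one, and the only place item 1 is used. When $H$ is a Left-end, $H+Y$ is a Left-end for every Left-end $Y$, so $\hat{o}_L(H)=\rm L$; but the particular Left-end $X$ witnesses $\hat{o}_L(G)\le o_L(G+X)=\rm R$, so $\hat{o}_L(G)=\rm R<\rm L=\hat{o}_L(H)$, contradicting $\hat{o}(G)\geq\hat{o}(H)$. I expect this interplay between the base case and the strong outcome to be the main point requiring care: one must be confident that $\hat{o}$ really is the correct invariant, i.e.\ that it encodes \emph{all} the behaviour under sums with ends that the comparison can ever detect --- and that is precisely the content of Theorem~\ref{thm:so}. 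Beyond this, the argument is a routine transcription of the dicot proof, with the substantive work delegated to the Common Normal Part.
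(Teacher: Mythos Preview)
Your proposal is correct and follows essentially the same approach as the paper's proof: the forward direction invokes the Common Normal Part for items 2 and 3 and derives item 1 from the definition of $\hat{o}$, while the converse chooses a minimal-rank distinguishing $X$ and runs the same three-case analysis, with case (a) (both $H$ and $X$ Left-ends) being the one place where $\hat{o}$ replaces $o$. The only cosmetic difference is that for item 1 you argue directly from $o_L(G+Y)\geq o_L(H+Y)$ to $\hat{o}_L(G)\geq\hat{o}_L(H)$, whereas the paper phrases it as a short contradiction; your appeal to Theorem~\ref{thm:so} is harmless but not actually needed there.
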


\begin{proof}
\noindent ($\Rightarrow$)
Since $\E$ is absolute, then, due to Theorem \ref{thm:commonnormalpart}, $G\geq_\mathcal{E} H$ implies items 2 and 3. Also, $G\geq_\mathcal{E} H$ implies item 1 because, if not, say, without loss of generality, that $\hat{o}_L(G)=\rm{R}$ and $\hat{o}_L(H)=\rm{L}$. If so, \emph{for all} Left-ends $Y$, $o_L(H+Y)=\rm{L}$, and  there is a Left-end $X$ such that $o_L(G+X)=\rm{R}$. This means $o_L(G+X)<o_L(H+X)$, which contradicts $G\geq_\mathcal{E} H$.

\noindent ($\Leftarrow$)
Assume items $1$, $2$ and $3$, and also suppose that $G \not \geq_\mathcal{E} H$. By the definition of the partial
order, there is a distinguishing game $X$ such that either $o_L(G+X)<o_L(H+X)$ or $o_R(G+X)<o_R(H+X)$.
Choose $X$ to be of the smallest rank such that one or both of those hold. Without loss of generality, say $o_L(G+X)=\rm{R}<o_L(H+X)=\rm{L}$.  So Left wins $H+X$ moving first. We consider the cases (a) $H+X$ is a Left-end; (b) Left's good move in $H+X$ is in $H$; and (c) Left's good move in $H+X$ is in $X$. 

\begin{enumerate}[(a)]

 \item $H=\left\{\; \mid H^\mathcal{R}\right\}$ and $X=\left\{\; \mid X^\mathcal{R}\right\}$.
\noindent
In this case, $o_L(H+Y)=\rm{L}$ for any  Left-end $Y$. So, $\hat{o}_L(H)=\rm{L}$. On the other hand, due to the assumption, $o_L(G+X)=\rm{R}$ and so $\hat{o}_L(G)=\rm{R}$. Therefore, $\hat{o}_L(G)<\hat{o}_L(H)$, contradicting $\hat{o}(G)\geq \hat{o}(H)$.

  \item $o_L(H+X)=o_R(H^L+X)=\rm{L}$, for some $H^L\in H^{\rm{L}}$.

  \noindent In this case, because of part $2$, we have either $G^L\geq_\mathcal{E} H^L$ or $G\geq_\mathcal{E} H^{LR}$. If the first holds,
then $o_L(G+X)\ge o_R(G^L+X)\ge o_R(H^L+X)=o_L(H+X)=\rm{L}$. If the second holds, then
 $o_L(G+X)\geq o_L(H^{LR}+X)\geq o_R(H^L+X)=o_L(H+X)=\rm{L}$. Both
 contradict the assumption $o_L(G+X)=\rm{R}<o_L(H+X)=\rm{L}$.

  \item $o_L(H+X)=o_R(H+X^L)=\rm{L}$, for some $X^L\in X^{\mathcal{L}}$.

\noindent By the ``smallest rank'' assumption, $o_R(G+X^L)\geq o_R(H+X^L)$. Therefore, $o_L(G+X)\geq o_R(G+X^L)\geq o_R(H+X^L)=o_L(H+X)=\rm{L}$. Once more, we contradict $o_L(G+X)=\rm{R}<o_L(H+X)=\rm{L}$.

\end{enumerate}

\noindent
If the simplest case is instead $o_R(G+X)<o_R(H+X)$, the argument is similar.
Hence, we have shown that $G\geq_\mathcal{E} H$.
\end{proof}

\begin{example}
Consider $G=\left\{-\bs{1}\mid \bs{1}\right\}$. Then, $\hat{o}_L(G) = L$ and $\hat{o}_R(G) = R$. Therefore, $\hat{o}(G)=\mathscr{N}=\hat{o}(\bs 0)$. Note that $G^R = \bs{1}$ and $G^{RL} = \bs 0\geq_\mathcal{E} \bs 0$, so Theorem~\ref{thm:comparisone} gives $G\geq_\mathcal{E} \bs 0$. Symmetrically, $G\leq_\mathcal{E} \bs 0$. Hence $G\equiv_\mathcal{E} \bs 0$. 
\end{example}
We will encounter a generalization of this theme in Proposition~\ref{thm:atomice} (2). These tests for inequality modulo dicots or dead-ending games will be used to find canonical forms for $\mathcal{E}$ in Section \ref{sec:uniqueness}, and to demonstrate the conjugate invertibility property for both $\mathcal{D}$ and $\mathcal{E}$ in Section~\ref{sec:conjugate}.

\section{Reductions and defined canonical forms}\label{sec:reductions}
At the core of classical CGT we find theory of reduction, via domination and reversibility; the goal is to arrive at a `simplest form' to represent a full equivalence class of games (in mis\`ere play a.k.a. mis\`ere quotient). We demonstrate how this can be achieved in the mis\`ere universes of dicot and dead-ending games.
\subsection{Domination and reversibility}\label{sec:reverse}

In normal-play, removing all dominated options and bypassing all reversible options result in a  canonical  form. The same definitions of domination and reversibility also give  canonical  forms for general mis\`ere-play \cite{MesdaO2007, Siege}, but the usual reversibility does not work for arbitrary universes. The problem arises when reversing through ends. 

In normal-play, \textit{bypassing a reversible option $A$ through $B$} is to replace $A$ by the Left options of $B$, even if the replacement set $B^{\mathcal{L}}$ is empty. This results in a simpler game equivalent to the original. 
For reversibility in restricted mis\`ere play, when reversing through ends, replacing a reversible option with nothing  does not necessarily give an equivalent game  (see \cite{MilleyRenault2017} for details). Alternate simplifications were developed for the dicot universe $\mathcal{D}$ in \cite{DorbecRSS}. Our goal in this subsection is to do the same for the universe $\E$ of dead-ending games, which will lead to the development of  canonical  forms for $\mathcal{E}$ in  Section \ref{sec:uniqueness}.

We first note that removing dominated options does work modulo any universe, as demonstrated in \cite{DorbecRSS} for the dicot case. As in normal play, if there are at least two Left options and one dominates the other (under mis\`ere play modulo some universe), then we can remove the dominated option. 
The  concepts in this section are defined only from Left's perspective, and the corresponding Right concepts are defined analogously. 

\begin{theorem}[\cite{DorbecRSS} Domination]\label{thm:domination} 
Let $\U\subset \M$ be a universe, and let $G=\left\{\GL\mid \GR\right\} \in \U$. If  $A, B\in \GL$ with $A\leq_\U B$ then $G \equiv_\U \left\{\GL\setminus\left\{A\right\}\mid \GR\right\}$. 
\end{theorem}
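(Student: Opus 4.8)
The plan is to establish the two inequalities $G\geq_\U G'$ and $G'\geq_\U G$ separately, where $G'=\bigl\{\GL\setminus\{A\}\mid\GR\bigr\}$; we may assume $A\neq B$ as elements of $\GL$ (if $A=B$ there is nothing to delete). The inequality $G\geq_\U G'$ is immediate from the mis\`ere hand-tying principle, Lemma~\ref{lem:greediness}. Indeed, $B\in\GL\setminus\{A\}$, so $G'$ has at least one Left option, and $\bigl(\GL\setminus\{A\}\bigr)\cup\{A\}=\GL$; applying Lemma~\ref{lem:greediness} with $G'$ in the role of $G$ and with the extra Left option $A$ gives $G=\bigl\{\GL\mid\GR\bigr\}\geq_\U G'$.

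The substantive direction is $G'\geq_\U G$, that is, $o(G'+X)\geq o(G+X)$ for every $X\in\U$, which I would prove by induction on $\rank(X)$, establishing $o_L(G'+X)\geq o_L(G+X)$ and $o_R(G'+X)\geq o_R(G+X)$ simultaneously; since there are only two results, these say ``$o_L(G+X)=\rm{L}$ implies $o_L(G'+X)=\rm{L}$'' and likewise for right outcomes. Assume first $o_L(G+X)=\rm{L}$. Because $B\in\GL$, the sum $G+X$ has a Left option, so Left has a first move in $G+X$ to a position with right outcome $\rm{L}$. If that move is in $X$, to $G+X^L$, then in $G'+X$ Left moves to $G'+X^L$, and since $\rank(X^L)<\rank(X)$ the induction hypothesis gives $o_R(G'+X^L)\geq o_R(G+X^L)=\rm{L}$. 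If it is a move to $G^L+X$ with $G^L\neq A$, then $G^L$ is also a Left option of $G'$, so the identical move is available in $G'+X$. If it is the move to $A+X$, then $o_R(A+X)=\rm{L}$; now $A\leq_\U B$ together with $X\in\U$ forces $o_R(B+X)\geq o_R(A+X)=\rm{L}$, and $B$ is a Left option of $G'$, so Left moves to $B+X$ in $G'+X$. In every case $o_L(G'+X)=\rm{L}$. Assume next $o_R(G+X)=\rm{L}$; then $G+X$ has a Right option and all its Right options have left outcome $\rm{L}$. The Right options of $G'+X$ are the positions $G^R+X$ with $G^R\in\GR$, which are also Right options of $G+X$ and hence have left outcome $\rm{L}$, and the positions $G'+X^R$ with $X^R\in X^{\mathcal{R}}$, for which the induction hypothesis gives $o_L(G'+X^R)\geq o_L(G+X^R)=\rm{L}$; moreover $G'+X$ has at least one Right option (otherwise $G+X$ would have none, forcing $o_R(G+X)=\rm{R}$). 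Hence $o_R(G'+X)=\rm{L}$. This completes the induction, and combining the two inequalities yields $G\equiv_\U G'$.

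I expect the one delicate point to be the case in which the first player's optimal move in $G+X$ is exactly the dominated option $A$: that is precisely where the hypothesis $A\leq_\U B$ is invoked, and it is essential that $X\in\U$ so that the domination hypothesis applies to $A+X$ versus $B+X$. The base case $\rank(X)=0$, i.e.\ $X=\bs 0$, needs no separate treatment: it is the instance of the inductive step in which the ``move inside $X$'' subcases are vacuous, and $\bs 0\in\U$ by option-closure so that $A\leq_\U B$ may still be used there. Note that the statement does not require $G'\in\U$ --- only the test games $X$ range over $\U$ --- so the fact that deleting an option need not preserve membership in a universe is harmless here. All told, this is the mis\`ere analogue of the classical normal-play domination argument; unlike reversibility (the subject of the rest of this section), domination behaves well in every universe, and the proof amounts to bookkeeping with the outcome recursions together with a single appeal to the hypothesis $A\leq_\U B$.
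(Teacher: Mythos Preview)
Your proof is correct. The paper does not supply its own proof of this theorem; it is quoted from \cite{DorbecRSS} and stated without argument, so there is nothing in the paper to compare against. Your argument is the standard one: one direction is the hand-tying principle (Lemma~\ref{lem:greediness}), and the other is a direct induction on $\rank(X)$ in which the dominated move $A+X$ is replaced by $B+X$ using $A\leq_\U B$ and $X\in\U$. The observations you flag---that $G'$ need not lie in $\U$, and that the base case $X=\bs 0$ is just the degenerate instance of the inductive step---are accurate and match how the paper treats comparison modulo $\U$ (see Definition~\ref{def:eq} and the surrounding discussion).
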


Note that if $G$ is in the universe $\mathcal{D}$ of dicots (or $\mathcal{E}$ of dead-ending games), a reduced game $\left\{\GL\setminus\left\{A\right\}\mid \GR\right\}$, as in Theorem~\ref{thm:domination}, is still in  $\mathcal{D}$ ($\mathcal{E}$). To address the problem with reversible options in restricted universes, we adapt the following definitions from \cite{LarssonNPS}. Note that in that paper, ends are called ``atomic games'', and hence they use the term `atomic-reversible' instead of `end-reversible'.

\begin{definition}
Consider a universe $\U\in \M$, and let $G\in \U$. Suppose that there are followers $A\in \GL$ and $B\in A^\mathcal R$ with $B\leq_\U G$. Then the Left option $A$ is \textit{reversible through} its Right option $B$. If $B^\mathcal{L}$ is non-empty then $B^\mathcal{L}$ is a \textit{replacement set} for $A$, and in this case  $A$ is \textit{open-reversible}. If the \emph{reversing option}, $B$, is a Left-end, that is, if $B^\mathcal{L}=\emptyset$, then $A$ is \textit{end-reversible}.
 \end{definition}

We begin by showing that for any universe $\U$, replacement of open-reversible options works as indicated by this definition; when replacing $A$ by the set $\BL$, one does obtain an equivalent game (similar to the classical normal-play theory). This is Theorem \ref{thm:nonatomic} below, a straightforward generalization of the result for dicots \cite{DorbecRSS}. We then consider the more intricate case of end-reversible options, specifically in the universes $\mathcal{D}$ (Theorems \ref{thm:atomicd} and \ref{thm:substituted}, \cite{DorbecRSS}) and $\mathcal{E}$ (remainder of Section \ref{sec:reverse}). 

\begin{theorem}[Open reversibility]\label{thm:nonatomic}
Consider any absolute universe $\mathcal{U}\subseteq \cal M $, and let $G\in U$. If $A$ is an open-reversible Left option, reversible through $B$, with replacement set $\BL$, then
 $G\equiv_\mathcal{U} \left\{(\GL\setminus\left\{A\right\})\cup B^\mathcal{L} \mid {\GR}\right\}$.
\end{theorem}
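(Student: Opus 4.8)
The plan is to use the Subordinate Comparison tests (Theorems~\ref{thm:comparisond} and~\ref{thm:comparisone}, or rather the general machinery behind them via Theorem~\ref{thm:commonnormalpart}) to verify the two inequalities $G \geq_{\U} G'$ and $G' \geq_{\U} G$, where $G' = \left\{(\GL\setminus\{A\})\cup \BL \mid \GR\right\}$. Actually, since the cleanest route mirrors classical normal-play reversibility, I would instead argue directly with outcomes of sums $G+X$ versus $G'+X$, inducting on the rank of $X$, but lean on the already-established option-only tests to avoid re-deriving the hard absolute-CGT input. Let me sketch the direct approach, which I expect the authors intend.

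First I would fix $A\in\GL$ reversible through $B\in A^{\R}$, so $B\leq_{\U} G$, with $\BL\neq\emptyset$. The Right options of $G$ and $G'$ coincide, and the only change on the Left is that $A$ is swapped for the set $\BL$. For the inequality $G' \geq_{\U} G$: each new Left option $B^L\in \BL$ needs to be matched, but we have $B^L \in B^{\R\L}$ wait — more precisely, using condition~2 of the comparison theorem applied with $H=G$, for a Left option $B^L$ of $G'$ we note $G' $ has $B^L$ available and... the natural move is to observe that in $G+X$, any Left move to $A+X$ can be answered by Right moving to $B+X$, and since $B\leq_{\U} G$ we have $o(B+X)\leq o(G+X)$, so this reply is at least as good for Right as if the position were $G+X$; hence replacing $A$'s subtree by $\BL$ cannot help Left. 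Conversely, for $G\geq_{\U}G'$, any Left move in $G'+X$ to $B^L+X$ can be simulated from $G+X$ by moving to $A+X$ and, whoever Right's reply, Left re-routes through $B+X$ using $B\leq_{\U}G$ in reverse — this is the standard "reversing" bookkeeping. I would phrase both directions through the option-only tests: check condition~1 ($o(G)= o(G')$, or $\hat{o}(G)=\hat{o}(G')$ in $\mathcal{E}$, since the end-structure and Right options are unchanged and $A$ being open-reversible does not affect whether $G$ is an end), then conditions~2 and~3, where condition~3 is immediate (same Right options) and condition~2 is the content, handled by the $B\leq_{\U}G$ hypothesis together with the transitivity of $\geq_{\U}$ and, for the $G\geq_{\U}G'$ direction, by exhibiting for each $B^L\in\BL$ the reversing chain $A\to B$, $B\geq$-dominates...

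The main obstacle will be the $G \geq_{\U} G'$ direction, specifically verifying condition~2 of the comparison theorem for the newly introduced Left options $B^L \in \BL$ of $G' $: we need, for each such $B^L$, either some $G^L\in\GL$ with $G^L\geq_{\U}B^L$, or some $B^{LR}\in B^{L\R}$ with $G\geq_{\U}B^{LR}$. The candidate is to take $G^L = A$ and argue $A\geq_{\U}B^L$ by the chain "$A$ reverses through $B$, and $B^L$ is a Left option of $B$" — but this requires care, because it is really the reversibility condition $B\leq_{\U}G$ that makes the whole substitution valid, not a direct domination $A\geq_{\U}B^L$. So the correct formal route is the induction-on-$\text{rank}(X)$ argument on $o(G+X)$ vs $o(G'+X)$, using $B\leq_{\U}G$ to let Right "undo" Left's move to $B^L$ by treating $B$'s position as no better than $G$; I would present that as the spine of the proof, invoke Theorem~\ref{thm:commonnormalpart} only to confirm nothing is lost at the base case, and remark that the argument is verbatim the dicot proof of \cite{DorbecRSS} with $\U$ in place of $\mathcal{D}$, since nothing in it used diciocity — only that $\U$ is an absolute universe closed under the operations needed to keep $G'$ inside $\U$ (which holds as $\BL\subseteq\U$ by option-closure and $G'$ is then a legal $\{S\mid T\}$ with $S,T$ nonempty, using parentality).
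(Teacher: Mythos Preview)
Your proposal eventually lands on the right shape of argument---direct induction on $\rank(X)$ comparing $o(G+X)$ with $o(G'+X)$---and this is exactly what the paper does. But two things need fixing. First, the Subordinate Comparison tests (Theorems~\ref{thm:comparisond} and~\ref{thm:comparisone}) are proved only for $\mathcal{D}$ and $\mathcal{E}$, not for an arbitrary absolute universe $\mathcal{U}$, so you cannot invoke them here; the statement is for general absolute $\mathcal{U}$, and indeed the paper's proof does not touch those theorems at all.

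Second, and more substantively, your sketch of the direction $G' \geq_{\U} G$ has a gap. You argue that if Left plays $A+X$ in $G+X$, Right replies to $B+X$ and ``since $B\leq_{\U} G$ we have $o(B+X)\leq o(G+X)$''. But this compares $B$ to $G$, whereas what you need is to compare $B$ to $G'$: once Left is shown to win $B+X$ moving first, you must conclude that Left wins $G'+X$ moving first, and for that you need $G'\geq_{\U} B$, not $G\geq_{\U} B$. The paper isolates precisely this as \emph{Claim~1} and proves it by its own induction on $\rank(X)$ (using that $\BL\subseteq G'^{\mathcal{L}}$ and $G'^{\mathcal{R}}=G^{\mathcal{R}}$ together with $G\geq_{\U}B$). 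Without this intermediate claim, the case where Left's winning reply in $B+X$ is of the form $B+X^L$ (rather than some $B^L+X$) does not go through. Once Claim~1 is in hand, both directions are short: $G\geq_{\U}G'$ uses $G\geq_{\U}B$, and $G'\geq_{\U}G$ uses $G'\geq_{\U}B$. Your closing remark that the argument is the dicot proof of \cite{DorbecRSS} with $\mathcal{D}$ replaced by $\mathcal{U}$ is accurate, and parentality is exactly what guarantees $G'\in\mathcal{U}$, but the missing Claim~1 is the step you should make explicit rather than leave buried in that citation.
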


\begin{proof}

Consider $G, A, B$ as in the statement of the theorem, and recall that, since
$B$ is a reversing Right option, then $G\geq_\mathcal{U} B$. Moreover, there is a non-empty replacement set
$B^\mathcal{L}$, so we let
$H = \left\{ \GL\setminus\left\{A\right\},B^\mathcal{L} \mid {\GR}\right\}$, and note that, by the parental property $H\in U$.
 We need to prove that $H\equiv_\mathcal{U} G$, i.e., $o(G+X)=o(H+X)$ for all $X\in U$. We proceed by induction on the rank of $X$.

 Fix $X$.
 Note that $B^\mathcal{L}$, $\GL$ and $\HL$ are non-empty so that $B+X$, $G+X$
 and $H+X$ all have Left options. Moreover $A+X$ has Right options.\\

\noindent \textit{Claim~1:}  $H\geq_\mathcal{U} B$.\\

\noindent \textit{Proof of Claim~1:}
Suppose that Left starts in the game $B+X$. If $C\in B^\mathcal{L}$ then $C\in\HL$ and thus, if Left wins in $B+X$ with $C+X$, Left also wins in $H+X$ with $C+X$. For the base case $\XL=\emptyset$, this is enough. If $\XL\ne \emptyset$, if Left wins in $B+X$ with $B+X^L$, by induction, Left also wins in $H+X$ with $H+X^L$.

Suppose that Right starts in the game $H+X$. If Right wins in $H+X$ with $H+X^R$ then, by induction, Right wins in $B+X$ with $B+X^R$. For the Right moves $H^R+X$ (this includes the base case $X=0$), we observe that $H^R+X$ is also a Right option of $G+X$. Since $G\geq_\mathcal{U} B$, if Right wins with $H^R+X$, we must have a winning move for Right in $B+X$.
 This concludes the proof of Claim~1.\\

To prove that $G\equiv_\mathcal{U} H$, we have to prove that $G\geq_\mathcal{U}H$ and $G\leq_\mathcal{U} H$. Besides induction, we only need to use two arguments.

For $G\geq_\mathcal{U}H$, if $B^L+X$ is a winning move for Left in $H+X$, then because $G\geq_\mathcal{U}B$, there is a winning move for Left in $G+X$.

For $G\leq_\mathcal{U} H$, if $A+X$ is a winning move for Left in $G+X$, after an automatic Right reply to $B+X$, Left must have a winning move again. But, by Claim~1, the existence of that winning move implies that Left has a winning move in $H+X$.

Thus $o(H+X)=o(G+X)$, for all $X$, and so $G\equiv_\mathcal{U} H$.
\end{proof}

This shows that reversibility through non-ends works just as you would expect: if the inequality for the reversibility is modulo $\U$, replace the reversible option with its replacement set and you obtain an  equivalent game modulo $\U$. Before we can deal with end-reversibility --- where there is no replacement set --- we must prove a strategic fact for Left playing in a game $G$ with an end-reversible option.

\begin{lemma}[Weak Avoidance Property]\label{lem:WeakAvoidanceProperty}
Let $G\in \mathcal{U}$ and let $A$ be an end-reversible Left option of $G$.
Let $X$ be a game such that $\XL\ne \emptyset$. If Left wins $G+X$ with $A+X$, then Left also wins $G+X$ with some  $G+X^L$.
\end{lemma}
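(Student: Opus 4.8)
The plan is to argue by induction on the rank of $X$, tracing what happens after the hypothesised Left move $A+X$. Since $A$ is end-reversible through $B$, we have $B\leq_\U G$ and $B$ is a Left-end, so $B^\mathcal{R}$ is the only available set of moves from $B$. Suppose Left wins $G+X$ by moving to $A+X$. It is now Right's turn in $A+X$; Right will move either inside $A$ (to some $A^R+X$) or inside $X$ (to some $A+X^R$). The key observation is that, because $A$ is reversible \emph{through $B$}, we may assume Right's best reply, if he chooses to move in $A$, is to $B+X$: more precisely, for Left to have been winning at $A+X$, she must in particular survive the reply $B+X$, and by $B\leq_\U G$ any winning response Left has from $B+X$ corresponds to (or is dominated by) a winning response from $G+X$. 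The point of the lemma is to upgrade this into a statement that Left can already win by a move \emph{of the form $G+X^L$}, i.e. a move in the $X$-component, exploiting the hypothesis $\XL\neq\emptyset$.

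First I would handle the case where, from the position $A+X$, Right's only winning-avoidance options are exhausted, i.e. analyse Left's guaranteed good reply two plies down. Concretely: since Left wins $A+X$ (Right to move), in particular Left wins after Right plays $B+X$ (which is legal since $B\in A^\mathcal{R}$). So Left has a winning move from $B+X$. As $B$ is a Left-end, that move is of the form $B+X^L$ for some $X^L\in\XL$ — here is where the hypothesis $\XL\neq\emptyset$ is used, and also where we must make sure $X$ genuinely has a Left option to move to. So Left wins $B+X^L$. Now invoke $B\leq_\U G$ (more precisely $G\geq_\U B$ combined with Lemma~\ref{lem:dis} to add $X^L$): from $G\geq_\U B$ we get $G+X^L\geq_\U B+X^L$, hence $o_L(G+X^L)\geq o_L(B+X^L)=\rm L$, i.e. Left (to move) wins $G+X^L$. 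But $G+X^L$ is exactly one of the Left options $G+X$ offers from the position $G+X$ with Left to move. Hence Left wins $G+X$ via the move to $G+X^L$, which is the desired conclusion — and in fact no induction on $X$ is needed, only the monotonicity/cancellation Lemma~\ref{lem:dis} applied to the reversibility inequality.

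I expect the subtle point to be the very step ``Left wins $A+X$ with Right to move, therefore Left wins after the specific Right reply $B+X$.'' This is immediate from the definition of outcome provided $B+X$ is in fact one of Right's options from $A+X$, which it is since $B\in A^\mathcal{R}$; so the only thing to be careful about is that we are reading the outcome inequalities in the correct direction ($o_R(A+X)=\min$ over Right's options of $o_L$, so if $o_R(A+X)=\rm L$ then $o_L(B+X)=\rm L$). The other place to be careful is that a winning Left move from $B+X$ really must be of the form $B+X^L$ and not $B^L+X$, which holds precisely because $B$ is a Left-end and hence $B^\mathcal{L}=\emptyset$; combined with $\XL\neq\emptyset$ this guarantees such a move exists and has the right shape. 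After that, the transfer from $B+X^L$ to $G+X^L$ is a clean application of $G\geq_\U B$ together with the cancellative Lemma~\ref{lem:dis}.
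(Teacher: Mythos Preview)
Your argument is correct and follows essentially the same route as the paper: from $o_R(A+X)=\mathrm{L}$ deduce $o_L(B+X)=\mathrm{L}$, then use $B^{\mathcal L}=\emptyset$ to force the winning Left reply to be $B+X^L$, and finally transfer via $G\geq_{\mathcal U}B$. One small slip: after Left moves to $B+X^L$ it is \emph{Right} to move, so the relevant outcomes are $o_R(B+X^L)=\mathrm{L}$ and hence $o_R(G+X^L)=\mathrm{L}$, not $o_L$ as you wrote; with that corrected (and noting that no induction and no appeal to Lemma~\ref{lem:dis} are actually needed, just the definition of $\geq_{\mathcal U}$), your proof matches the paper's.
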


\begin{proof}
Let $A$ be an end-reversible Left option of $G$ and let $B\in A^\mathcal{R}$ be
a reversing option for $A$. 

 By assumption, $G\geq_\mathcal{U} B$ and $B^{\cal L}=\emptyset$. 
 Since $B\in A^{\cal R}$, then $B+X$ is a Right option of $A+X$. Consequently, if Right, playing first, loses $A+X$, then Left, playing first, wins $B+X$. Of course, because $B^{\cal L}=\emptyset$, the Left winning move must be some $B+X^L$. But $G\geq_\mathcal{U} B$, so $G+X^L$ must be a winning move for Left, from $G+X$.\end{proof}

The remainder of this subsection deals with the interesting case of finding a replacement set for an end-reversible option. 
There are essentially two cases. If Left has a good move other than the reversible option, then it can be eliminated, and otherwise, it can only be simplified. The specific simplification depends on the universe. The simplification for dicot mis\`ere is proven in \cite{DorbecRSS}. Note that in dicot mis\`ere, the only Left-end is the game \bs 0, so the following result covers all Left-end cases in dicot play.

    \begin{theorem}[\cite{DorbecRSS} End-reversibility in $\mathcal{D}$]\label{thm:atomicd}
Let $G\in \mathcal{D}$ and suppose that $A\in\GL$ is reversible through \bs 0. Then, either
\begin{enumerate}
\item Left has a winning move in $ G^{\cal L}\setminus\left\{A\right\}$, and 
$G\equiv_\mathcal{D} \left\{ \GL\setminus \left\{A\right\}\mid {\GR}\right\}$, or 
\item Left's option $A$ is the only winning move in $\GL$, and $G\equiv_\mathcal{D} \left\{ *,\GL\setminus\left\{A\right\}\mid {\GR}\right\}$.
\end{enumerate}
\end{theorem}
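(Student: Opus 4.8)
The plan is to combine the Weak Avoidance Property (Lemma~\ref{lem:WeakAvoidanceProperty}) with the Subordinate Comparison test for $\mathcal{D}$ (Theorem~\ref{thm:comparisond}), and to split on whether or not Left has a winning response other than $A$ in every sum $G+X$. First I would set $G' = \left\{\GL\setminus\{A\}\mid \GR\right\}$ and $G'' = \left\{*,\GL\setminus\{A\}\mid\GR\right\}$; both lie in $\mathcal{D}$ by the parental property, noting that $\GL\setminus\{A\}$ may be empty in the second game but $*$ guarantees a Left option there, which is what dicot-ness requires. Since $A$ is reversible through $\bs 0$, we have $G\geq_\mathcal{D}\bs 0$ and $\bs 0^\mathcal{L}=\emptyset$, so $A$ is genuinely end-reversible and Lemma~\ref{lem:WeakAvoidanceProperty} applies: whenever $\XL\neq\emptyset$ and Left wins $G+X$ via $A+X$, she also wins via some $G+X^L$.

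For case~1, I would assume Left has a winning move in $\GL\setminus\{A\}$ (I expect the precise hypothesis to be ``for every $X$ with $o_L(G+X)=\rm L$, Left has a winning option $G'+X$ or $G+X^L$'', i.e. $A$ is never \emph{needed}), and prove $G\equiv_\mathcal{D} G'$ directly by induction on $\rank(X)$, showing $o(G+X)=o(G'+X)$. The inequality $G'\leq_\mathcal{D} G$ is immediate from the Hand-tying Principle (Lemma~\ref{lem:greediness}) once $\GL\setminus\{A\}$ is nonempty, or trivially otherwise. For $G\leq_\mathcal{D} G'$ I would use Theorem~\ref{thm:comparisond}: conditions 2 and 3 are routine because $G$ and $G'$ share all options except that $G$ has the extra Left option $A$, which reverses through $\bs 0\in A^\mathcal{R}$ with $G'\geq_\mathcal{D}\bs 0$ (this handles $A$ in condition~2 via the $H^{LR}$ branch after checking $G'\geq_\mathcal{D}\bs 0$, which follows from $G\geq_\mathcal{D}\bs 0$ and an induction or a direct outcome argument); condition~1, $o(G')\geq o(G)$, is where Lemma~\ref{lem:WeakAvoidanceProperty} and the case hypothesis do the work — any first-player Left win in $G$ transfers to $G'$.

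For case~2, where $A$ is the \emph{only} winning Left option in $\GL$ in some critical sum, I would similarly show $G\equiv_\mathcal{D} G''$. One direction, $G''\geq_\mathcal{D} G$, is again Hand-tying (adding $*$). For $G\geq_\mathcal{D} G''$ I would apply Theorem~\ref{thm:comparisond} to the pair $(G,G'')$: the only new Left option to check in $G''$ is $*$, and I must produce either some $G^L\geq_\mathcal{D} *$ or show $G\geq_\mathcal{D} *^R = \bs 0$ (the latter holds), so condition~2 is fine; condition~3 is unchanged; and condition~1 reduces to $o(G)\geq o(G'')$, where the point is that a first-player Left win in $G''$ either comes from an old option (also available in $G$) or from $*$, and in the latter situation the Weak Avoidance analysis shows $A$ was already serving that role in $G$. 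The main obstacle, as in the dicot paper, is pinning down the right formal phrasing of the dichotomy ``Left needs $A$ / Left does not need $A$'' so that it is both exhaustive and strong enough to feed condition~1 of the comparison theorem; once that is set up correctly the two equivalences follow from Lemmas~\ref{lem:greediness} and~\ref{lem:WeakAvoidanceProperty} together with Theorem~\ref{thm:comparisond}, with the $X=\bs 0$ base case handled separately since $\bs 0$ has no Left option and Lemma~\ref{lem:WeakAvoidanceProperty} does not apply there.
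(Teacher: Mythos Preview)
This theorem is quoted from \cite{DorbecRSS} and the present paper gives no proof of it; so there is nothing here to compare your attempt against directly. The closest analogues proved in this paper are Theorem~\ref{thm:atomice} and Lemma~\ref{lem:atomice} for $\mathcal{E}$, and those are done by a direct induction on $\rank(X)$, showing $o(G+X)=o(H+X)$ using the Weak Avoidance Property for the only nontrivial step. Your plan to route everything through Theorem~\ref{thm:comparisond} is a genuinely different organizational choice, and for Case~1 it works cleanly: the dichotomy ``Left has a winning move in $\GL\setminus\{A\}$'' is a statement about $X=\bs 0$ only, which is exactly condition~1 of Theorem~\ref{thm:comparisond}, and condition~3 of $G\geq_{\mathcal D}\bs 0$ supplies the $G^{RL}\geq_{\mathcal D}\bs 0$ you need to verify $G'\geq_{\mathcal D}\bs 0$ and hence handle the option $A$ in condition~2. (Your parenthetical guess that the hypothesis should be ``for all $X$'' is unnecessary and in fact wrong: the theorem really is just about $G$ itself.)

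There is, however, a concrete error in Case~2. You write that $G''\geq_{\mathcal D} G$ is ``again Hand-tying (adding $*$)'', but $G''=\{\,*,\GL\setminus\{A\}\mid\GR\,\}$ is not $G$ with $*$ adjoined; it is $G$ with $A$ \emph{replaced} by $*$. Lemma~\ref{lem:greediness} only lets you add options, not swap them, so it gives $\{\,*,\GL\mid\GR\,\}\geq_{\mathcal D} G$ and $\{\,*,\GL\mid\GR\,\}\geq_{\mathcal D} G''$, neither of which is what you need. The fix is to run Theorem~\ref{thm:comparisond} for this direction too: first check $G''\geq_{\mathcal D}\bs 0$ (condition~1 holds since $*$ is a winning Left move, condition~2 is vacuous, condition~3 is inherited from $G\geq_{\mathcal D}\bs 0$ exactly as in Case~1), and then use $\bs 0\in A^{\mathcal R}$ with $G''\geq_{\mathcal D}\bs 0$ to discharge $A$ in condition~2 of $G''\geq_{\mathcal D}G$. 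With that correction your Subordinate-Comparison approach goes through; alternatively, the direct outcome induction used in Theorem~\ref{thm:atomice} avoids the issue entirely.
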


Note that here $A$ is reversible through \bs 0, so $G\geq_\mathcal{D} \bs 0$, which means that Left wins $G$ playing first (for example to the redundant option $A$ !). 

In the special case where $G$ has only one left and one Right option, both end-reversible, then the following substitution can be made for $G$. Here we are applying Theorem \ref{thm:atomicd} to both end-reversible options to get $\left\{*\mid *\right\}$, and then using the known result that $\left\{* \mid *\right\}$, or $*+*$, is equivalent to \bs 0 modulo dicots \cite{Allen2011, MckayMN}.

\begin{theorem}[\cite{DorbecRSS} Substitution Theorem for $\mathcal{D}$]\label{thm:substituted}
If $G=\left\{A\mid C\right\}$ where $A$ and $C$ are end-reversible options
 then $G\equiv_\mathcal{D} \bs 0$.
\end{theorem}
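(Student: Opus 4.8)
The plan is to reduce $G$ to $\left\{*\mid*\right\}$ by applying the End-reversibility theorem for $\mathcal{D}$ (Theorem~\ref{thm:atomicd}) first to the Left option $A$ and then to the Right option $C$, and finally to invoke the known equivalence $\left\{*\mid*\right\}=*+*\equiv_\mathcal{D}\bs 0$ \cite{Allen2011, MckayMN}. Throughout, I would use that in $\mathcal{D}$ the only Left-end (and the only Right-end) is $\bs 0$, so ``$A$ is an end-reversible Left option'' means precisely that $A$ is reversible through $\bs 0$, whence $G\geq_\mathcal{D}\bs 0$; symmetrically, ``$C$ is an end-reversible Right option'' gives $\bs 0\geq_\mathcal{D} G$. (This observation already yields a one-line proof by transitivity, but the reduction below is the argument that fits the canonical-form development of this section and shows concretely which reductions carry $G$ down to $\bs 0$.)

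\emph{First reduction.} From $G\geq_\mathcal{D}\bs 0$ one gets $o_L(G)=\mathrm{L}$, so Left wins $G$ moving first; since $A$ is her only option, $A$ is a winning move. Apply Theorem~\ref{thm:atomicd} to $A$: its first alternative would require Left to have a winning move in $\GL\setminus\left\{A\right\}=\emptyset$, which is impossible, so the second alternative holds and $G\equiv_\mathcal{D}\left\{*,\GL\setminus\left\{A\right\}\mid \GR\right\}=\left\{*\mid C\right\}$; write $G'=\left\{*\mid C\right\}\in\mathcal{D}$.

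\emph{Second reduction.} Next I would check that $C$ is still end-reversible in $G'$. By the Right-hand analogue of the reversibility definition, ``$C$ end-reversible in $G$'' amounts to $\bs 0\in C^{\mathcal{L}}$ together with $\bs 0\geq_\mathcal{D} G$; since $G'\equiv_\mathcal{D} G$ and $\geq_\mathcal{D}$ is transitive, $\bs 0\geq_\mathcal{D} G'$, and $C$ is literally unchanged, so $C$ is reversible through $\bs 0$ in $G'$. Applying the Right analogue of Theorem~\ref{thm:atomicd} to $C$ in $G'$: its first alternative would require Right to have a winning move in $(G')^{\mathcal{R}}\setminus\left\{C\right\}=\emptyset$, impossible, so $G'\equiv_\mathcal{D}\left\{*\mid*\right\}$ (the surviving Left option is $*$, and $C$ is replaced by $*$). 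Finally $\left\{*\mid*\right\}=*+*\equiv_\mathcal{D}\bs 0$ \cite{Allen2011, MckayMN} (this can also be verified directly from Theorem~\ref{thm:comparisond}: $o(\left\{*\mid*\right\})=\mathscr{N}=o(\bs 0)$, condition~3 is met via the Left option $\bs 0$ of $*$, and the reverse inequality follows since $\left\{*\mid*\right\}$ is self-conjugate). Chaining the equivalences gives $G\equiv_\mathcal{D} G'\equiv_\mathcal{D}\left\{*\mid*\right\}\equiv_\mathcal{D}\bs 0$.

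The one step requiring genuine care is ``$C$ is still end-reversible in $G'$'': I must be sure that replacing $A$ by $*$ does not destroy the inequality certifying $C$'s reversibility. This is immediate once one notes that the relevant inequality $\bs 0\geq_\mathcal{D} G$ depends only on the $\equiv_\mathcal{D}$-class of $G$ and that $C$ is untouched by the first reduction — but it is the place where the argument differs from a naive one-shot application of Theorem~\ref{thm:atomicd}. A minor secondary point is the invocation of the ``either/or'' of Theorem~\ref{thm:atomicd} in the degenerate situation where the moving player has no option besides the reversible one: there the first alternative is vacuously unavailable, so the second must hold.
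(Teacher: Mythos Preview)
Your proposal is correct and follows exactly the approach the paper sketches in the paragraph preceding the theorem: apply Theorem~\ref{thm:atomicd} to each end-reversible option in turn to reach $\left\{*\mid*\right\}$, then invoke $\left\{*\mid*\right\}\equiv_\mathcal{D}\bs 0$. Your added care in verifying that $C$ remains end-reversible after the first replacement (since the reversibility condition depends only on the $\equiv_\mathcal{D}$-class of the ambient game and $C$ itself is untouched) fills in precisely the detail the paper's sketch elides, and your parenthetical one-line proof via $\bs 0\leq_\mathcal{D} G\leq_\mathcal{D}\bs 0$ is a pleasant bonus the paper does not mention.
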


We will now extend the technique of \cite{DorbecRSS} to develop a similar solution for end-reversibility in $\mathcal{E}$, and we introduce the concept of a \emph{fundamental option}, which is a singular survival option for a given player, since as we will see; it ensures victory even following an attack by the worst perfect murder component.

\begin{definition}
Let $G\in\mathcal{E}$. Then $A\in \GL$ is a \emph{fundamental Left option} if $\hat{o}_L(G) = \rm L$ and $\hat{o}_L\left (\left\{ \GL\setminus \left\{A\right\}\mid {\GR}\right\}\right ) = \rm R$.
\end{definition}

That is, if $A$ is a fundamental Left option of a game $G$, then there is a Left-end $E$ such that $A+E$ is the one and only good Left move in $G+E$. 

(Similarly, if $C$ is a fundamental Right option then there is a Right end $Y$ such that $C+Y$ is the one and only good right move in $G+Y$.)

We are now ready for our first end-reversible results for $\mathcal{E}$.  There are more cases to consider than there were in  $\mathcal{D}$. We start by proving that if an end-reversible option is not  fundamental, then it can be removed to leave an equivalent game.  It is extremely important to note that we will only remove a lone Left option if the result is still in $\mathcal{E}$! If the Right options of $G$ are not all Left-ends, then changing $G=\left\{A \mid \GR \right\}$ to $\left\{\; \mid \GR \right\}$ would create a game that is not dead-ending. In this case, we will instead settle for simplifying $G$ using a substitution (Theorem \ref{thm:substitutee}). 

In the proof below, we must consider the two cases  $|\GL|=1$ and $|\GL|>1$.
If $|\GL|=1$ then an end-reversible Left option cannot be fundamental, because its removal leaves a Left-end. Thus, as long as we  stay inside the universe, we can always remove an end-reversible option if it is the only option. If there is also only one Right option, and it too is end-reversible, then we can remove the options simultaneously and obtain $G\equiv_\mathcal{E}\bs 0$. 

\begin{theorem}[End-reversibility in $\mathcal{E}$]\label{thm:atomice}
Let $G\in \mathcal{E}$ and suppose that there is an end-reversible option $A\in\GL$.
\begin{enumerate}
\item[(1)] If $A$ is not a fundamental Left option of $G$ and $\left\{\GL\setminus \left\{A\right\} \mid \GR\right\} \in \mathcal{E}$,
then
$$G\equiv_\mathcal{E}\left\{ \GL\setminus \left\{A\right\}\mid {\GR}\right\}.$$
\item[(2)] If $G=\left\{A\mid C\right\}$, where $C$ is also end-reversible, then we remove both options and get $G\equiv_\mathcal{E} \bs 0$.
\end{enumerate}

\end{theorem}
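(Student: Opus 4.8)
The plan is to deduce both parts from the Subordinate Comparison Theorem~\ref{thm:comparisone}, reducing everything to one strategic fact. Fix notation: let $B\in A^{\mathcal R}$ be the Left-end through which $A$ reverses, so $G\geq_\mathcal{E} B$ and $B^{\mathcal L}=\emptyset$, and write $H=\left\{\GL\setminus\{A\}\mid\GR\right\}$. Since a universe is option-closed, $B\in\mathcal E$; in part~(1) the hypothesis $H\in\mathcal E$ is precisely what licenses every application of the outcome definitions and of Theorem~\ref{thm:comparisone} to $H$. I would first do the outcome bookkeeping. Because $B$ is a Left-end and $G\geq_\mathcal{E} B$, for every Left-end $Z$ we get $o_L(G+Z)\geq o_L(B+Z)=\rm L$ (a sum of Left-ends is a Left-end, in which the mover wins), so $\hat o_L(G)=\rm L$; the conjugate remark, applied to the end-reversible Right option in part~(2), gives $\hat o_R(G)=\rm R$. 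Also $G$ and $H$ have the same Right options, and a Right-end contributes no Right move, so $o_R(G+Y)=o_R(H+Y)$ for every Right-end $Y$, whence $\hat o_R(G)=\hat o_R(H)$; together with $\hat o_L(G)=\rm L$ and the fact that ``$A$ is not fundamental'' is, given $\hat o_L(G)=\rm L$, exactly the statement $\hat o_L(H)=\rm L$, this gives $\hat o(G)=\hat o(H)$.

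For part~(1), $G\geq_\mathcal{E} H$ is immediate from Theorem~\ref{thm:comparisone}: $\hat o(G)=\hat o(H)$, and every option of $H$ is literally an option of $G$. For $H\geq_\mathcal{E} G$, all instances of the two option conditions of Theorem~\ref{thm:comparisone} are immediate except the instance of condition~2 at $G^L=A$, which demands a certificate that $A$ may be discarded; I would supply it by proving $H\geq_\mathcal{E} B$ and then invoking $B\in A^{\mathcal R}$. To obtain $H\geq_\mathcal{E} B$, apply Theorem~\ref{thm:comparisone} once more: its condition~2 is vacuous since $B^{\mathcal L}=\emptyset$; its condition~1, $\hat o(H)\geq\hat o(B)$, follows from $\hat o_L(H)=\rm L$ and $\hat o_R(H)=\hat o_R(G)\geq\hat o_R(B)$ (the last inequality from $G\geq_\mathcal{E} B$); and its condition~3 is, because the Right options of $H$ and $G$ (and their Left options) coincide, \emph{word for word} the second conclusion of the Common Normal Part Theorem~\ref{thm:commonnormalpart} applied to $G\geq_\mathcal{E} B$. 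Hence $H\geq_\mathcal{E} B$, hence $H\geq_\mathcal{E} G$, hence $G\equiv_\mathcal{E} H$.

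For part~(2), let $D\in C^{\mathcal L}$ be the Right-end through which $C$ reverses, so $D\geq_\mathcal{E} G$ and $D^{\mathcal R}=\emptyset$; note $\hat o(G)=\mathscr N=\hat o(\bs 0)$ by the bookkeeping above. Since $\Conj G=\left\{\Conj C\mid\Conj A\right\}$ is again of the form $\{\text{end-reversible}\mid\text{end-reversible}\}$ and conjugation reverses $\geq_\mathcal{E}$, it is enough to prove $G\geq_\mathcal{E}\bs 0$ (applied to $G$ it gives $G\geq_\mathcal{E}\bs 0$, applied to $\Conj G$ it gives $\bs 0\geq_\mathcal{E} G$). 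By Theorem~\ref{thm:comparisone}, $G\geq_\mathcal{E}\bs 0$ reduces --- condition~1 holds because $\hat o_L(G)=\rm L$, condition~2 is vacuous, condition~3 asks for a Left option of $C$ that dominates $\bs 0$ --- to proving $D\geq_\mathcal{E}\bs 0$. But $D$ is a Right-end, so when Theorem~\ref{thm:comparisone} is applied to $D$ and $\bs 0$ both option conditions are vacuous, and condition~1 reads $\hat o_L(D)=\rm L$, which holds since $D\geq_\mathcal{E} G$ forces $\hat o_L(D)\geq\hat o_L(G)=\rm L$. Thus $D\geq_\mathcal{E}\bs 0$, completing $G\equiv_\mathcal{E}\bs 0$.

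I expect the crux to be the $H\geq_\mathcal{E} G$ direction of part~(1): deleting a Left option is not covered by the hand-tying half of Subordinate Comparison, so the reversibility hypothesis must be converted --- via $G\geq_\mathcal{E} B$ together with the structural output of the Common Normal Part Theorem --- into the explicit inequality $H\geq_\mathcal{E} B$ that certifies the deletion. The rest is routine once the $\hat o$-bookkeeping is in place, and the split into two cases is unavoidable because $\left\{\GL\setminus\{A\}\mid\GR\right\}$ need not lie in $\mathcal E$ when $A$ is the only Left option, which is exactly the regime of part~(2).
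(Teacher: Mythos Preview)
Your proof is correct and takes a genuinely different route from the paper's.

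The paper proves part~(1) by a direct outcome comparison: fixing $X\in\mathcal E$ and arguing by induction on its rank that $o(G+X)=o(H+X)$, invoking the hand-tying principle (Lemma~\ref{lem:greediness}) for $G\geq_\mathcal{E}H$ when $|\GL|>1$, and the Weak Avoidance Property (Lemma~\ref{lem:WeakAvoidanceProperty}) to handle the case where $A+X$ is Left's only winning move. For part~(2) the paper reuses this argument twice, passing through the intermediate game $\{\;\mid C\}$ which may lie outside $\mathcal E$; the paper explicitly notes that its outcome argument does not require $H\in\mathcal E$, and this is what makes the two-step reduction to $\bs 0$ go through. By contrast, you work entirely inside the Subordinate Comparison framework: for part~(1) you reduce $H\geq_\mathcal{E}G$ to $H\geq_\mathcal{E}B$, and then observe that condition~3 of Theorem~\ref{thm:comparisone} for the pair $(H,B)$ is literally conclusion~2 of Theorem~\ref{thm:commonnormalpart} for the pair $(G,B)$, since $H^{\mathcal R}=G^{\mathcal R}$; for part~(2) you use conjugate symmetry to reduce to $G\geq_\mathcal{E}\bs 0$, which collapses to $D\geq_\mathcal{E}\bs 0$, where both option conditions are vacuous. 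Your argument never leaves $\mathcal E$ and never invokes Lemma~\ref{lem:WeakAvoidanceProperty}; the price is full reliance on the absolute-CGT machinery behind Theorems~\ref{thm:commonnormalpart} and~\ref{thm:comparisone}. The paper's approach is more elementary in that sense, and yields the slightly stronger observation that the equivalence in~(1) persists even when $H\notin\mathcal E$ --- which is exactly what it exploits for~(2). Your approach is cleaner bookkeeping and makes the role of the reversibility hypothesis (supplying the certificate $H\geq_\mathcal{E}B$) very transparent.
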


\begin{proof} (1) Consider  $A\in \GL$, $B\in A^\mathcal R$  as in the
statement of the theorem, with $B=\left\{\;\mid  \BR \right\}$ and $G\geq_\mathcal{E} B$.
Note that,  since $B$ is a Left-end, its strong left outcome is $\rm{L}$, so  $G\geq_\mathcal{E} B$ implies that $\hat{o}_L(G)=\rm{L}$.

Let  $H=\left\{ \GL\setminus \left\{A\right\} \mid  G^\mathcal{R}\right\}$; by assumption, $H\in \mathcal{E}$ (note that $H\not \in \mathcal{E}$ is only possible, but not guaranteed, if $|\GL|=1$). Let $X\in \mathcal{E}$. We prove by induction on the rank of $X$ that $o(H+X)= o(G+X)$. 

Suppose Left wins $H+X$. If $|\GL|=1$, then Left has no move in $H$, so the winning move must be to $H+X^L$. By induction, $G+X^L$ is also a winning move, where in the base case of $X^L=\emptyset$, Left wins $G+X$ because the strong left outcome of $G$ is $\bs{L}$. If  $|\GL|>1$, then $G\geq_\mathcal{E} H$ by the hand-tying principle (Lemma \ref{lem:greediness}). So either way we have 
$o(G+X)\geq o(H+X)$. 

Next, suppose Left wins $G+X$.
If Left's winning move in $G+X$ is $G'+X$ where $G'\in \GL\setminus\left\{A\right\}$, then this is also a left winning move in $H+X$.
If Left's winning move in $G+X$ is $G+X^L$, then by induction $o(H+X^L)\geq o(G+X^L)$ and so $H+X^L$ is a left winning move in $H+X$.
The remaining case is if $A+X$ is the only winning move in $G+X$. In this case, $X$ must be a Left-end; else by Lemma~\ref{lem:WeakAvoidanceProperty}, there would exist a winning move $G+X^L$, a contradiction. Now, if $X$ is a Left-end, 
then $|\GL|=1$ means $H$ is also a Left-end, so Left wins $H+X$. If $|\GL|>1$,
since $B+X$ is a Left-end we know $\hat{o}_L(B)=\rm{L}$, and then since $G\geq_\mathcal{E} B$, we have $\hat{o}_L(G) =\rm{L}$. But then we must have $\hat{o}_L(H) =\rm{L}$; else $A$ would be a fundamental Left option. So Left wins $H+X$. 
Thus, we have $o(H+X)\geq o(G+X)$, and so $o(H+X)=o(G+X)$ for all $X\in \mathcal{E}$, which means $G\equiv_\mathcal{E} H$. 
Notice that we have this equivalence even if $H\not \in \mathcal{E}$ (our arguments do not require it), but we will only apply the simplification if $H$ stays in $\mathcal{E}$.

(2) If $G=\left\{A\mid C\right\}$, both options end-reversible, then the exact argument above shows $G \equiv_\mathcal{E} \left\{\; \mid C\right\} \equiv_\mathcal{E} \left\{\; \mid \; \right\}$, where in this case of `simultaneous' removal, we do not care if the game  $\left\{\; \mid C\right\}$ is not in  $\mathcal{E}$.
\end{proof}

Next we have a general structural simplification for any end-reversible Left option $A$; in particular this allows us to simplify end-reversible options that do not satisfy Theorem \ref{thm:atomice}, either because they are fundamental or because they are lone Left options whose removal bumps the game out of $\mathcal{E}$.

\begin{lemma} \label{lem:atomice}
Let $G\in \mathcal{E}$. If $A\in\GL$ is reversible through a Left-end, say $E=\left\{\; \mid E^\R \right\}$, then 
$G\equiv_\mathcal{E}  \left\{\left\{\; \mid  E\right\},\GL\setminus\left\{A\right\}\mid \GR \right\}$.
\end{lemma}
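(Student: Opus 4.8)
The plan is to mimic the structure of the proof of Theorem~\ref{thm:atomice}(1), but now with the substitution option $E' := \left\{\; \mid E\right\}$ replacing $A$ instead of simply deleting $A$. Set $H = \left\{\left\{\; \mid  E\right\},\GL\setminus\left\{A\right\}\mid \GR \right\}$; note $H\in\mathcal{E}$ automatically, since the new Left option $\left\{\; \mid E\right\}$ is itself a dead-ending game (its only follower besides itself is the dead Left-end $E$... actually $E$ need not be dead, but $E\in\mathcal E$ as a follower of $G$, and $\left\{\;\mid E\right\}$ has all its end-followers coming from $E$, hence is dead-ending). The goal is $o(G+X)=o(H+X)$ for all $X\in\mathcal E$, by induction on $\rank(X)$.

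First I would record the basic facts: $B:=E$ satisfies $G\geq_\mathcal E E$ and $E^{\mathcal L}=\emptyset$, so $\hat o_L(G)=\mathrm L$ (since $\hat o_L(E)=\mathrm L$). For the direction $o(G+X)\geq o(H+X)$: suppose Left wins $H+X$ first. If the winning move is $G'+X$ with $G'\in\GL\setminus\{A\}$, it is available in $G+X$. If it is $H+X^L$, use induction. The new case is the winning move $\left\{\;\mid E\right\}+X$: then it is Right's turn, Right can move to $E+X$; but $E+X$ is a Left-end, so if Left wins $\left\{\;\mid E\right\}+X$ then in particular Left wins $E+X$ going second for whatever Right does — hence from $G+X$, Left (to move, having moved to $A+X$) can reverse: $A+X$ has Right option $B+X=E+X$, and since $G\geq_\mathcal E E$, a Left win in $E+X$ yields a Left win in $G+X$; but wait, this shows Left playing \emph{second} wins after $A$, not that Left playing first wins $G+X$. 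The cleaner route: $\left\{\;\mid E\right\}+X \leq_\mathcal E G+X$ is exactly the content that $H\leq_\mathcal E G$ would need, so instead I would prove directly the two inequalities $H\leq_\mathcal E G$ and $G\leq_\mathcal E H$ as in Theorem~\ref{thm:nonatomic}, isolating a claim analogous to ``Claim 1'' there: namely $H\geq_\mathcal E E$, i.e. $\left\{\;\mid E\right\}$ as a Left option of $H$ lets Left simulate $E$.

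Here is the intended split. \textbf{Claim:} $H\geq_\mathcal E E$. This holds because from $E+X$ (Left to move, $X$ with a Left option — base case $X$ a Left-end is trivial since then $E+X$ is a Left-end and so Left wins, and $\left\{\;\mid E\right\}+X$ is also a Left-end... needs care) Left's moves only go to $E+X^L$, matched in $H+X$ by $\left\{\;\mid E\right\}+X$ then Right forced (or not) — actually Left in $H+X$ can move to $\left\{\;\mid E\right\}+X$, and then whatever Right does leads to a position dominated appropriately; and for Right-first, $H^R+X$ is a Right option of $G+X$ and $G\geq_\mathcal E E$. Then: for $G\geq_\mathcal E H$, the only non-routine case is when Left's winning move in $H+X$ is $\left\{\;\mid E\right\}+X$, and the Claim plus $G\geq_\mathcal E E$ gives a Left win in $G+X$. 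For $G\leq_\mathcal E H$, the only non-routine case is when Left's winning move in $G+X$ is $A+X$; then Right replies $B+X=E+X$, Left wins $E+X$, and by the Claim Left wins $H+X$. Routine cases (Left moving in $X$, or Right moving anywhere) go through by induction exactly as in the proofs of Theorems~\ref{thm:nonatomic} and \ref{thm:atomice}(1). I would also invoke Lemma~\ref{lem:WeakAvoidanceProperty} and Lemma~\ref{lem:greediness} where convenient to handle the $|\GL|=1$ versus $|\GL|>1$ bookkeeping, though unlike Theorem~\ref{thm:atomice}(1) the hand-tying principle is not needed for $o(G+X)\ge o(H+X)$ since $H$ retains a Left option in all cases.

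The main obstacle I anticipate is the base case and the precise statement of the Claim $H\geq_\mathcal E E$: when $X$ is a Left-end, $E+X$, $\left\{\;\mid E\right\}+X$ and $G+X$ are all Left-ends, so Left wins all three going first; the subtlety is purely when Right moves first into these ends, where one must use that $G\geq_\mathcal E E$ as Right-first-win statements and that $\hat o_L$ controls the Left-end additions — essentially re-running the Theorem~\ref{thm:nonatomic} argument but noting that the ``replacement set'' is the singleton $\{E\}$ wrapped one level down rather than $E^{\mathcal L}$. Once the Claim is set up correctly, the rest is a careful but unsurprising induction parallel to the two theorems already proved.
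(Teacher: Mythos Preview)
Your proposal is essentially correct, though the presentation is more meandering than necessary, and the route differs from the paper's in one key respect.

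For the direction $G\geq_\mathcal{E} H$, your final argument is the same as the paper's: if Left's winning move in $H+X$ is $\{\;\mid E\}+X$, then Right can reply to $E+X$, so Left wins $E+X$ playing first, and $G\geq_\mathcal{E} E$ gives $o_L(G+X)=\mathrm{L}$. Note that your Claim $H\geq_\mathcal{E} E$ is \emph{not} needed here---only $G\geq_\mathcal{E} E$ is. Your earlier confusion (``Left wins $E+X$ going second'') was just a slip: after Left moves to $\{\;\mid E\}+X$ and Right replies to $E+X$, it is Left to move in $E+X$.

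For the direction $H\geq_\mathcal{E} G$, you genuinely diverge from the paper. You propose to prove the auxiliary Claim $H\geq_\mathcal{E} E$ (paralleling Claim~1 of Theorem~\ref{thm:nonatomic}) and then argue: if $A+X$ is a winning Left move in $G+X$, Right can reply to $E+X$, Left wins $E+X$ going first, hence by the Claim Left wins $H+X$ going first. This is valid, and the Claim itself can be proved cleanly by induction on $\rank(X)$ (your sketch is muddled but fixable: for $o_L$, the base case uses that $\{\;\mid E\}+X$ is a dead Left-end whenever $X$ is; for $o_R$, the $H^R=G^R$ case reduces to $G\geq_\mathcal{E} E$). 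The paper instead dispenses with the Claim entirely by invoking Lemma~\ref{lem:WeakAvoidanceProperty}: if $A+X$ is the \emph{only} winning move, then $X$ must be a Left-end, and then $\{\;\mid E\}+X$ is itself a (dead) Left-end, so it is an immediate winning Left move in $H+X$. The paper's route is shorter because the Weak Avoidance Property has already absorbed the work your Claim would do; your route is closer in spirit to Theorem~\ref{thm:nonatomic} and would be the natural choice had Lemma~\ref{lem:WeakAvoidanceProperty} not been isolated. Neither the hand-tying principle nor the $|\GL|=1$ versus $|\GL|>1$ split is needed in either approach.
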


\begin{proof}
 Let $H=\left\{ \left\{\; \mid  E\right\},\GL\setminus\left\{A\right\}\mid {\GR}\right\}$, and consider any $X\in \mathcal{E}$. Note that $H\in \E$, since $G,E\in \E$, and suppose that $o_L(H+X)=\rm L$.
If $\left\{\; \mid  E\right\}+X$ is a winning move for Left in $H+X$, then Left has a good response to Right's move to $E+X$, and this means that Left can win also in the game $G+X$, because, by assumption,  $G\geq_\mathcal{E} E$. Any other winning option for Left in $H+X$ is also available in $G+X$, so $o_L(G+X)=\rm L$. Observe that any winning option for Right in $G+X$ is also available in $H+X$, so trivially $o_R(H+X)=o_R(G+X)$. Altogether, this shows that $G\geq_\mathcal{E} H$. 

We will prove by induction on the rank of $X$ that we also have $o(H+X)\geq o(G+X)$.

Suppose Left wins $G+X$, playing first. 
If Left's winning move in $G+X$ is $G+X^L$, then by induction $o(H+X^L)\geq o(G+X^L)$ and so $H+X^L$ is a left winning move in $H+X$.
If Left's winning move in $G+X$ is $G'+X$ where $G'\in \GL\setminus\left\{A\right\}$, then this is also a left winning move in $H+X$.
The remaining case is if $A+X$ is the only winning move in $G+X$. In this case, $X$ must be a Left-end; else by Lemma~\ref{lem:WeakAvoidanceProperty}, there would exist a winning move $G+X^L$, a contradiction. But if $X$ is a Left-end, then $\left\{\; \mid  E\right\}+X$ is a winning move for Left, because in $\mathcal{E}$ Left-ends remain Left-ends. Thus Left wins $H+X$.
\end{proof}

In fact, we can do better than  Lemma \ref{lem:atomice}, and simplify instead to a game of the same form but with $B$ replaced by a position with smaller (or at least not-larger) rank. This is shown next in the Substitution Theorem (Theorem \ref{thm:substitutee}). To get there, we use the perfect murder games --- specifically the fact that they are worse than any other Left-end --- to construct another substitution for end-reversible options. Like Lemma \ref{lem:atomice}, this substitution applies to all end-reversible options, but is only useful for those options that are not removed by Theorem \ref{thm:atomice}: that is, when $A$ is fundamental (and therefore $|\GL|>1$), or when $A$ is a lone option and $\left\{\; \mid \GR\right\} \not \in \mathcal{E}$. 

\begin{theorem}[Substitution Theorem for $\mathcal{E}$]\label{thm:substitutee}
Let $G\in \mathcal{E}$. If   $A\in\GL$ is end-reversible, then  there exists a smallest nonnegative integer $n$ such that $G\geq_\mathcal{E} M_n$ and $G \equiv_\mathcal{E} \left\{\left\{\; \mid M_n\right\}, \GL\setminus\left\{A\right\} \mid \GR \right\} $.
\end{theorem}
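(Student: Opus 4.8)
The plan is to combine Lemma~\ref{lem:atomice}, which already tells us that $G\equiv_\mathcal{E}\left\{\left\{\;\mid E\right\},\GL\setminus\{A\}\mid\GR\right\}$ where $E$ is the Left-end through which $A$ reverses, with Theorem~\ref{thm:pm} (or its corollary Theorem~\ref{thm:so}), which says that every nonzero Left-end is $\geq_\mathcal{E}$ the perfect murders of all ranks at least its own. First I would establish existence of the integer $n$: since $G\geq_\mathcal{E} B$ for the reversing Left-end $B=\{\;\mid\BR\}$, and $B$ itself dominates $M_k$ for $k=\rank(B)$ by Theorem~\ref{thm:pm} (or, if $B=\bs 0$, take $n=0$ since $G\geq_\mathcal{E}\bs 0$ as $A$ reverses through $\bs 0$), transitivity of $\geq_\mathcal{E}$ gives $G\geq_\mathcal{E} M_k$, so the set $\{m\geq 0: G\geq_\mathcal{E} M_m\}$ is nonempty; let $n$ be its least element.

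Next I would show $G\equiv_\mathcal{E}\left\{\left\{\;\mid M_n\right\},\GL\setminus\{A\}\mid\GR\right\}$ directly, mirroring the proof of Lemma~\ref{lem:atomice}. Write $H=\left\{\left\{\;\mid M_n\right\},\GL\setminus\{A\}\mid\GR\right\}$; note $H\in\mathcal{E}$ since $G,M_n\in\mathcal{E}$. For $G\geq_\mathcal{E} H$: every Right option of $H$ is a Right option of $G$, so the Right-outcome side is immediate; on the Left side, if Left wins $H+X$ via $\left\{\;\mid M_n\right\}+X$, then after Right's forced reply to $M_n+X$ Left still wins, and since $G\geq_\mathcal{E} M_n$ by choice of $n$, Left wins $G+X$; any other Left winning move in $H+X$ is available in $G+X$. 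For $H\geq_\mathcal{E} G$ I would induct on $\rank(X)$ exactly as in Lemma~\ref{lem:atomice}: the only delicate case is when $A+X$ is the unique Left winning move in $G+X$, which by Lemma~\ref{lem:WeakAvoidanceProperty} forces $X$ to be a Left-end (since $\XL\neq\emptyset$ would give a winning $G+X^L$), and then $\left\{\;\mid M_n\right\}+X$ is a Left winning move in $H+X$ because Left-ends stay Left-ends under disjunctive sum in $\mathcal{E}$, so Left wins $H+X$ by passing Right into $M_n+X$ and then winning that Left-end position — here I must check that $M_n+X$ really is won by Left going first, which holds because $M_n+X$ is a Left-end when $X$ is, so Left (to move) has no move and wins. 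The base case $X=\bs 0$ follows since $\hat{o}_L(G)=\rm L$ (as $G\geq_\mathcal{E} B$ with $B$ a Left-end) forces $o_L(G)\leq$ handled, and more simply because $\left\{\;\mid M_n\right\}$ is a Left-end so Left wins $H$ going first.

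The reason $M_n$ is a genuine improvement over the $E$ of Lemma~\ref{lem:atomice} — and the reason the "smallest" clause is not vacuous — is that $M_n$ has rank $n\leq\rank(B)=\rank(E)$; this is what makes the substitution a true simplification rather than a lateral move, and it is essential for the canonical-form argument of Section~\ref{sec:uniqueness}. I would remark on this but it requires no extra proof beyond Theorem~\ref{thm:pm}. The main obstacle I anticipate is the careful bookkeeping in the "$A+X$ is the only Left winning move" case: one must be sure that replacing $A$ by $\left\{\;\mid M_n\right\}$ preserves the ability to win against \emph{every} Left-end $X$, not just $M_{n-1}$, which is exactly where $G\geq_\mathcal{E} M_n$ (and transitively $G\geq_\mathcal{E} M_m$ for all $m\geq n$ by Lemma~\ref{lem:pm}) does the work — so I would make explicit that for an arbitrary Left-end $X$ of rank $r$, Theorem~\ref{thm:so} reduces the relevant outcome to that of $G+M_{\max(n,r-1)}$, which Left wins since $G\geq_\mathcal{E} M_{\max(n,r-1)}$. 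Everything else is a routine adaptation of the proof of Lemma~\ref{lem:atomice}.
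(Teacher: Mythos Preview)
Your approach is correct but takes a different route from the paper. The paper introduces an auxiliary game $G'=\left\{\left\{\;\mid M_n\right\},\GL\mid\GR\right\}$ (that is, $G$ with the extra Left option $\left\{\;\mid M_n\right\}$ adjoined), observes via hand-tying (Lemma~\ref{lem:greediness}) that $G'\geq_\mathcal{E}G$, and hence that both $A$ and $\left\{\;\mid M_n\right\}$ are end-reversible in $G'$; it then checks that neither is fundamental in $G'$ (because $\hat o_L(H)=\hat o_L(G)=\rm L$), and applies Theorem~\ref{thm:atomice}(1) twice to remove each in turn, yielding $G\equiv_\mathcal{E}G'\equiv_\mathcal{E}H$. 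You instead re-run the outcome-by-outcome induction of Lemma~\ref{lem:atomice} with $M_n$ substituted for $E$, which works because the only properties of $E$ used there are that $E$ is a Left-end and $G\geq_\mathcal{E}E$, both of which hold for $M_n$ by construction. The paper's route is shorter and avoids duplicating the case analysis; your route is more self-contained, needing neither hand-tying nor Theorem~\ref{thm:atomice}.

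One small comment: your final paragraph, invoking Theorem~\ref{thm:so} to reduce ``the relevant outcome to that of $G+M_{\max(n,r-1)}$'', is unnecessary and a bit off-target. In the delicate case you have already correctly argued that $X$ is a Left-end, and then the move $\left\{\;\mid M_n\right\}+X$ wins for Left in $H+X$ simply because the sum of dead Left-ends is a dead Left-end: after any Right reply the position is still a Left-end, Left has no move, and Left wins in mis\`ere. No appeal to $G\geq_\mathcal{E}M_m$ for larger $m$ is needed at that point; that inequality is used only on the $G\geq_\mathcal{E}H$ side (when Left's winning move in $H+X$ is $\left\{\;\mid M_n\right\}+X$ and you transfer the win to $G+X$ via $G\geq_\mathcal{E}M_n$). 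Also, the garbled clause ``forces $o_L(G)\leq$ handled'' should be cleaned up; the correct base-case observation is just that $\left\{\;\mid M_n\right\}$ is a non-zero Left-end, so $o_L(H)=\rm L$.
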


\begin{proof}
Suppose that  $A\in\GL$ is end-reversible through $E=\left\{\; \mid  E^\mathcal{R}\right\}$. Let $k=\rank(E)$.
By assumption, $G\geq_\mathcal{E} E$ and, by Theorem~\ref{thm:pm}, $E\geq_\mathcal{E} M_k$, and thus $G\geq_\mathcal{E}M_k$. 
Since $k$ is a nonnegative integer, the existence part is clear. Let $n$ be the minimum nonnegative integer such that $G\geq_\mathcal{E} M_n$.

Let $H= \left\{\left\{\; \mid M_n\right\}, \GL\setminus\left\{A\right\} \mid \GR \right\} $, and let $G' = \left\{\left\{\; \mid M_n\right\}, \GL \mid \GR \right\} $. By Lemma~\ref{lem:greediness}, we have $G'\geq_\mathcal{E} G$, and so then $G \geq_\mathcal{E} M_n$ and  $G \geq_\mathcal{E} E$ imply that $\left\{\;\mid M_n\right\}$ and $A$ are end-reversible options of $G'$. 

Now, $\hat{o}_L(H)=\rm{L}$, because Left moving first in any $H+X$, for $X$ a Left-end, wins by moving to the Left-end $\left\{\; \mid M_n\right\}+X$. 
Likewise, $\hat{o}_L(G)=\rm{L}$ because $G\geq_\mathcal{E} E$ and $\hat{o}_L(E)=\rm{L}$.
Since $H$ and $G$ are the games $G'$ with $A$ removed and $G'$ with $\left\{\; \mid M_n\right\}$ removed, respectively, this shows that neither $A$ nor $\left\{\; \mid M_n\right\}$ is fundamental in $G'$. Thus by Theorem \ref{thm:atomice}, we get $G'\equiv_\mathcal{E} H$ and $G'\equiv_\mathcal{E} G$. This gives $G\equiv_\mathcal{E} H$, as required.
\end{proof}

Note that the rank of $\left\{ \; \mid E\right\}$ is greater than or equal to the rank of $\left\{ \; \mid M_n\right\}$, because $n$ is chosen to be minimum. So Theorem \ref{thm:substitutee} will usually give a substitution that is  simpler than the substitution in Lemma \ref{lem:atomice}. After making the substitution with $\left\{\; \mid M_n\right\}$, we observe that there is only one position in $\mathcal{E}$ of the particular form $\left\{A\mid C\right\}$ from Theorem \ref{thm:atomice}.

It is interesting to note that, after applying the Substitution Theorem above, there is actually only one game of the form described in Theorem \ref{thm:atomice} (2).
If $G=\left\{A\mid C\right\}\in \mathcal{E}$ with both $A$ and $C$ end-reversible, then by Theorem \ref{thm:substitutee}, 
$$G\equiv_\mathcal{E}\left\{ \left\{\; \mid M_n\right\} \mid \left\{-M_k\mid \;\right\} \right\}.$$ Since the  options are reversible, we have $-M_k\geq_\mathcal{E} G\geq_\mathcal{E} M_n$; but $-M_k\geq_\mathcal{E} M_n$ is only possible if both are zero, because otherwise the former is Right-win and the latter is Left-win. So $k=n=0$ and $G=\left\{ \left\{\; \mid \bs 0\right\} \mid \left\{\bs 0\mid \;\right\} \right\} = \left\{-\bs 1\mid \bs 1\right\}$. (This game would then reduce to \bs 0 by Theorem \ref{thm:atomice}.)

These are all of the reductions for the dead-ending universe.
To summarize, there are four types of reductions in  $\mathcal{D}$ \cite{DorbecRSS} and five types in  $\mathcal{E}$ (new). The first two apply in any $\U$, including $\mathcal{D}$ and $\mathcal{E}$:
\begin{enumerate}
  \item Remove dominated options. (Theorem \ref{thm:domination})
  \item Reverse open-reversible options. (Theorem \ref{thm:nonatomic})
\end{enumerate}
In $\mathcal{D}$ we have three additional reductions, for end-reversible options \cite{DorbecRSS}:
\begin{enumerate}
\item Remove end-reversible options, as long as there is another winning option
  \item[4.] Replace end-reversible options by $*$. (Theorem \ref{thm:atomicd})
  \item[5.]  Replace $\left\{* \mid *\right\}$ by \bs 0. (Theorem \ref{thm:substituted})
\end{enumerate}
And in $\mathcal{E}$ we have three additional reductions, for end-reversible options:
\begin{enumerate}
  \item[3.] Remove non-fundamental end-reversible options, including removal of lone options as long as the result is in $\mathcal{E}$. (Theorem~\ref{thm:atomice} (1))
  \item[4.] Simultaneously remove lone left and lone Right end-reversible options; i.e., replace $\left\{A\mid C\right\}$ with \bs 0 if both $A$ and $C$ are end-reversible. (Theorem~\ref{thm:atomice} (2))
  \item[5.] Replace other end-reversible options (those that are fundamental, or those whose removal gives a game not in $\mathcal{E}$) by $\left\{\; \mid M_n\right\}$ for Left options, or by $\left\{-M_n \mid \; \right\}$ for right. (Theorem~\ref{thm:substitutee})
\end{enumerate}

We will call a game {\em reduced} if the above reductions have all been applied, as described below, and in Section \ref{sec:uniqueness} we will show that the simplest reduced form of a game in $\mathcal{E}$ is unique. 

\begin{definition}\label{def:reducedform}
A game  $G\in \mathcal{E}$ is said to be \textit{reduced in $\mathcal{E}$} if none of Theorems~\ref{thm:domination},
\ref{thm:nonatomic},~\ref{thm:atomice}, or \ref{thm:substitutee} can be applied to
$G$ to obtain an equivalent game in $\mathcal{E}$ with different sets of options. %Note that the reduced game must be in $\mathcal{E}$, and so Theorem \ref{thm:substitutee} can only be applied if $\left\{\; \mid \GR\right\} \in  \mathcal{E}$.
A game  $G\in \mathcal{D}$ is said to be \textit{reduced in $\mathcal{D}$} if none of Theorems~\ref{thm:domination},
\ref{thm:nonatomic},~\ref{thm:atomicd}, or~\ref{thm:substituted} can be applied to
$G$ to obtain an equivalent game in $\D$ with different sets of options.
\end{definition}

\subsection{Uniqueness and simplicity of reduced forms}\label{sec:uniqueness}

We are now able to prove the existence of a unique and simplest reduced form for a congruence class of games in the dead-ending universe. This was shown for the dicot universe in \cite{DorbecRSS}. We will use  $\cong$ to indicate that two games are ``identical''; that is, $G\cong H$ if the positions $G$ and $H$ have identical game tree structure.

First, we note the following result from \cite{MilleyRenault}, which says that all ends are invertible modulo $\mathcal{E}$.

\begin{theorem}\label{thm:endinverses} {\em \cite{MilleyRenault}}
If $G\in \mathcal{E}$ is an  end, then $G$ is invertible and the inverse is the conjugate, i.e. 
$$G+\Conj{G}\equiv_\mathcal{E}\bs 0.$$
\end{theorem}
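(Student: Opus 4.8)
The plan is to prove $G + \Conj{G} \equiv_\mathcal{E} \bs 0$ for an end $G$ by induction on the rank of $G$, using the Subordinate Comparison test for $\mathcal{E}$ (Theorem~\ref{thm:comparisone}) to establish both $G + \Conj{G} \geq_\mathcal{E} \bs 0$ and $G + \Conj{G} \leq_\mathcal{E} \bs 0$. By symmetry of conjugation it suffices to prove one inequality, say $G + \Conj{G} \geq_\mathcal{E} \bs 0$; the other follows by conjugating everything. Without loss of generality take $G$ to be a dead Left-end (if $G$ is a dead Right-end, interchange the roles of Left and Right throughout). So $G = \left\{\; \mid \GR\right\}$ with every $G^R$ again a dead Left-end, and $\Conj{G} = \left\{\Conj{\GR}\mid\; \right\}$ with every $\Conj{G^R}$ a dead Right-end. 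The base case $G = \bs 0$ is immediate since $\bs 0 + \bs 0 \equiv_\mathcal{E} \bs 0$.

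For the inductive step I would verify the three conditions of Theorem~\ref{thm:comparisone} for $G + \Conj{G} \geq_\mathcal{E} \bs 0$. Condition~1 requires $\hat{o}(G + \Conj{G}) \geq \hat{o}(\bs 0) = \mathscr{N}$, i.e. $\hat{o}_L(G + \Conj{G}) = \rm L$; this holds because $G$ is a dead Left-end, so in $G + \Conj{G} + Y$ with $Y$ any Left-end, Left has no move (every summand is a Left-end) and wins going first. Condition~2 is vacuous on the $H = \bs 0$ side since $\bs 0$ has no Left option; on the $G+\Conj{G}$ side there are no Left options either (it is a Left-end), so condition~2 holds trivially. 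Condition~3 requires that for every Right option of $G + \Conj{G}$, either it dominates some Right option of $\bs 0$ (none exist) or it has a Left option dominating $\bs 0$. The Right options are $G^R + \Conj{G}$ and $G + \Conj{G^R}$. The first, $G^R + \Conj{G}$, is still a Left-end (dead Left-end plus Left-end), so it has no Left options --- this looks like a problem. The resolution: I should instead run the Common Normal Part / subordinate test more carefully, or observe that $G^R + \Conj{G}$ is handled by showing $G + \Conj{G^R} \geq_\mathcal{E} \bs 0$ via induction (since $\Conj{G^R}$ has a Left option $\Conj{G^R}{}^L = \Conj{G^{RR}} \in \Conj{(G^R)^\mathcal{R}}$, wait --- $G^R$ is a dead Left-end so $\Conj{G^R}$ is a dead Right-end with Left options $\Conj{(G^R)^\mathcal{R}}$).

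Here is the correct shape of the argument for condition~3. Take a Right option of $G+\Conj{G}$. If it is $G + \Conj{G^R}$: then $\Conj{G^R}$ has Left options $\Conj{(G^R)^R}$ for $(G^R)^R \in (G^R)^\mathcal{R}$ --- but if $G^R = \bs 0$ this set is empty. When $\Conj{G^R}$ has a Left option $\Conj{(G^R)^R}$, I claim $G + \Conj{(G^R)^R} \geq_\mathcal{E} \bs 0$ is not quite what I want; rather I want to exhibit a Left option of $G + \Conj{G^R}$, namely $G + \Conj{(G^R)^R}$, and check $G + \Conj{(G^R)^R} \geq_\mathcal{E} \bs 0$... this is not an inductive instance of the conjugate identity. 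The cleaner route, which I would actually pursue: prove directly by induction on $\rank(G)$ that \emph{Left wins $G + \Conj{G} + X$ going second whenever Left wins $X$ going second, and Right wins $G+\Conj{G}+X$ going first whenever Right wins $X$ going first} --- i.e. that adding $G + \Conj{G}$ does not change outcomes, by a pairing/mirroring strategy: whenever the opponent moves $G^R \mapsto$ something in one copy, respond with the conjugate move in the other copy, reducing to a smaller instance $G' + \Conj{G'} + X'$. This strategy-stealing argument is exactly the normal-play proof that $G + \Conj{G}$ is a second-player win, adapted to mis\`ere using the dead-ending hypothesis to control the endgame: the only subtlety is the terminal positions, where the dead-ending property guarantees that when one copy is exhausted the mirror copy is too, so the parity works out.

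The main obstacle I anticipate is precisely this endgame bookkeeping. In normal play the mirroring argument is clean because ``last move wins'' makes parity automatic; in mis\`ere play one must check that the player forced to move first on $G+\Conj{G}$ (once $X$ is exhausted) is genuinely the loser, and this is where the \emph{dead} part of dead-ending is essential --- a non-dead end could let a player ``escape'' the mirror. I expect the actual paper cites or re-derives Theorem~\ref{thm:endinverses} from \cite{MilleyRenault} rather than reproving it, so my proposed proof is really a reconstruction of that earlier argument; if I were writing it fresh I would lean on Theorem~\ref{thm:comparisone} with the inductive hypothesis applied to $G^R + \Conj{G^R} \equiv_\mathcal{E} \bs 0$ and the hand-tying principle (Lemma~\ref{lem:greediness}) to absorb the extra options introduced by the mirror, checking conditions~2 and~3 by matching each option on one side with its conjugate on the other.
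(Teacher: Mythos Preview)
The paper does not prove this result; it merely quotes it from \cite{MilleyRenault}. Your guess to that effect is correct, so what you have written is a reconstruction rather than a comparison.

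Your first approach via Theorem~\ref{thm:comparisone} contains a recurring slip that derails the argument: if $G$ is a dead Left-end then $\Conj{G}=\{\Conj{\GR}\mid\;\}$ is a dead \emph{Right}-end, not a Left-end. Consequently $G+\Conj{G}$ is a dicot (for $G\neq\bs 0$): its Left options are exactly the $G+\Conj{G^R}$ and its Right options are exactly the $G^R+\Conj{G}$. Your listed Right option ``$G+\Conj{G^R}$'' is in fact a Left option, and your claim that $G^R+\Conj{G}$ is a Left-end is false. With this corrected, Condition~3 becomes easy --- the Right option $G^R+\Conj{G}$ has the Left option $G^R+\Conj{G^R}$, and by the inductive hypothesis on $\rank(G)$ this is $\equiv_\mathcal{E}\bs 0$ --- and Condition~2 is vacuous. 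But Condition~1 is \emph{not} immediate: since $\Conj{G}$ is not a Left-end, $G+\Conj{G}+Y$ is not a Left-end, and you must genuinely exhibit a winning first move for Left against every Left-end $Y$. Doing so forces you into exactly the mirroring analysis you later describe, so Theorem~\ref{thm:comparisone} buys nothing here.

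Your ``cleaner route'' is the correct one and is essentially the argument in \cite{MilleyRenault}: show $o(G+\Conj{G}+X)=o(X)$ for all $X\in\mathcal{E}$ by a Tweedledum--Tweedledee strategy, inducting on $\rank(G)$ (and secondarily on $\rank(X)$). The key observation, which you identify, is that only Right can move in $G$ and only Left can move in $\Conj{G}$; whenever the opponent moves in one component, respond with the conjugate move in the other, landing in $G^R+\Conj{G^R}+X$, which by the primary induction has the same outcome as $X$. The dead-end hypothesis guarantees the paired components remain ends throughout, so no player can ``escape'' the mirror --- exactly the endgame bookkeeping you anticipated.
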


The main result is Theorem \ref{thm:uniqred} below. Recall that a game in $\mathcal{E}$ is {\em reduced} if all dominated options have been removed; all open-reversible options have been reversed; all non-fundamental end-reversible options have been removed, unless removal gives a non-dead-ending game; and all other end-reversible options have been replaced by $\left\{\; \mid M(n)\right\}$ (left) or $\left\{-M(n) \mid \; \right\}$ (right). We now show that any two equivalent reduced games are in fact identical.

\begin{theorem}\label{thm:uniqred}
Let $G,H\in \mathcal{E}$. If $G\equiv_\mathcal{E} H$ and both are reduced games, then $G\cong H$.
\end{theorem}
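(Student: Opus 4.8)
The plan is to prove this by induction on the combined rank (or number of nodes) of $G$ and $H$, mirroring the classical uniqueness-of-canonical-form argument in normal play but being careful about ends. Suppose $G\equiv_\mathcal E H$ with both reduced. I would first establish the statement for the trivial cases: if one of $G,H$ is a dead end, then since a nonzero dead Left-end $E$ satisfies $\hat o(E)=\mathscr L$ and is reduced only when it is a ``bare'' dead end (all its followers Left-ends with no further reductions applicable), one argues directly that equivalence forces identical structure; in particular $\bs 0$ is the only reduced game equivalent to $\bs 0$, using that dead ends are invertible (Theorem~\ref{thm:endinverses}) together with Lemma~\ref{lem:dis} to cancel.

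For the main induction, I would take a Left option $H^L\in H^{\mathcal L}$ and show it is matched by an identical Left option of $G$. Applying Theorem~\ref{thm:comparisone} (the subordinate comparison in $\mathcal E$) to $G\geq_\mathcal E H$, condition~2 gives either some $G^L\geq_\mathcal E H^L$, or some $H^{LR}$ with $G\geq_\mathcal E H^{LR}$. The second alternative must be ruled out: if $G\geq_\mathcal E H^{LR}$ then (since also $H\geq_\mathcal E G$) we would get $H\geq_\mathcal E H^{LR}$ where $H^{LR}\in H^{L\mathcal R}$, which says exactly that $H^L$ is reversible through $H^{LR}$ in $H$ --- contradicting that $H$ is reduced, unless the reversing option is a Left-end, in which case Theorem~\ref{thm:atomice} or Theorem~\ref{thm:substitutee} would still apply and change the option set, again contradicting reducedness (here one must check the ``fundamental'' and ``lone option'' bookkeeping: a reduced game has no removable end-reversible option and every surviving end-reversible option already has the canonical shape $\{\,\mid M_n\}$, so the presence of the reversibility witness forces a further reduction). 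Hence some $G^L\geq_\mathcal E H^L$. Symmetrically, using $H\geq_\mathcal E G$ and the same option $G^L$, we get some $H^{L'}\geq_\mathcal E G^L$, so $H^{L'}\geq_\mathcal E G^L\geq_\mathcal E H^L$; since $H$ is reduced it has no dominated options, forcing $H^{L'}=H^L$ and therefore $G^L\equiv_\mathcal E H^L$. Both $G^L$ and $H^L$ are reduced (options of reduced games are reduced --- this needs a short verification that none of the reductions, once unavailable for $G$, could become available for an option, which follows because the reduction criteria are ``local'' modulo $\equiv_\mathcal E$), so by induction $G^L\cong H^L$. Thus every Left option of $H$ appears among the Left options of $G$ and vice versa; the same argument on the Right side (using condition~3) completes the proof that $G\cong H$.

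The main obstacle I expect is handling the end-reversibility cases cleanly: unlike normal play, ``$H\geq_\mathcal E H^{LR}$ with $H^{LR}$ an option of an option'' does not immediately contradict reducedness, because a reduced game is allowed to contain end-reversible options --- they just must already be in the form $\{\,\mid M_n\}$. So I need to argue that the witness $H^{LR}$ produced by Theorem~\ref{thm:comparisone} cannot coexist with $H$ being reduced: if $H^{LR}$ is not a Left-end then open-reversibility (Theorem~\ref{thm:nonatomic}) applies and changes the option set; if $H^{LR}$ is a Left-end then Theorem~\ref{thm:atomice}(1) removes $H^L$ when it is non-fundamental and $\{H^{\mathcal L}\setminus\{H^L\}\mid H^{\mathcal R}\}\in\mathcal E$, and otherwise Theorem~\ref{thm:substitutee} forces $H^L$ to be exactly $\{\,\mid M_n\}$ for the minimal $n$ with $H\geq_\mathcal E M_n$ --- but then $H^{LR}=M_n$ and $H\geq_\mathcal E M_n$ is precisely the data already ``used up'' by that reduction, so no distinct reducing move remains, and the reversibility witness coincides with the canonical one rather than yielding a new one. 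Pinning down this last dichotomy precisely (i.e.\ that a reduced game's end-reversible options are rigidly of the form $\{\,\mid M_n\}$ with $n$ minimal, and that this leaves nothing further to reverse) is the delicate point; everything else is the standard domination-plus-induction bookkeeping.
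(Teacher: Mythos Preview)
Your overall induction skeleton matches the paper's, but there is a genuine gap in the end-reversible case, and it is exactly the point you flag as ``the delicate point'' without resolving.

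Suppose $H^L=\{\,\mid M_n\}$ is an end-reversible option surviving in the reduced $H$. When you apply Theorem~\ref{thm:comparisone} to $G\geq_\mathcal E H$ at this $H^L$, the second alternative $G\geq_\mathcal E H^{LR}=M_n$ \emph{always} holds (since $G\equiv_\mathcal E H\geq_\mathcal E M_n$), and it does \emph{not} contradict reducedness of $H$: the option $H^L$ is already in the substituted form dictated by Theorem~\ref{thm:substitutee}, so no further reduction is triggered. Your observation that ``the reversibility witness coincides with the canonical one'' is correct but unhelpful: it confirms $H^L$ is already simplified, yet it gives you no $G^L$ at all. The comparison theorem is simply silent here, and your plan to ``rule out the second alternative'' cannot succeed for these options. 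The paper handles this case by a completely different mechanism: if $|H^{\mathcal L}|>1$ then $H^L$ must be \emph{fundamental}, so there is a Left-end $X$ for which $H^L+X$ is Left's unique winning move in $H+X$; transporting this via $G\equiv_\mathcal E H$ forces a winning $G^L+X$, and one argues (using the Case~1 correspondence already established for non-end-reversible options) that this $G^L$ must itself be end-reversible, hence of the form $\{\,\mid M_{n'}\}$ with $n'=n$ by minimality. The subcase $|H^{\mathcal L}|=1$ requires a further, rather intricate argument to exclude $G^{\mathcal L}=\emptyset$, exploiting that $\{\,\mid H^{\mathcal R}\}\notin\mathcal E$ and chasing a contradiction through the Right options. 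None of this is ``standard domination-plus-induction bookkeeping.''

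There is also a smaller gap in your Case~1. After obtaining $G^L\geq_\mathcal E H^L$ with $H^L$ non-reversible, you invoke symmetry on $H\geq_\mathcal E G$ at this $G^L$. But the second alternative there is $H\geq_\mathcal E G^{LR}$, and to rule it out you need $G^L$ to be non-reversible in $G$---which you have not shown. The paper proves this separately: if $G^L$ were end-reversible, so $G^L\cong\{\,\mid M_m\}$, then using invertibility of the end $\{\,\mid M_m\}$ (Theorem~\ref{thm:endinverses}) together with $G^L\geq_\mathcal E H^L$ one extracts an $H^{LR}\leq_\mathcal E M_m\leq_\mathcal E H$, contradicting that $H^L$ was non-reversible. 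You need this step before the symmetric application of Theorem~\ref{thm:comparisone} is legitimate.
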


\begin{proof}

We will proceed by induction on the sum of the ranks of $G$ and $H$.
We will exhibit a correspondence $G^{L_i}\equiv_\mathcal{E} H^{L_i}$ and $G^{R_j}\equiv_\mathcal{E} H^{R_j}$
between the options of $G$ and $H$.
By induction, it will follow that $G^{L_i}\cong H^{L_i}$, for all $i$,  and $G^{R_j}\cong H^{R_j}$, for all $j$,
and consequently  $G\cong H$.

For the base case, if $G =\bs 0$ and $H =\bs 0$, then  $G\cong H$.
If $G$ and $H$ are not both zero, then without loss of generality,  assume that there is a Left option $H^L$. We will break the proof into two cases based on whether or not $H^L$ is an end-reversible option.

\begin{description}
\item {\em Case 1:} $H^L$ is not end-reversible.

Note that this means $H^L$ is not reversible at all, since $H$  cannot have open-reversible options if it is reduced.
Since $G \equiv_\mathcal{E} H$, then of course $G\geq_\mathcal{E} H$. Then from Theorem~\ref{thm:comparisone},
there exists a $G^L$ with $G^L\geq_\mathcal{E} H^L$ or there exists a $H^{LR}\leq_\mathcal{E} G$.
Now $H^{LR}\leq_\mathcal{E} G\equiv_\mathcal{E} H$ would contradict that $H^L$ is not reversible.
Thus, there is some $G^L$ with $G^L\geq_\mathcal{E} H^L$.

We claim that this $G^L$ cannot be end-reversible. If it was, then since $G$ is in reduced form, 
$G^L \cong \left\{ \;  \mid M(n)\right\}$
for some nonnegative integer $n$, and  $G\geq_\mathcal{E}M(n)$.
Since $G\equiv_\mathcal{E} H$ we also have $H\geq_\mathcal{E} M(n)$.
Therefore, using the invertibility of  $\left\{ \;  \mid M(n)\right\}$ (Theorem \ref{thm:endinverses}), we have
\[G^L\geq_\mathcal{E} H^L\Leftrightarrow \left\{ \;  \mid M(n)\right\}\geq_\mathcal{E} H^L \Leftrightarrow 0\geq_\mathcal{E}  H^L+\left\{\Conj{M(n)} \mid \;\right\},\]
where the last equivalence is by Lemma~\ref{lem:dis}.
From Theorem~\ref{thm:comparisone} (2), the last inequality implies that
from every left move in  $H^L+\left\{\Conj{M(n)}\mid \;\right\}$, there exists a right move that is less than $0$. There is no right response in $\left\{\Conj{M(n)} \mid \;\right\}$, so this right move must look like $H^{LR}+\Conj{M(n)}\leq_\mathcal{E}0$.
The inequalities
$H\geq_\mathcal{E} M(n)$ and $H^{LR}+\Conj{M(n)}\leq_\mathcal{E}0$ give that $H\geq_\mathcal{E} H^{LR}$, which contradicts that $H^L$ is not reversible. Thus, $G^L$ is not end-reversible, as claimed.

A similar argument for $G^L$ gives a Left option $H^{L'}$ such that $H^{L'}\geq_\mathcal{E} G^L$.
Therefore, $H^{L'}\!\geq_\mathcal{E} G^L\!\geq_\mathcal{E} H^L$. Since there are no dominated options in the reduced game $H$, it must be that
$H^{L'}\!\equiv_\mathcal{E} H^L\!\equiv_\mathcal{E} G^L$. By induction, $H^L\cong G^L$.

The symmetric argument gives that each non-reversible option $H^R$ is identical to some
$G^R$. In conclusion, we have a \emph{pairwise correspondence} between
 options of $G$ and $H$ that are not end-reversible.\\

\item {\em Case 2:} $H^L$ is end-reversible, so $H^L\cong \left\{ \;  \mid M(n)\right\}$ and $H\geq_\mathcal{E} M(n)$. We have two subcases.
\\

{\em Case 2a:} $|\HL|>1$.

In this case, since $H$ is reduced and Theorem~\ref{thm:atomice}  cannot be further applied,
it must be that $H^L$ is a fundamental Left option of $H$.

This means there is a Left-end $X$ such that $H^L+X$ is the only winning move for Left in $H+X$. Since $G\equiv_\mathcal{E} H$, there must also be a good left move in $G+X$, say $G^L+X$. We claim that $G^L$ is end-reversible. If not, then by Case 1, there is a corresponding non-reversible option $H^{L'}$ in $H$ such that $G^L\cong H^{L'}$. Then $H^{L'}+X$ is also a good move in $H+X$, a contradiction.

So, $G^L$ is end-reversible in $G$, and since $G$ is reduced this means $G^L=\left\{ \;  \mid M(n')\right\}$. To see $G^L\equiv_\mathcal{E} H^L \cong \left\{ \;  \mid M(n)\right\}$, we need only show $n=n'$: but this follows from Theorem~\ref{thm:substitutee}, because $n, n'$ are minimal such that $G\geq_\mathcal{E} M(n),M(n')$. Thus, $H^L\cong G^L$.
\\

 {\em Case 2b:} $|\HL|=1$. Recall $H^L = \left\{\; \mid M(n)\right\}$.

We need to show that $G$ has a Left option $G^L\cong H^L$; we claim that it suffices to show  $\GL\not =\emptyset$. To see that this is sufficient, first note that any Left options of $G$ must be end-reversible, since otherwise the pairwise correspondence from Case 1 would mean $H$ has a non-reversible Left option, contradicting our assumptions for Case 2b (there is only one Left option of $H$, and it is end-reversible).
Since $G$ is reduced, it has at most one end-reversible Left option, as the rest would be removed by Theorem \ref{thm:atomice}, and so if $\GL\not =\emptyset$ then in fact $|\GL|=1$.  By Theorem \ref{thm:substitutee}, this one end-reversible Left option must be $G^L=\left\{\; \mid M(n')\right\}$, and then $n=n'$ follows as above, and we get $G^L\cong H^L$, as required.

So suppose by way of contradiction that $\GL=\emptyset$. 
Since $H=\left\{ \left\{ \;  \mid M(n)\right\}\mid \HR\right\}$ cannot be reduced further, 
Theorem \ref{thm:atomice} (1) cannot be applied, and it follows that $\left\{ \; \mid \HR\right\}\not\in \mathcal{E}$. Thus, there must exist some Right option $H^R$ of $H$ that is  not a Left-end. If this $H^R$ is not end-reversible then by Case 1 there is a corresponding Right option $G^R\cong H^R$ in $G$; but this is a contradiction because $G\in \mathcal{E}$ would be a Left-end with a follower that is not a Left-end. So $H^R$ is end-reversible. If $H^R$ is the only Right option then $H$ is of the form in Theorem \ref{thm:atomice} (2), which contradicts the fact that $H$ is reduced. So there are other Right options, say $H^{R_2},H^{R_3},\ldots$, and then we know two things: first, $H^R$ must be fundamental, else it would have been removed, and second, these other Right options  cannot be end-reversible, else as non-fundamental,  {\em they} would have been removed. By case 1, this means there are corresponding non-reversible options in $\GR$,    $G^{R_2}\cong H^{R_2}, G^{R_3} \cong H^{R_3},\ldots$. Since $H^R$ is fundamental, there is a Right end $Y$ such that $H^R+Y$ is the only good right move in $H+Y$. In particular, none of $H^{R_2}+Y,H^{R_3}+Y,\ldots$ is a good move in $H+Y$. Since $G\equiv_\mathcal{E} H$, there must be a winning move in $G+Y$, say $G^R+Y$. This $G^R$ must be end-reversible, else by Case 1 it would be identical to one of the $H^R_i$. But by Theorem \ref{thm:substitutee}, since $G$ is reduced, $G^R$ must be $\left\{\Conj{M(m)}\mid \;\right\}$, and this is a contradiction because all options of the Left-end $G\in \mathcal{E}$ must be Left-ends.

This shows that $\GL=\emptyset$ is impossible, and so then by the argument above we get $\GL=\left\{G^L\right\}$ with $G^L\cong H^L$.
\end{description}
In all cases, we have shown that $\HL$ is identical to $\GL$. The proof for $\HR$ and $\GR$ is similar. Consequently, $G\cong H$.
\end{proof}

We have shown that two equivalent reduced games must have identical game trees; this gives the following immediate corollary. 

\begin{corollary}[Uniqueness]
If $G\in \mathcal{E}$ then there is a unique reduced form of~$G$ modulo $\mathcal{E}$.
\end{corollary}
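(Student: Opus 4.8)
The plan is to prove the Corollary directly from Theorem~\ref{thm:uniqred} together with the (implicitly established) fact that every game in $\mathcal{E}$ reduces to \emph{some} reduced form. Existence of a reduced form follows from the five reduction theorems: starting from $G$, repeatedly apply Theorems~\ref{thm:domination}, \ref{thm:nonatomic}, \ref{thm:atomice}, and \ref{thm:substitutee} whenever one of them produces an equivalent game in $\mathcal{E}$ with a different option set. First I would argue this process terminates: each of the reductions either strictly decreases the number of options at some follower, or (in the Substitution Theorem case) replaces an end-reversible option by $\left\{\; \mid M_n\right\}$ whose rank is no larger, and one checks that some appropriate complexity measure — say, the multiset of ranks of followers together with option counts, ordered lexicographically — strictly decreases, so no infinite chain of reductions is possible. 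Hence every $G\in\mathcal{E}$ has at least one reduced form $G'$ with $G'\equiv_\mathcal{E} G$.

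Next I would invoke uniqueness. Suppose $G'_1$ and $G'_2$ are both reduced forms of $G$. Then $G'_1\equiv_\mathcal{E} G\equiv_\mathcal{E} G'_2$, so $G'_1\equiv_\mathcal{E} G'_2$, and both are reduced; Theorem~\ref{thm:uniqred} gives $G'_1\cong G'_2$, i.e. they are literally the same game tree. Therefore the reduced form of $G$ is unique up to identity, which is exactly the statement of the Corollary.

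The main obstacle is the termination argument for the reduction process, since the statement of the Corollary tacitly requires that a reduced form exists at all, and the Substitution Theorem does not obviously decrease ``size'' in the naive sense (it can replace a shallow option $A$ by a potentially taller $\left\{\; \mid M_n\right\}$ only when $n$ is bounded by the rank of the reversing end $E$, so one must be careful that the rank does not grow). I would handle this by choosing the complexity measure on games so that: removing an option strictly decreases it; reversing an open-reversible option decreases it (the replacement set sits strictly below the removed option's parent); and the substitution decreases it because $\rank\left\{\; \mid M_n\right\}=n+1\leq \rank\left\{\; \mid E\right\}=\rank E$, with the reversing option $E$ being an option-of-an-option of $G$, so the substituted subtree is no deeper than what it replaces and the option $A$ itself is gone. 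With the measure set up this way, well-foundedness of the order forces termination, and then the argument above closes the proof.
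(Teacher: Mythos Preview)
Your proposal is correct and its core matches the paper exactly: the paper states this as an immediate corollary of Theorem~\ref{thm:uniqred}, with uniqueness following precisely because any two reduced forms of $G$ are $\equiv_\mathcal{E}$ and hence identical by that theorem. You additionally sketch a termination argument for existence that the paper leaves implicit; one small slip to fix there is that $\rank\{\,\mid E\}=\rank(E)+1$, not $\rank(E)$, though the intended bound $n+1\le\rank(A)$ still goes through since $E\in A^{\mathcal R}$.
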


We can further say that the reduced form of a game in $\mathcal{E}$ is the simplest equivalent form, modulo $\mathcal{E}$.

\begin{theorem}[Simplicity]
Let $G\in \mathcal{E}$ be a reduced form of a game. If $G'\equiv_\mathcal{E}G$, then $\rank(G')\geq \rank(G)$.
\end{theorem}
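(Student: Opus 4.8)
The plan is to argue by contradiction using the uniqueness theorem together with a counting argument on reductions. Suppose $G'$ is equivalent to the reduced form $G$ but has strictly smaller rank. The idea is that $G'$ can itself be reduced: since all five reduction operations (Theorems~\ref{thm:domination}, \ref{thm:nonatomic}, \ref{thm:atomice}, \ref{thm:substitutee}) either remove an option, replace an option by a strictly simpler one, or replace an option by something of no larger rank, applying reductions repeatedly to $G'$ terminates in some reduced game $G''\equiv_\mathcal{E}G'\equiv_\mathcal{E}G$. By Theorem~\ref{thm:uniqred}, $G''\cong G$, and in particular $\rank(G'')=\rank(G)$.

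The key point to check is that no reduction step \emph{increases} the rank, so that $\rank(G')\geq\rank(G'')=\rank(G)$. First I would go through the reductions one at a time. Removing a dominated option (Theorem~\ref{thm:domination}) deletes a subtree, so the rank cannot increase. Reversing an open-reversible option $A$ through $B$ (Theorem~\ref{thm:nonatomic}) replaces $A$ by the set $B^\mathcal{L}$; since $B\in A^{\mathcal{R}}$, each element of $B^{\mathcal{L}}$ has rank at most $\rank(B)\leq\rank(A)-1<\rank(A)$, so the new options are strictly shallower than the one removed and the rank does not go up. Removing a non-fundamental end-reversible option (Theorem~\ref{thm:atomice}~(1)) and the simultaneous removal in part~(2) only delete subtrees. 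Finally, the Substitution Theorem for $\mathcal{E}$ (Theorem~\ref{thm:substitutee}) replaces the end-reversible option $A$, which is reversible through some Left-end $E$ with $G\geq_\mathcal{E}E$, by $\left\{\;\mid M_n\right\}$ where $n$ is minimal with $G\geq_\mathcal{E}M_n$; as noted in the paper, $\rank\left(\left\{\;\mid M_n\right\}\right)\leq\rank\left(\left\{\;\mid E\right\}\right)\leq\rank(A)$ because $E\geq_\mathcal{E}M_k\geq_\mathcal{E}M_n$ forces $k\geq n$ (a larger perfect murder dominates a smaller one, except at $\bs 0$), so again the rank does not increase.

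One subtlety I would be careful about is termination: I must make sure the reduction process on $G'$ actually halts. This follows because each reduction strictly decreases a suitable complexity measure — for instance the total number of edges in the game tree, noting that the Substitution Theorem, even when it does not decrease rank, decreases the number of nodes (replacing a potentially large $A$ by $\left\{\;\mid M_n\right\}$), and the other reductions either delete subtrees or replace an option by strictly shallower ones. So one can well-order the process and conclude it reaches a reduced game $G''$ in finitely many steps, each non-increasing in rank.

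The main obstacle I anticipate is handling the Substitution Theorem case cleanly, because there the replaced option is \emph{not} always strictly smaller in rank — when $n$ equals $\rank(E)$ the ranks of $\left\{\;\mid M_n\right\}$ and $\left\{\;\mid E\right\}$ coincide — so the ``non-increasing rank'' claim for that step must be argued via the $M_n\leq_\mathcal{E}E$ comparison and the observation that $\left\{\;\mid M_n\right\}$ can never be deeper than the option it replaces. Once all five reductions are confirmed rank-non-increasing and the process is shown to terminate, the conclusion $\rank(G')\geq\rank(G)$ is immediate from $G''\cong G$.
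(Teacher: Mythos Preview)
Your approach is essentially the paper's: reduce $G'$ to a reduced $G''$, invoke Theorem~\ref{thm:uniqred} to get $G''\cong G$, and observe that each reduction is rank-non-increasing. The paper states this in three lines without justifying the rank claim or termination, so your expanded treatment is reasonable.

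One slip to fix: in the Substitution case you write that ``$E\geq_\mathcal{E}M_k\geq_\mathcal{E}M_n$ forces $k\geq n$,'' but by Lemma~\ref{lem:pm} the perfect murders are \emph{decreasing}, so $M_k\geq_\mathcal{E}M_n$ would give $k\leq n$, not $k\geq n$. The correct (and simpler) argument is the one already implicit in Theorem~\ref{thm:substitutee}: since $G\geq_\mathcal{E}E\geq_\mathcal{E}M_k$ with $k=\rank(E)$, we have $G\geq_\mathcal{E}M_k$, and $n$ is by definition the \emph{minimum} index with $G\geq_\mathcal{E}M_n$, hence $n\leq k$; this yields $\rank(\{\;\mid M_n\})=n+1\leq k+1\leq\rank(A)$ as you want. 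Your node-count termination argument is also shakier than it looks (for $n\geq 2$ the tree $\{\;\mid M_n\}$ can have more nodes than a minimal $A$), but termination is easily rescued by noting that once an end-reversible option has been replaced by $\{\;\mid M_n\}$ the Substitution Theorem produces no further change there, so only finitely many substitutions are ever performed.
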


\begin{proof}
If $G'$ is also reduced, then $G' \cong G$ and so clearly the ranks are equal. Otherwise, reduce $G'$ to $G''$; 
then $G''\equiv_\mathcal{E} G' \equiv_\mathcal{E} G$ implies $G''\cong G$ by Theorem \ref{thm:uniqred}. Since all reductions either maintain or reduce the rank of $G''$, this means $\rank(G')\geq \rank(G'')=\rank(G)$.
\end{proof}

These results allow us to talk about
\textit{the canonical form} of a game in the dead-ending universe, $\mathcal{E}$. Analogous results were shown for $\mathcal{D}$ in \cite{DorbecRSS}. With this, we have completed our first major goal of the paper. In the next and final section, we use  canonical  forms to prove that both $\mathcal{D}$ and $\mathcal{E}$ have no non-conjugal inverses.

%%%%%%%%%%%%%%%%%%%%%%%%%%%%%%%%%%%%%
\section{Conjugates and inverses}\label{sec:conjugate}
%%%%%%%%%%%%%%%%%%%%%%%%%%%%%%%%%%%%%%

It is possible in general for a position to have an inverse that is not its conjugate. We call this a {\em non-conjugal} inverse, and we are very interested in knowing when this is or is not possible; certainly it seems that well-structured universes should not allow such a thing. We end the paper with a proof that no non-conjugal inverses are possible modulo $\mathcal{D}$ or $\mathcal{E}$. We will find good use of the defined canonical forms from the previous section.

\begin{definition} A game $G$ in a universe $\mathcal{\mathcal{U}}$ has a {\em non-conjugal inverse} if there exists an $H\in \mathcal{U}$  with  $H\not \equiv_\mathcal{U} \Conj{G}$ such that $G+H\equiv_\mathcal{U} 0$.
\end{definition}

\begin{definition} A universe $\mathcal{U}$ has  \textit{the conjugate property} if no game in $\U$ has a non-conjugal inverse; that is, 
if $G+H\equiv_\mathcal{U} 0$ implies $H\equiv_\U \Conj{G}$, for all $G,H\in \mathcal{U}$. 
\end{definition}

And now the theorems: first a full proof for $\mathcal{E}$ and then a mostly analogous proof for $\mathcal{D}$. The proof (specifically of claim(ii) in Case 1) uses a technique originally employed by Ettinger in his work on dicot scoring games  \cite{Ettinger}.

\begin{theorem}\label{Conjugate} The universe $\mathcal{E}$ of dead-ending games has the conjugate property; that is, for all 
$G,H\in \mathcal{E}$, if $G+H\equiv_\mathcal{E} 0$ then $H\equiv_\mathcal{E} \Conj{G}$.
\end{theorem}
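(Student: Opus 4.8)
The plan is to prove the conjugate property by induction on the sum of ranks of $G$ and $H$, using the canonical-form machinery from Section~\ref{sec:uniqueness}. Since equivalence in $\mathcal{E}$ is respected by reduction, it suffices to assume $G$ and $H$ are both reduced, and then show that $H \cong \Conj{G}$ (which will follow once I produce a reduced game equivalent to both $H$ and $\Conj{G}$ and invoke Theorem~\ref{thm:uniqred}; note $\Conj{G}$ is reduced whenever $G$ is, since conjugation commutes with all the reductions). The base case is $G + H \equiv_\mathcal{E} \bs 0$ with $G = H = \bs 0$, which is immediate. For the inductive step, I want to set up a correspondence between the options of $H$ and the conjugated options of $G$.

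First I would establish, from $G + H \equiv_\mathcal{E} \bs 0$ and hence $G + H \geq_\mathcal{E} \bs 0$, a Common-Normal-Part–style analysis (Theorem~\ref{thm:comparisone}, applied to $G+H$ versus $\bs 0$, together with its symmetric counterpart $\bs 0 \geq_\mathcal{E} G + H$). The idea, following Ettinger's technique, is: for each Left option $H^L$ of $H$, look at the move $G + H^L$ in $G + H$; since $\bs 0 \geq_\mathcal{E} G + H$ has no Left options to respond with, Theorem~\ref{thm:comparisone}(2) forces a reversing Right option, i.e. some $G^R$ or some $H^{LR}$ with $(G+H^L)^{R L'} \geq_\mathcal{E}$ the relevant term, ultimately yielding an inequality of the shape $G^R + H^L \leq_\mathcal{E} \bs 0$ or $G + H^{LR} \leq_\mathcal{E} \bs 0$, hence (using Lemma~\ref{lem:dis} and the invertibility of $G$, which we have since $G+H\equiv_\mathcal{E}\bs 0$) $H^L \leq_\mathcal{E} \Conj{G^R}$ or $H^{LR} \leq_\mathcal{E} \Conj{G}$. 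The second alternative says $H^L$ is reversible in $H$ (since $\Conj{G} \equiv_\mathcal{E} H$ when we're trying to prove the statement — but we can't assume that yet; instead $H^{LR} \leq_\mathcal{E} \Conj G$ combined with the to-be-established $\Conj G \leq_\mathcal E H$ would give reversibility). This is the delicate point and I'd handle it exactly as in the proof of Theorem~\ref{thm:uniqred} Case~1/Case~2: split on whether $H^L$ is end-reversible, and use that in a reduced game end-reversible Left options have the canonical shape $\left\{\; \mid M(n)\right\}$, with $H \geq_\mathcal{E} M(n)$, so invertibility of $\left\{\; \mid M(n)\right\}$ (Theorem~\ref{thm:endinverses}) lets me rewrite inequalities cleanly and rule out the "reversible" alternative whenever $H$ is reduced and $H^L$ is not end-reversible.

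Carrying this out symmetrically — for Left options of $G$ against $\Conj H$, for Right options of $G$ and of $H$ — I obtain, for each non-end-reversible Left option $H^L$ of $H$, a Left option $G^R$ of $G$ with $H^L \equiv_\mathcal{E} \Conj{G^R}$, and vice versa; domination-freeness of the reduced games $H$ and $\Conj G$ promotes these one-sided inequalities to equivalences, and induction (rank of $H^L$ plus rank of $G^R$ is strictly smaller) gives $H^L \cong \Conj{G^R}$. The end-reversible options are matched by the perfect-murder bookkeeping: a reduced game has at most one end-reversible Left option, of the form $\left\{\;\mid M(n)\right\}$ with $n$ minimal such that $H\geq_\mathcal{E} M(n)$; since $G+H\equiv_\mathcal E\bs 0$ forces $\Conj G$ and $H$ to have the same such minimal $n$ (via $\Conj G \equiv_\mathcal E H$ on the parts we've already matched, or directly via $-M(n)$-invertibility), the lone end-reversible options also correspond. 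Assembling all four option-correspondences shows $H$ and $\Conj G$ have identical option sets up to the matching, hence $H \cong \Conj G$, hence $H \equiv_\mathcal{E} \Conj G$.

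The main obstacle I expect is precisely the reversibility alternative in the Common-Normal-Part dichotomy: when the analysis of $H^L$ returns "$H^{LR} \leq_\mathcal{E} \Conj G$" rather than a clean pairing, I must argue this cannot happen for a reduced $H$ — and the argument is genuinely circular-looking because it wants to use $\Conj G \equiv_\mathcal{E} H$, the very thing being proved. The resolution, as in Theorem~\ref{thm:uniqred}, is to run the argument structurally: use only the canonical shape of reduced end-reversible options plus the invertibility of ends (Theorem~\ref{thm:endinverses}) and Lemma~\ref{lem:dis} to convert $H^{LR}\leq_\mathcal E \Conj G$ together with $G+H\equiv_\mathcal E \bs 0$ into $H \geq_\mathcal{E} H^{LR}$ directly, contradicting that the reduced $H$ has no (non-end) reversible options; the end-reversible subcase is absorbed into the perfect-murder uniqueness. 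Keeping this structural — never invoking the conclusion — while threading through the $|\HL| = 1$ versus $|\HL|>1$ and the "$\left\{A\mid C\right\}$ not in $\mathcal E$" edge cases exactly as in the uniqueness proof is the part requiring the most care; the arithmetic of the $M(n)$'s is routine given Theorems~\ref{thm:pm}, \ref{thm:substitutee}, and \ref{thm:endinverses}.
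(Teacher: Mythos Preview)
Your plan is essentially the paper's: induction on rank with $G,H$ in canonical form, pair the non-end-reversible options via Ettinger's cycling, and match the end-reversible ones via the perfect-murder substitution. Two points will remove the fog from your sketch.

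First, the circularity you worry about is self-inflicted by routing through $\Conj{G}$. You never need $\Conj{G}$ mid-argument. From $G + H^{LR} \leq_\mathcal{E} 0 \equiv_\mathcal{E} G + H$ and the invertibility of $G$ (its inverse is $H$, by hypothesis), Lemma~\ref{lem:dis} gives $H^{LR} \leq_\mathcal{E} H$ directly; so $H^L$ is reversible in $H$, contradicting reducedness whenever $H^L$ is not end-reversible. The paper does exactly this (with the roles of $G$ and $H$ swapped): it works with the given inverse throughout and only writes $\Conj{G}$ at the very end.

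Second, the phrase ``domination-freeness promotes these one-sided inequalities to equivalences'' undersells the actual content, which is the paper's Claim~(ii). From $G^{L_1} + H^{R_1} \leq_\mathcal{E} 0$ you cannot conclude equality by domination alone, and you cannot cancel since $G^{L_1}$ is not yet known to be invertible. The argument is: if the inequality is strict, look at Right's move $G + H^{R_1}$; since $G+H\geq_\mathcal{E} 0$ there is a Left response $\geq_\mathcal{E} 0$, and after ruling out the reversibility branch as above this yields some $G^{L_2}+H^{R_1}\geq_\mathcal{E} 0$. Equality here would give, by induction, $H^{R_1}\equiv_\mathcal{E}\Conj{G^{L_2}}$, whence $G^{L_1}<_\mathcal{E}G^{L_2}$, a dominated option in the reduced $G$ --- contradiction. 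So the inequality is again strict, and iterating produces a closed cycle of strict inequalities among finitely many options whose sum is simultaneously $\leq_\mathcal{E}0$ and $\not\leq_\mathcal{E}0$. It is this cycling-plus-finiteness, with domination-freeness used \emph{inside} the cycle, that forces $G^{L}+H^{R}\equiv_\mathcal{E}0$ and licenses the inductive step. Make sure your write-up contains this explicitly; it is the heart of the proof.
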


\begin{proof}
Let $G,H\in \mathcal{E}$ satisfy $G+H\equiv_\mathcal{E} 0$, and consider $G,H$ in their (unique) reduced, canonical forms.
We will prove, by induction on the rank of $G+H$, that we must have $H\equiv_\mathcal{E} \Conj{G}$. 
In the below proof, because of numerous algebraic manipulations, we will revert to the short hand notation
$-G = \Conj{G}$, if the existence of a negative is given by induction.

For the base case, if $G$ and $H$ both reduce to $0$, then trivially $H\equiv_\mathcal{E} \Conj{G}$.
Otherwise, the game $G+H$ has at least one option; without loss of generality, assume $G$ has at least one Left option, $G^L$.
 We break the proof into two cases based on whether or not $G^L$ is end-reversible.
\\

\noindent {\bf Case 1:}  $G^L$ is not end-reversible.

Note then that $G^L$ is not reversible at all, because only end-reversible options may exist in the reduced form. We prove two claims.\\

\noindent \textit{Claim (i):} There exists $H^{R}$ such that $G^{L}+H^{R}\leq_\mathcal{E} 0$.

Let $J=G+H \equiv_\mathcal{E} 0$. 
It is a consequence of Theorem~\ref{thm:comparisone} that for all Left moves $J^L$,
 there exists $J^{LR}$ such that $J^{LR}\leq_\mathcal{E}  0$.
In particular, for the Left option $G^L+H$, there exists either $G^{LR}+H\leq_\mathcal{E}  0$ or  $G^{L}+H^R\leq_\mathcal{E}  0$.
The former inequality is impossible, since adding $G$ to both sides
  (Lemma~\ref{lem:dis}) gives $G^{LR}\leq_\mathcal{E} G$, which contradicts that
 $G^L$ is not a reversible option. 
  Therefore, the claim holds.
  \\

\noindent \textit{Claim (ii):} With $H^R$ as in (i), $G^L+H^R\equiv_\mathcal{E} 0$.

  We have that $G^{L}+H^{R}\leq_\mathcal{E} 0$, and so suppose by way of contradiction that the inequality is strict.
Let us index the options starting with $G^L=G^{L_1}$ and $H^R=H^{R_1}$, so that our assumption is $G^{L_1}+H^{R_1}<_\mathcal{E} 0$.

 Consider the Right move in $G+H$ to $G+H^{R_1}$. Since $G+H\equiv_\mathcal{E} 0$, in particular $G+H\geq_\mathcal{E} 0$, and so there exists a Left option such that $(G+H^{R_1})^L\geq_\mathcal{E} 0$. This option is either of the form $G+H^{{R_1}L}\geq_\mathcal{E} 0$ or $G^{L_2}+H^{R_1}\geq_\mathcal{E} 0$ for some option $G^{L_2}$ that cannot be $G^{L_1}$, since 
 $G^{L_1}+H^{R_1}<_\mathcal{E} 0$.

 If we have the first inequality, $G+H^{{R_1}L}\geq_\mathcal{E} 0$, then, by adding $H$ to both sides,
 we get $H^{{R_1}L}\geq_\mathcal{E} H$. Therefore, since $H$ is in canonical form,
 $H^{R_1}$ is an end-reversible option and, by Theorem~\ref{thm:substitutee},
 $H^{R_1} = \left\{-{M(n)}\mid \; \right\}$  for some nonnegative integer $n$.
 The two inequalities $G^{L_1}+H^{R_1}\leq_\mathcal{E} 0$ and $G+H^{{R_1}L}\geq_\mathcal{E} 0$
 become $G^{L_1}+\left\{-{M(n)}\mid \; \right\}\leq_\mathcal{E} 0$ and $G-{M(n)}\geq_\mathcal{E} 0$, respectively.
 In $G^{L_1}+\left\{-{M(n)}\mid \; \right\}$, by Theorem~\ref{thm:comparisone}, if Left moves to $G^{L_1}-{M(n)}$,
 then Right has a response of the form $G^{{L_1}R}-{M(n)}\leq_\mathcal{E} 0$. Then 
 $G^{{L_1}R}\leq_\mathcal{E} {M(n)}$ and
 $G \geq_\mathcal{E} {M(n)}$ leads to $G^{{L_1}R}\leq_\mathcal{E} G$ (Lemma~\ref{lem:dis}),
  which is a contradiction because $G^{L_1}$ is not reversible.

If we have the second inequality, $G^{L_2}+H^{R_1}\geq_\mathcal{E}  0$, then we can assume  $G^{L_2}$ is not end-reversible; if it was end-reversible, then as above, $H^{R_1}$ would be end-reversible and so $G^{L_1}$ would also be end-reversible, contradicting our assumption.
By induction, 
$G^{L_2}+H^{R_1}\equiv_\mathcal{E}  0$ implies that  $H^{R_1}=-G^{L_2}$, since $G$ and $H$ are in reduced form.
But now we have $0\succ_\mathcal{E}  G^{L_1}+H^{R_1} =  G^{L_1}-G^{L_2}$, or $G^{L_2}\succ_\mathcal{E}  G^{L_1}$, which is a contradiction because $\GL$ should have no dominated options.
Therefore, $G^{L_2}+H^{R_1}\succ_\mathcal{E}  0$.
By Claim (i) and the above argument, there must exist a Right option $H^{R_2}$ in $H$ and a Left option $G^{L_3}$ in $G$, such that  
$G^{L_2}+H^{R_2}\leq_\mathcal{E}  0$ and $G^{L_3}+H^{R_2}\succ_\mathcal{E}  0$, and from there a $H^{R_3}$ and $G^{L_4}$, and so on. We get the following chain of inequalities:
	\[
      \begin{array}{ccc}
      G^{L_1}+H^{R_1}\leq_\mathcal{E}  0, &\quad& G^{L_2}+H^{R_1}\succ_\mathcal{E}  0\\
      G^{L_2}+H^{R_2}\leq_\mathcal{E}  0, &\quad&  G^{L_3}+H^{R_2}\succ_\mathcal{E}  0\\
       \vdots &&\vdots
      \end{array}
      \]
But the number of Left options of $G$  is finite; at some point, we will get an inequality like  $G^{L_{1}}+H^{R_m}\succ_\mathcal{E}  0$  (re-indexing if necessary). 

Because the inequalities are strict, summing the left-hand and the right-hand inequalities gives, respectively,
\[ \sum_{i=1}^m G^{L_i}+\sum_{i=1}^m H^{R_i}\leq_\mathcal{E}  0\quad\mbox{ and } \quad
 \sum_{i=1}^m G^{L_i}+\sum_{i=1}^m H^{R_i}\succ_\mathcal{E}  0\]
 which is a contradiction.

 Therefore, we conclude that $G^{L}+H^{R}\equiv_\mathcal{E}  0$, which proves the claim, and then by induction
 $H^R\equiv_\mathcal{E} -G^L$, which solves this case.\\

\noindent {\bf Case 2:}  $G^L$ is end-reversible; that is,  $G^L=\left\{ \; \mid M(n)\right\}$.

Note that any other options of $G$ would be non-reversible. 
We will argue that $G^L$ must be paired with a symmetric end-reversible option in $\HR$. If not, by Case 1, $G+H$ would be
$$\left\{\left\{ \; \mid M(n)\right\},G^{L_2},G^{L_3},\ldots\mid\ldots\right\}+\left\{\ldots\mid-G^{L_2},-G^{L_3},\ldots\right\}.$$

Since $G^L$ is end-reversible, we know $G\geq_\mathcal{E} M(n)$. Adding $H$ to both sides and using $G+H\equiv_\mathcal{E} 0$, we get 
$0\geq_\mathcal{E} H+M(n)$. 

Let us first assume that $\GL$ has more than one option, so the end-reversible option $G^L$ must be fundamental. This means there is a Left-end $X$ such that $\left\{ \; \mid M(n)\right\}+X$ is the only winning move for left in $G+X$. As ends, both $M(n)$ and $X$ are invertible, and so we can add $\Conj{M(n)}+\Conj{X}$ to both sides of $0\geq_\mathcal{E} H+M(n)$ to see $\Conj{M(n)}+\Conj{X} \geq_\mathcal{E} H+\Conj{X}$. Now, Right wins first on $\Conj{M(n)}+\Conj{X}$ (Right end), so Right must have a good first move on $H+\Conj{X}$. But then Left would have a good first move on $\Conj{H}+X$, which contradicts that $G^L$ is fundamental in $G$. Thus, $H$ must have an end-reversible Right option after all, say $\left\{ \; \mid M(n')\right\}$, and since $G$ and $H$ are reduced it must be that $n=n'$.

All that remains is the case where $G^L=\left\{ \; \mid M(n)\right\}\in\GL$ is the only Left option. If so, if $\HR$ is not empty, the proof follows as above. If $\HR= \emptyset $, by Theorem \ref{thm:endinverses}, $\GL$ should be empty, andistence of $G^L$.\\

We have seen that each $G^L$ has a corresponding $-G^L$ in the set of Right options of $H$. This finishes the proof.
\end{proof}

We have the same result for dicot games.

\begin{theorem}\label{thm:ConjugateD} The universe $\mathcal{D}$ of dicot games has the conjugate property; that is, for all 
$G,H\in \mathcal{D}$, if $G+H\equiv_\mathcal{D} 0$ then $H\equiv_\mathcal{D} \Conj{G}$.
\end{theorem}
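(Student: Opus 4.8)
The plan is to mimic the proof of Theorem~\ref{Conjugate} for $\mathcal{E}$, but in the simpler setting of $\mathcal{D}$ where the only end is $\bs 0$. I would let $G,H\in\mathcal{D}$ with $G+H\equiv_\mathcal{D}0$, put both in their unique reduced canonical forms (from the dicot analogue of Section~\ref{sec:uniqueness}), and induct on $\rank(G+H)$. The base case is $G\cong H\cong\bs 0$, which is immediate. Otherwise some component, say $G$, has a Left option $G^L$, and I would split into two cases according to whether $G^L$ is end-reversible, i.e.\ reversible through $\bs 0$ (the only Left-end in $\mathcal{D}$).

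In \textbf{Case 1}, $G^L$ is not end-reversible, hence (since $G$ is reduced) not reversible at all. I would run exactly the argument of Claims (i) and (ii) from Theorem~\ref{Conjugate}: applying Theorem~\ref{thm:comparisond} to the Left option $G^L+H$ of $G+H\equiv_\mathcal{D}0$ yields either $G^{LR}+H\leq_\mathcal{D}0$ (impossible by Lemma~\ref{lem:dis}, since it would give $G^{LR}\leq_\mathcal{D}G$, contradicting non-reversibility) or $G^L+H^R\leq_\mathcal{D}0$ for some $H^R$; this is Claim~(i). For Claim~(ii) I assume the inequality strict, index $G^{L_1}=G^L$, $H^{R_1}=H^R$, and build the same alternating chain $G^{L_i}+H^{R_i}\leq_\mathcal{D}0$, $G^{L_{i+1}}+H^{R_i}\succ_\mathcal{D}0$. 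The sub-step ``$G+H^{R_1 L}\geq_\mathcal{D}0$ forces $H^{R_1}$ reversible'' is even cleaner here since the only substitution target is $*=\left\{\bs 0\mid\bs 0\right\}$ rather than $\left\{-M(n)\mid\;\right\}$; and ``$G^{L_2}$ end-reversible $\Rightarrow$ contradiction'' uses the pairing with $*$ together with Theorem~\ref{thm:atomicd}. Because $G$ has finitely many Left options the chain wraps around, and summing the strict inequalities on both sides yields a contradiction, so $G^L+H^R\equiv_\mathcal{D}0$ and induction gives $H^R\equiv_\mathcal{D}\Conj{G^L}$.

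In \textbf{Case 2}, $G^L$ is end-reversible, so by the reduced-form description $G^L\cong *$ and $G\geq_\mathcal{D}\bs 0$ (equivalently $o_L(G)=\rm L$, Left wins $G$ moving first). Adding $H$ and using $G+H\equiv_\mathcal{D}0$ gives $\bs 0\geq_\mathcal{D}H$, i.e.\ $o(H)\in\{\mathscr P,\mathscr R\}$ and in particular $o_R(\Conj{H})\in\{\rm R\}$ interpreted dually. I would argue $G^L$ must be matched by an end-reversible Right option of $H$: by Theorem~\ref{thm:atomicd} applied to $G$, either $G$ has another winning Left move (so $*$ is redundant and $G$ would not be reduced) or $*$ is the unique winning Left move of $G$; in the latter, non-existence of an end-reversible $H^R\cong *$ would let Case~1 pair every non-reversible Left option of $G$ with its conjugate in $\HR$, and then $\bs 0\geq_\mathcal{D}H$ combined with the dicot identity $*+*\equiv_\mathcal{D}\bs 0$ forces $H$ to supply a good Right response contradicting uniqueness of the winning move — exactly as in the $\mathcal{E}$ proof but with $M(n)$ replaced by $\bs 0$ throughout and $\left\{\;\mid M(n)\right\}$ by $*$. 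The lone-option subcase ($\GL=\{*\}$ only) is handled by Theorem~\ref{thm:substituted}: if $\HR=\emptyset$ then $G$ would reduce to $\bs 0$ contrary to assumption. Hence each $G^L$ corresponds to $\Conj{G^L}\in\HR$, symmetrically each $G^R$ to $\Conj{G^R}\in\HL$, and by induction the matched options are identical, giving $H\cong\Conj{G}$, so in particular $H\equiv_\mathcal{D}\Conj{G}$.

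The main obstacle is Case~2, specifically verifying that an end-reversible $G^L$ cannot be ``absorbed'' on the $H$-side by ordinary non-reversible options — the place where the $\mathcal{E}$-proof invokes the fundamental-option machinery and invertibility of ends. In $\mathcal{D}$ this is lighter (ends are trivial, and $*+*\equiv_\mathcal{D}\bs 0$ is known, Theorem~\ref{thm:substituted}), but one must still be careful that removing or substituting options keeps everything inside $\mathcal{D}$, which it does since every reduced-form operation for $\mathcal{D}$ preserves the dicot property. I expect the rest to be a routine transcription of Theorem~\ref{Conjugate} with the dictionary $M(n)\leftrightarrow\bs 0$, $\left\{\;\mid M(n)\right\}\leftrightarrow *$, Theorem~\ref{thm:substitutee}$\leftrightarrow$Theorem~\ref{thm:substituted}, Theorem~\ref{thm:atomice}$\leftrightarrow$Theorem~\ref{thm:atomicd}, Theorem~\ref{thm:comparisone}$\leftrightarrow$Theorem~\ref{thm:comparisond}, and Theorem~\ref{thm:endinverses} replaced by the trivial fact that $\bs 0$ is self-inverse.
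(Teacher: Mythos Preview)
Your proposal is correct and follows essentially the same approach as the paper: reduce $G,H$ to canonical form, induct on rank, and split according to whether $G^L$ is end-reversible, running Claims~(i)--(ii) in Case~1 and matching the end-reversible $*$ options in Case~2 via Theorem~\ref{thm:atomicd} and $0\geq_\mathcal{D}H$. Two small corrections: in Case~2 you do not need the identity $*+*\equiv_\mathcal{D}\bs 0$ at all (the contradiction comes directly from ``no $G^{L_i}$ is winning $\Rightarrow$ no $-G^{L_i}\in\HR$ is winning for Right $\Rightarrow$ contradicts $0\geq_\mathcal{D}H$''), and in the lone-option subcase the relevant fact is not Theorem~\ref{thm:substituted} but simply that $\HR=\emptyset$ forces $H=\bs 0$ (dicots have no nonzero ends), whence $G\equiv_\mathcal{D}\bs 0$ and $G\cong\bs 0$, a contradiction.
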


\begin{proof}
Let $G,H\in \mathcal{D}$ satisfy $G+H\equiv_\mathcal{D} 0$, and consider $G,H$ in their (unique) reduced, canonical forms.
We will prove, by induction on the rank of $G+H$, that we must have $H\equiv_\mathcal{D} \Conj{G}$. 
As before, if we know the conjugate is an additive inverse, we will write
$-G$ for $\Conj{G}$.
For the base case, if $G$ and $H$ both reduce to $0$, then trivially $H\equiv_\mathcal{E} \Conj{G}$.
Otherwise, the game $G+H\in \mathcal{D}$ has at least one Left option and at least one Right option. Call the Left option $G^L$.
 We break the proof into two cases based on whether or not $G^L$ is end-reversible.
\\

\noindent {\bf Case 1:}  $G^L$ is not end-reversible.

Note then that $G^L$ is not reversible at all, because $G$ is in reduced form. 
Claim (i) from the proof of Theorem \ref{Conjugate} follows as in $\mathcal{E}$, this time by Theorem \ref{thm:comparisond}, and so we know there exists $H^{R}$ such that $G^{L}+H^{R}\leq_\mathcal{D} 0$. Claim (ii) from the previous proof says that in fact $G^L+H^R\equiv_\mathcal{D} 0$. This also follows as in $\mathcal{E}$ (except the only end-reversible option $H^R$ is $*$). Together the claims imply that there is a non-reversible option $H^R$ with $G^L+H^R\equiv_\mathcal{D} 0$, and by induction this means $H^R\equiv_\mathcal{D}-G^L$.
\\

\noindent {\bf Case 2:}  $G^L$ is end-reversible; that is,  $G^L=*=\left\{0\mid 0\right\}$.

Since $*$ is reversible in $G$, we know $G\geq_\mathcal{D} 0$. Adding $H$ to each side gives $G+H\geq_\mathcal{D} H $, and since $G+H\equiv_\mathcal{D}0$, this shows $0\geq_\mathcal{D} H$.

We will show by contradiction that $H$ must have a Right option to $*$. Assume $H$ has no Right option to $*$; by Case 1, if $G= \left\{*,G^{L_2},G^{L_3},\ldots\mid\ldots\right\}$ then $H=\left\{\ldots\mid-G^{L_2},-G^{L_3},\ldots\right\}$. 
By Theorem \ref{thm:atomicd} (1), since $G$ is reduced and $*$ is an end-reversible option of $G$, no $G^{L_i}$ can be a winning left move in $G$. By symmetry, then, $H$ has no winning right move. But this contradicts $0\geq_\mathcal{D} H$, as Right wins first on $0$.
Thus, if $G^L$ is end-reversible, then there is a corresponding end-reversible $H^R$ in $H$, and both are $*$. Since $-*=*$, we have $H^R\equiv_\mathcal{D}-G^L$.
\\

So we have pairwise equivalence of the options of $G$ and $H$, and so $G\equiv_\mathcal{D} H$.
\end{proof}

Thus, we have no non-conjugal inverses modulo dicots or dead-ends, and in fact this tells us the same is true for the dicot subuniverse of impartial games, $\mathcal{I}$. The corollary follows from a result of \cite{Renault}, that for $G,H\in \mathcal{I}$, $G\equiv_\mathcal{D}H$ if and only if $G\equiv_\mathcal{I}H$.

\begin{corollary}
The universe of impartial games has the conjugate property.
\end{corollary}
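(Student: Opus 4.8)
The plan is to deduce the impartial case directly from the dicot case (Theorem~\ref{thm:ConjugateD}), using the fact that $\mathcal{I}$ is a subuniverse of $\mathcal{D}$ together with the result of \cite{Renault} that the relations $\equiv_\mathcal{D}$ and $\equiv_\mathcal{I}$ agree on impartial games. No new game-theoretic work is needed; the proof is a transfer argument.

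First I would record that $\mathcal{I}\subseteq\mathcal{D}$ is itself a universe: every impartial game is a dicot, the class is closed under disjunctive sum, and it is trivially closed under conjugation since $\Conj{G}=G$ for every impartial $G$. In particular, if $G,H\in\mathcal{I}$ then both $G+H\in\mathcal{I}$ and $\Conj{G}\in\mathcal{I}$, so all the games appearing below stay inside $\mathcal{I}$ (and hence inside $\mathcal{D}$).

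Next, suppose $G,H\in\mathcal{I}$ with $G+H\equiv_\mathcal{I}0$. Since $G+H$ and $0$ are impartial, the theorem of \cite{Renault} upgrades this to $G+H\equiv_\mathcal{D}0$. Now Theorem~\ref{thm:ConjugateD} applies and yields $H\equiv_\mathcal{D}\Conj{G}$. Finally, $H$ and $\Conj{G}$ are both impartial, so the theorem of \cite{Renault}, used in the other direction, brings the equivalence back down: $H\equiv_\mathcal{I}\Conj{G}$. This is precisely the statement that $\mathcal{I}$ has the conjugate property.

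There is essentially no obstacle here beyond bookkeeping, but the one point worth stressing is that equivalence in a smaller universe does \emph{not} in general imply equivalence in a larger one, so the step from $G+H\equiv_\mathcal{I}0$ to $G+H\equiv_\mathcal{D}0$ genuinely requires the result of \cite{Renault} and is not a trivial consequence of the inclusion $\mathcal{I}\subseteq\mathcal{D}$. (One may also remark that, since $\Conj{G}=G$ for impartial $G$, the corollary is equivalent to the classical fact that $G+G\equiv_\mathcal{I}0$ for every impartial $G$; the route through $\mathcal{D}$ recovers this with no extra effort.)
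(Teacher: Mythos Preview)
Your proof is correct and follows exactly the approach the paper takes: transfer from $\equiv_\mathcal{I}$ to $\equiv_\mathcal{D}$ via the result of \cite{Renault}, apply Theorem~\ref{thm:ConjugateD}, and transfer back. One small quibble with your closing parenthetical: the conjugate property asserts only that \emph{any} inverse must be the conjugate, not that an inverse exists, so it is implied by---but not equivalent to---the classical fact $G+G\equiv_\mathcal{I}0$, and the route through $\mathcal{D}$ does not by itself recover that existence statement.
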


%-----

\section{Summary}
We have adapted general results from absolute CGT \cite{LarssonNPS} for mis\`ere play, in order to develop subordinate comparison (Section 2) and reversibility through ends (Section 3.1) for dicot and dead-ending games. We have used these tools to show our two main  results: $\mathcal{D}$ and $\mathcal{E}$ have unique reduced forms  (Section 3.2) and have the conjugate property (Section 4). 

Future work in this area would see the results here extended to other universes besides $\mathcal{D}$ and $\mathcal{E}$. That is, can we get a subordinate comparison test in other universes? Can we solve reversibility through ends, and thus find  canonical  forms? Can we classify which universes have the conjugate property? 

It will also be interesting to see our results for the dead-ending universe applied to specific rule sets within $\mathcal{E}$, such as domineering, in order to solve such games under mis\`ere play.

\end{document}